\documentclass[10pt, reqno]{amsart}
\usepackage{amssymb,amsmath,amsthm,latexsym,booktabs,todonotes, color, comment, eucal,uncial}
\usepackage[dvipsnames]{xcolor}
\usepackage[normalem]{ulem}
\usepackage{tikz}
\theoremstyle{definition}
\newtheorem{definition}{Definition}

\newtheorem{problem}{Problem}

\newtheorem{example}[definition]{Example}

\theoremstyle{plain}
\newtheorem{lemma}[definition]{Lemma}
\newtheorem{proposition}[definition]{Proposition}
\newtheorem{theorem}[definition]{Theorem}

\usepackage{amsfonts}
\usepackage{graphicx}
\usepackage{url}
\usepackage{tikz}
\definecolor{brightturquoise}{rgb}{0.03, 0.91, 0.87}
\definecolor{brightpink}{rgb}{1.0, 0.0, 0.5}
\definecolor{brilliantrose}{rgb}{1.0, 0.33, 0.64}
\definecolor{capri}{rgb}{0.0, 0.75, 1.0}
\definecolor{blue-green}{rgb}{0.0, 0.87, 0.87}
\definecolor{bleudefrance}{rgb}{0.19, 0.55, 0.91}

\begin{document}

\title[$C_4$-face-magic labelings on a $4 \times 4$ Klein 
bottle grid graph]{$C_4$-face-magic labeling on a 
$4 \times 4$ Klein \\
bottle grid graph}
\author{Timothy Myers}
\address{Department of Mathematics,
         Howard University,
         Washington, DC 20059
         USA}
\email{timyers@howard.edu}
\author{Stephen J. Curran}
\address{Department of Mathematics,
         University of Pittsburgh at Johnstown,
         450 Schoolhouse Rd,
         Johnstown, PA 15904
         USA}
         \email{sjcurran@pitt.edu}

\keywords{$C_4$-face magic labelings, Klein bottle grid graphs}
\date{\indent 2010 \textit{Mathematics Subject Classification.} 91A46, 05C78}

\begin{abstract}
For a graph $G = (V, E)$ embedded in the Klein bottle, let $\mathcal{F}(G)$ denote the set of faces of $G$.
A \textit{$C_4$-face-magic Klein bottle labeling} on $G$
is a bijection $f: V(G) \to \{1, 2, \dots, |V(G)|\}$
such that for any $F \in \mathcal{F}(G)$ with $F \cong C_4$, 
the sum of all the vertex labelings along $C_4$ is a constant. 
We say that a $C_4$-face-magic labeling
$X=\{x_{i,j} : 1\le i,j\le 4\}$ on the
$4\times 4$ Klein bottle grid graph is
\textit{horizontally (or vertically) pairwise balanced}
if $x_{2i-1,j} + x_{2i,j}=17$ for $1\le i \le 2$
and $1\le j \le 4$
(or $x_{i,2j-1} + x_{i2,j}=17$ for $1\le i \le 4$
and $1\le j \le 2$).
We show that the $4\times 4$ Klein bottle grid graph has
192 $C_4$-face-magic labelings up to symmetries on
a Klein bottle.
We classify these labelings into two categories
depending on whether a $C_4$-face-magic label preserving 
permutation of the labeling is either horizontally
pairwise balanced or vertically pairwise balanced.
These results extend known results on $C_4$-face-magic
labelings on an $m\times n$ Klein bottle grid graph.
\end{abstract}

\maketitle

\section{Introduction}

Kotzig and Rosa \cite{Kotzig} introduced graph labelings in 1970.
Graph labelings have been studied extensively since that time. 
In fact researchers \cite{SaviyoEtAl2023} have found various applications of graph labelings
to radar pulse code designs, communication network 
models, X-ray crystallography, and graph decomposition problems. 
For a thorough review of graph labelings, we direct the reader to Gallian’s dynamic
survey on graph labelings \cite{Gallian}.

In this paper we investigate $C_4$-face-magic labelings on the Klein bottle grid graph.
Let $G=(V,E)$ be a Klein bottle (planar, toroidal, projective) graph, and
let $\mathcal{F}(G)$ denote the set of faces of $G$.
Then, $G$ is called a \textit{$C_n$-face-magic 
Klein bottle, (planar, toroidal, projective)} graph if there
exists a bijection $f: V(G) \to \{1, 2, \dots, |V(G)|\}$ such that for any $F \in \mathcal{F}(G)$ with $F \cong C_n$,
the sum of all the vertex labels around $C_n$ is a constant $S$. We call the constant $S$ a \textit{$C_n$-face-magic value} of $G$.
Let $x_v =f(v)$ for all $v\in V(G)$. We shall call $\big\{x_v : v\in V(G)\big\}$ a $C_n$-face-magic Klein bottle labeling on $G$.
More generally, $C_n$-face-magic planar graph labelings are a special case of $(a, b, c)$-magic labelings
introduced by Lih \cite{Lih}.
For $a,b,c\in\{0,1\}$, an $(a, b, c)$-magic labeling is a bijective assignment of the elements of the set $\{1,2,...,a|V|+b|E|+c|F|\}$ to the 
 vertices, edges, and faces of $G=(V,E,F)$ that assigns $a$ labels to each vertex, $b$ labels to each edge, and $c$ labels to each face 
 such that the sum of labels for each face (including vertices, edges, and the face itself) is constant. 
For assorted values of $a, b$ and $c$, Baca and others \cite{Baca1, Baca2, Baca3, Kasif, Kath, Lih} have analyzed
the problem for various classes of graphs.
Wang \cite{Wang} showed that the toroidal grid graphs $C_m \times C_n$ are antimagic for all integers $m,n\geqslant 3$.
Recall that an antimagic labeling is a bijective assignment of the set $\{1,2,\ldots,|E|\}$ onto the edges
of $G$ such that, for every vertex, the sum of the labels on edges incident to the vertex is unique. 
Butt et al. \cite{Buttetal} investigated face antimagic labelings on toroidal and Klein bottle grid graphs;
they found bijective assignments of the elements from the set $\{1,2,\ldots,|V|+|E|+|F|\}$ onto $V\cup E\cup F$ such that the
induced face labels, formed by the sum of the labels on all vertices and edges incident to the face and the label on the face itself,
produce arithmetic sequences with various common differences.

Curran et al. \cite{CurranLowLocke1} examined $C_4$-face-magic toroidal labelings 
on an $m \times n$ toroidal grid graph. 
They showed that $C_m \times C_n$ admits a $C_4$-face-magic toroidal labeling if and only if either $m = 2$, or $n = 2$, or both $m$ and $n$ are even. 
Curran et al. \cite{CurranLowLocke2}  also 
investigated $C_4$-face-magic Klein bottle labelings 
on an $m \times n$ Klein bottle grid graph.
They showed that an $m \times n$ Klein bottle grid graph 
admits a $C_4$-face-magic Klein bottle labeling 
if and only if $n$ is even. 

Curran \cite{Curran2} showed that an $m \times n$ projective grid graph  admits a $C_4$-face-magic 
projective labeling if and only if either $m$ and $n$ 
have the same parity.
When $m$ and $n$ are odd, the $C_4$-face-magic projective labelings on an $m \times n$ projective grid graph
having $C_4$-face-magic value $2mn+2$ were characterized in \cite{Curran2},
and a category of the $C_4$-face-magic labelings 
having $C_4$-face-magic value $2mn+1$ or $2mn+3$
were determined in \cite{Curran3}.

Curran and Low \cite{CurranLow} showed that there are
3 antipodally balanced $C_4$-face-magic labelings on 
a $4\times 4$ toroidal grid graph up to symmetry on the torus.
Curran and Locke \cite{CurranLocke3} showed that
there are 144 $C_4$-face-magic labelings on 
a $4\times 4$ projective grid graph up to
symmetries on a projective plane.
In this paper we show that there are 192 
 $C_4$-face-magic labelings on 
a $4\times 4$ Klein bottle grid graph up to
symmetries on a Klein bottle.

\section{Preliminaries}

We begin by defining the $m\times n$ Klein bottle grid graph.
\begin{definition}
   The $m\times n$ \it Klein bottle grid graph\rm, denoted $\mathcal{K}_{m,n}$, is the graph whose vertex set is the lattice $V(\mathcal{K}_{m,n})=\big\{(i,j)\  : \ 1\le i\le m\ ,\ 1\le j\le n  \big\}$\ and whose edge set consists of the following edges. 
\begin{enumerate}
    \item[(i)]There is an edge from $(i,j)$ to $(i+1,j)$ for $1\le i\le m-1$ and $1\le j\le n$.
    \item[(ii)]There is an edge from $(i,j)$ to $(i, j+1)$ for $1\le i\le m$ and $1\le j\le n-1$.
    \item[(iii)]There is an edge from $(i,n)$ to $(i,1)$ for $1\le i\le m$.
    \item[(iv)]There is an edge from $(m,j)$ to $(1, n+1-j)$ for $1\le j\le n$.
\end{enumerate}

   The graph $\mathcal{K}_{m,n}$ has a canonical embedding in the Klein bottle. 
\end{definition}

Figure \ref{K44} illustrates the $\mathcal{K}_{4,4}$ grid graph. 

\begin{figure}
\centering
\begin{tikzpicture}
 \draw (-1,0)--(7.25,0);
 \draw (-1,1.5)--(7.25, 1.5);
 \draw (-1, 3)--(7.25, 3);
 \draw (-1, 4.5)--(7.25, 4.5);
 \draw (0,-.75)--(0, 5.25);
 \draw (2, -.75)--(2, 5.25);
 \draw (4, -.75)--(4, 5.25);
 \draw (6, -.75)--(6, 5.25);
 \draw[line width=0.5mm, color=purple] (-1,0)--(0,0);
 \draw[line width=0.5mm, color=purple] (6,4.5)--(7.25,4.5);
 \draw[line width=0.5mm, color=blue] (-1,1.5)--(0,1.5);
 \draw[line width=0.5mm, color=blue] (6,3)--(7.25,3);
 \draw[line width=0.5mm, color=red] (-1,3)--(0,3);
 \draw[line width=0.5mm, color=red] (6,1.5)--(7.25,1.5);
  \draw[line width=0.5mm, color=black] (-1,4.5)--(0,4.5);
 \draw[line width=0.5mm, color=black] (6,0)--(7.25,0);
 \node[above] at (0,5.25) {$e_5$};
 \node[below] at (0,-.75) {$e_5$};
 \node[above] at (2,5.25) {$e_6$};
 \node[below] at (2,-.75) {$e_6$};
 \node[above] at (4,5.25) {$e_7$};
 \node[below] at (4,-.75) {$e_7$};
 \node[above] at (6,5.25) {$e_8$};
 \node[below] at (6,-.75) {$e_8$};
 \node[left] at (-1,4.5) {\textcolor{black}{$\bf{e_1}$}};
 \node[right] at (7.25,0) {\textcolor{black}{$\bf{e_1}$}};
 \node[left] at (-1,3) {\textcolor{red}{$\bf{e_2}$}};
 \node[right] at (7.25,1.5) {\textcolor{red}{$\bf{e_2}$}};
 \node[left] at (-1,1.5) {\textcolor{blue}{$\bf{e_3}$}};
 \node[right] at (7.25,3) {\textcolor{blue}{$\bf{e_3}$}};
 \node[left] at (-1,0) {\textcolor{purple}{$\bf{e_4}$}};
 \node[right] at (7.25,4.5) {\textcolor{purple}{$\bf{e_4}$}};
 \draw[fill, color=blue] (0,0) circle [radius=.1];
 \draw[fill, color=blue] (0,1.5) circle [radius=.1];
 \draw[fill, color=blue] (0,3) circle [radius=.1];
 \draw[fill, color=blue] (0,4.5) circle [radius=.1];
 \draw[fill, color=blue] (2,0) circle [radius=.1];
 \draw[fill, color=blue] (2,1.5) circle [radius=.1];
 \draw[fill, color=blue] (2,3) circle [radius=.1];
 \draw[fill, color=blue] (2,4.5) circle [radius=.1];
 \draw[fill, color=blue] (4,0) circle [radius=.1];   
 \draw[fill, color=blue] (4,1.5) circle [radius=.1];   
 \draw[fill, color=blue] (4,3) circle [radius=.1];
 \draw[fill, color=blue] (4,4.5) circle [radius=.1];
 \draw[fill, color=blue] (6,0) circle [radius=.1];   
 \draw[fill, color=blue] (6,1.5) circle [radius=.1];   
 \draw[fill, color=blue] (6,3) circle [radius=.1];
 \draw[fill, color=blue] (6,4.5) circle [radius=.1];
 \node[above] at (0.5,0) {$(1,1)$};
 \node[above] at (0.5,1.5) {$(1,2)$};
 \node[above] at (0.5,3) {$(1,3)$};
 \node[above] at (0.5,4.5) {$(1,4)$};
 \node[above] at (2.5,0) {$(2,1)$};
 \node[above] at (2.5,1.5) {$(2,2)$};
 \node[above] at (2.5,3) {$(2,3)$};
 \node[above] at (2.5,4.5) {$(2,4)$};
 \node[above] at (4.5,0) {$(3,1)$};
 \node[above] at (4.5,1.5) {$(3,2)$};
 \node[above] at (4.5,3) {$(3,3)$};
 \node[above] at (4.5,4.5) {$(3,4)$};
 \node[above] at (6.5,0) {$(4,1)$};
 \node[above] at (6.5,1.5) {$(4,2)$};
 \node[above] at (6.5,3) {$(4,3)$};
 \node[above] at (6.5,4.5) {$(4,4)$};
\end{tikzpicture}
\caption{The grid graph $\mathcal{K}_{4,4}$.}\label{K44}
\end{figure}

\begin{lemma}\label{lem1}\rm(\cite[Lemma 7]{CurranLowLocke2})\it. 
    Let $m\ge 2$ and $n\ge 2$ be even integers, and suppose that the set $\{\ x_{u,v}\ : \ (u,v)\in V(\mathcal{K}_{m,n}) \}$ is a $C_4$-face-magic labeling on $\mathcal{K}_{m,n}$ with $C_4$ face-magic value $S$. Then $S=2mn+2$. In particular, the $C_4$ face magic value for $\mathcal{K}_{4,4}$ is $S=34$.
\end{lemma}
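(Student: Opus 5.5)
The plan is a double-counting argument over a set of faces that covers every vertex exactly once. Every $C_4$ face of $\mathcal{K}_{m,n}$ has the form
\[
F_{i,j}=\{(i,j),(i+1,j),(i,j+1),(i+1,j+1)\},
\]
where the second coordinate is read modulo $n$. When $i=m$, the vertices $(m+1,\cdot)$ are replaced by their twisted images under edge (iv): the face through $(m,j)$ and $(m,j+1)$ closes up with $(1,n+1-j)$ and $(1,n-j)$.

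\textbf{Choosing the faces.} Since $m$ and $n$ are even, I take only the faces $F_{2a-1,2b-1}$ with $1\le a\le m/2$ and $1\le b\le n/2$. Their row index is always at most $m-1$, so none of them uses the twisted edges (iv). Each is an ordinary $2\times 2$ block of the lattice $\{(i,j)\}$, with the vertical wrap (iii) needed only when $2b-1=n-1$, and then $2b=n$ stays within range. These $mn/4$ blocks tile $V(\mathcal{K}_{m,n})$: each vertex $(u,v)$ lies in exactly one of them, namely the one with $a=\lceil u/2\rceil$ and $b=\lceil v/2\rceil$.

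\textbf{Summing.} Each chosen block is a genuine $C_4$ face of the canonical embedding, so its label sum is $S$. Adding over all $mn/4$ blocks counts every label exactly once:
\[
\frac{mn}{4}\,S=\sum_{(u,v)} x_{u,v}=1+2+\cdots+mn=\frac{mn(mn+1)}{2}.
\]
This gives $S=2(mn+1)=2mn+2$. For $m=n=4$ it yields $S=34$, which is the particular case claimed.

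\textbf{Main check.} The only point needing care is confirming that each block $F_{2a-1,2b-1}$ really is a face of the embedding, that is, a 4-cycle bounding a region rather than just four vertices that happen to be mutually adjacent. For non-boundary indices this follows directly from the canonical embedding. The twisted identification only affects faces between row $m$ and row $1$, and this choice of blocks deliberately avoids those, so parity of $m$ and $n$ is exactly what makes the tiling possible.
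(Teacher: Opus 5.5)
Your argument is correct and is the same as the paper's: both tile $V(\mathcal{K}_{m,n})$ by the $mn/4$ blocks $\{(2a-1,2b-1),(2a,2b-1),(2a-1,2b),(2a,2b)\}$, none of which uses the twisted edges (iv), and sum their face sums to get $\tfrac{mn}{4}S=\tfrac{mn(mn+1)}{2}$, i.e.\ $S=2mn+2$. One small tidy-up: since $2b\le n$, these blocks never use the wrap edges (iii) either, so you can simply drop your remark about the case $2b-1=n-1$.
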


As an aid to the reader, we include a proof of Lemma \ref{lem1}. 

\begin{proof}
  By definition of a $C_4$-face-magic labeling of $\mathcal{K}_{m,n}$, 
\begin{equation*}
\{\ x_{u,v}\ : \ (u,v)\in V(\mathcal{K}_{m,n}) \}=\{1,2,...,mn\}.
\end{equation*}

  Let $m_0, n_0\in\mathbb{Z}$  such that $m=2m_0$ and $n=2n_0$. Then
\begin{equation}\label{1}
\begin{aligned}
 mnS&=4m_0n_0S=4\sum_{i=1}^{m_0}\sum_{j=1}^{n_0}\ S\\
    &=4\sum_{i=1}^{m_0}\sum_{j=1}^{n_0}\big( x_{2i-1, 2j-1}+x_{2i, 2j-1}+x_{2i-1, 2j}+x_{2i, 2j}  \big)\\
    &=4\sum_{u=1}^m\sum_{v=1}^nx_{u,v}=4\sum_{k=1}^{mn}k=4\cdot\tfrac{1}{2}\big(\ mn(mn+1)\ \big).
\end{aligned}
\end{equation}
 Thus $S=2mn+2$ by (\ref{1}).
\end{proof}

For the remainder of the paper we will focus 
exclusively on $C_4$-face-magic labelings on the
$4\times 4$ Klein bottle grid graph.
We want to characterize the $C_4$-face-magic labelings on
$\mathcal{K}_{4,4}$ up to symmetries on a Klein bottle.
In order to do this, we introduce the following two types of
$C_4$-face-magic labelings on $\mathcal{K}_{4,4}$.

\begin{definition}
Let  $X=\{x_{i,j}: 1\le i \le 4 \text{ and } 
   1\le j \le 4\}$ be a $C_4$-face-magic labeling 
   on  $\mathcal{K}_{4,4}$ with face-magic value $S$.
   \begin{enumerate}
       \item We say that $X$ is 
       \textit{vertically pairwise balanced} if
   \begin{equation*}
       x_{i,2j-1} + x_{i,2j} =\tfrac{1}{2} S
   \end{equation*}
   for all $1\le i \le 4$ and $1\le j \le 2$.
  \item Similarly, we say that $X$ is
   \textit{horizontally pairwise balanced} if
   \begin{equation*}
       x_{2i-1,j} + x_{2i,j} =\tfrac{1}{2} S
   \end{equation*}
   for all $1\le i \le 2$ and $1\le j \le 4$.
    \end{enumerate}
\end{definition}

We illustrate a vertically balanced $C_4$-face-magic labeling 
on $\mathcal{K}_{4,4}$ in Example \ref{ex:VerticallyBalanced} and a horizontally
balanced $C_4$-face-magic labeling 
in Example \ref{ex:HorizontallyBalanced}.

\begin{example}\label{ex:VerticallyBalanced}
Consider the
vertically balanced $C_4$-face-magic labeling 
on $\mathcal{K}_{4,4}$ 
with $C_4$-face-magic value $34$ 
in Figure \ref{fig:EquatorExample}.

\begin{figure}[hbt!]
\centering
\begin{tikzpicture}
 \draw (-1.5,0)--(7.75,0);
 \draw (-1.5,1.5)--(7.75, 1.5);
 \draw (-1.5, 3)--(7.75, 3);
 \draw (-1.5, 4.5)--(7.75, 4.5);
 \draw (0,-.75)--(0, 5.25);
 \draw (2, -.75)--(2, 5.25);
 \draw (4, -.75)--(4, 5.25);
 \draw (6, -.75)--(6, 5.25);
 \draw[color=red, line width=0.5mm](0,0)--(0,1.5);
 \draw[color=red, line width=0.5mm](0,3)--(0,4.5);
 \draw[color=red, line width=0.5mm](2, 0)--(2,1.5);
 \draw[color=red, line width=0.5mm](2,3)--(2,4.5);
 \draw[color=red, line width=0.5mm](4,0)--(4,1.5);
 \draw[color=red, line width=0.5mm](4,3)--(4,4.5);
  \draw[color=red, line width=0.5mm](6,0)--(6,1.5);
 \draw[color=red, line width=0.5mm](6,3)--(6,4.5);
 \node[above] at (0,5.25) {$e_5$};
 \node[below] at (0,-.75) {$e_5$};
 \node[above] at (2,5.25) {$e_6$};
 \node[below] at (2,-.75) {$e_6$};
 \node[above] at (4,5.25) {$e_7$};
 \node[below] at (4,-.75) {$e_7$};
 \node[above] at (6,5.25) {$e_8$};
 \node[below] at (6,-.75) {$e_8$};
 \node[left] at (-1.5,4.5) {\textcolor{black}{$e_1$}};
 \node[right] at (7.75,0) {\textcolor{black}{$e_1$}};
 \node[left] at (-1.5,3) {\textcolor{red}{$e_2$}};
 \node[right] at (7.75,1.5) {\textcolor{red}{$e_2$}};
 \node[left] at (-1.5,1.5) {\textcolor{blue}{$e_3$}};
 \node[right] at (7.75,3) {\textcolor{blue}{$e_3$}};
 \node[left] at (-1.5,0) {\textcolor{purple}{$e_4$}};
 \node[right] at (7.75,4.5) {\textcolor{purple}{$e_4$}};
 \draw[fill, color=red] (0,0) circle [radius=.1];
 \draw[fill, color=red] (0,1.5) circle [radius=.1];
 \draw[fill, color=red] (0,3) circle [radius=.1];
 \draw[fill, color=red] (0,4.5) circle [radius=.1];
 \draw[fill, color=red] (2,0) circle [radius=.1];
 \draw[fill, color=red] (2,1.5) circle [radius=.1];
 \draw[fill, color=red] (2,3) circle [radius=.1];
 \draw[fill, color=red] (2,4.5) circle [radius=.1];
 \draw[fill, color=red] (4,0) circle [radius=.1];   
 \draw[fill, color=red] (4,1.5) circle [radius=.1];   
 \draw[fill, color=red] (4,3) circle [radius=.1];
 \draw[fill, color=red] (4,4.5) circle [radius=.1];
 \draw[fill, color=red] (6,0) circle [radius=.1];   
 \draw[fill, color=red] (6,1.5) circle [radius=.1];   
 \draw[fill, color=red] (6,3) circle [radius=.1];
 \draw[fill, color=red] (6,4.5) circle [radius=.1];
 \node[above] at (0.7,0) {$x_{1,1}=\textcolor{red}{\bf{1}}$};
 \node[above] at (0.7,1.5) {$x_{1,2}=\textcolor{brilliantrose}{{16}}$};
 \node[above] at (0.7,3) {$x_{1,3}=\textcolor{brilliantrose}{3}$};
 \node[above] at (0.7,4.5) {$x_{1,4}=\textcolor{red}{\bf{14}}$};
 \node[above] at (2.8,0) {$x_{2,1}=\textcolor{red}{\bf{8}}$};
 \node[above] at (2.8,1.5) {$x_{2,2}=\textcolor{brilliantrose}{9}$};
 \node[above] at (2.8,3) {$x_{2,3}=\textcolor{brilliantrose}{6}$};
 \node[above] at (2.8,4.5) {$x_{2,4}=\textcolor{red}{\bf{11}}$};
 \node[above] at (4.7,0) {$x_{3,1}=\textcolor{red}{\bf{2}}$};
 \node[above] at (4.7,1.5) {$x_{3,2}=\textcolor{brilliantrose}{15}$};
 \node[above] at (4.7,3) {$x_{3,3}=\textcolor{brilliantrose}{4}$};
 \node[above] at (4.7,4.5) {$x_{3,4}=\textcolor{red}{\bf{13}}$};
 \node[above] at (6.8,0) {$x_{4,1}=\textcolor{red}{\bf{7}}$};
 \node[above] at (6.8,1.5) {$x_{4,2}=\textcolor{brilliantrose}{{10}}$};
 \node[above] at (6.8,3) {$x_{4,3}=\textcolor{brilliantrose}{{5}}$};
 \node[above] at (6.8,4.5) {$x_{4,4}=\textcolor{red}{\bf{12}}$};
\end{tikzpicture}
\caption{$C_4$-face-magic vertically pairwise balanced 
labeling on $\mathcal{K}_{4,4}$.} 
\label{fig:EquatorExample}
\end{figure}
\end{example}

\begin{example}\label{ex:HorizontallyBalanced}
Consider the 
horizontally pairwise balanced $C_4$-face-magic labeling 
on $\mathcal{K}_{4,4}$ with $C_4$-face-magic value $34$ in Figure \ref{fig:PairVerticalExample}.

\begin{figure}[hbt!]
\centering
\begin{tikzpicture}
 \draw (-1.5,0)--(7.75,0);
 \draw (-1.5,1.5)--(7.75, 1.5);
 \draw (-1.5, 3)--(7.75, 3);
 \draw (-1.5, 4.5)--(7.75, 4.5);
 \draw (0,-.75)--(0, 5.25);
 \draw (2, -.75)--(2, 5.25);
 \draw (4, -.75)--(4, 5.25);
 \draw (6, -.75)--(6, 5.25);
 \draw[color=blue, line width=0.5mm](0,0)--(2,0);
 \draw[color=blue, line width=0.5mm](4,0)--(6,0);
 \draw[color=blue, line width=0.5mm](0,1.5)--(2,1.5);
 \draw[color=blue, line width=0.5mm](4,1.5)--(6,1.5);
 \draw[color=blue, line width=0.5mm](0,3)--(2,3);
 \draw[color=blue, line width=0.5mm](4,3)--(6,3);
 \draw[color=blue, line width=0.5mm](0,4.5)--(2,4.5);
 \draw[color=blue, line width=0.5mm](4,4.5)--(6,4.5);
 \node[above] at (0,5.25) {$e_5$};
 \node[below] at (0,-.75) {$e_5$};
 \node[above] at (2,5.25) {$e_6$};
 \node[below] at (2,-.75) {$e_6$};
 \node[above] at (4,5.25) {$e_7$};
 \node[below] at (4,-.75) {$e_7$};
 \node[above] at (6,5.25) {$e_8$};
 \node[below] at (6,-.75) {$e_8$};
 \node[left] at (-1.5,4.5) {\textcolor{black}{$e_1$}};
 \node[right] at (7.75,0) {\textcolor{black}{$e_1$}};
 \node[left] at (-1.5,3) {\textcolor{red}{$e_2$}};
 \node[right] at (7.75,1.5) {\textcolor{red}{$e_2$}};
 \node[left] at (-1.5,1.5) {\textcolor{blue}{$e_3$}};
 \node[right] at (7.75,3) {\textcolor{blue}{$e_3$}};
 \node[left] at (-1.5,0) {\textcolor{purple}{$e_4$}};
 \node[right] at (7.75,4.5) {\textcolor{purple}{$e_4$}};
 \draw[fill, color=blue] (0,0) circle [radius=.1];
 \draw[fill, color=blue] (0,1.5) circle [radius=.1];
 \draw[fill, color=blue] (0,3) circle [radius=.1];
 \draw[fill, color=blue] (0,4.5) circle [radius=.1];
 \draw[fill, color=blue] (2,0) circle [radius=.1];
 \draw[fill, color=blue] (2,1.5) circle [radius=.1];
 \draw[fill, color=blue] (2,3) circle [radius=.1];
 \draw[fill, color=blue] (2,4.5) circle [radius=.1];
 \draw[fill, color=blue] (4,0) circle [radius=.1];   
 \draw[fill, color=blue](4,1.5) circle [radius=.1];   
 \draw[fill, color=blue] (4,3) circle [radius=.1];
 \draw[fill, color=blue] (4,4.5) circle [radius=.1];
 \draw[fill, color=blue] (6,0) circle [radius=.1];   
 \draw[fill, color=blue] (6,1.5) circle [radius=.1];   
 \draw[fill, color=blue] (6,3) circle [radius=.1];
 \draw[fill, color=blue] (6,4.5) circle [radius=.1];
 \node[above] at (0.7,0) {$x_{1,1}=\textcolor{blue}{\bf{1}}$};
 \node[above] at (0.7,1.5) {$x_{1,2}=\textcolor{blue}{\bf{8}}$};
 \node[above] at (0.7,3) {$x_{1,3}=\textcolor{blue}{\bf{2}}$};
 \node[above] at (0.7,4.5) {$x_{1,4}=\textcolor{blue}{\bf{7}}$};
 \node[above] at (2.8,0) {$x_{2,1}=\textcolor{bleudefrance}{16}$};
 \node[above] at (2.8,1.5) {$x_{2,2}=\textcolor{bleudefrance}{9}$};
 \node[above] at (2.8,3) {$x_{2,3}=\textcolor{bleudefrance}{15}$};
 \node[above] at (2.8,4.5) {$x_{2,4}=\textcolor{bleudefrance}{10}$};
 \node[above] at (4.7,0) {$x_{3,1}=\textcolor{bleudefrance}{3}$};
 \node[above] at (4.7,1.5) {$x_{3,2}=\textcolor{bleudefrance}{6}$};
 \node[above] at (4.7,3) {$x_{3,3}=\textcolor{bleudefrance}{4}$};
 \node[above] at (4.7,4.5) {$x_{3,4}=\textcolor{bleudefrance}{5}$};
 \node[above] at (6.8,0) {$x_{4,1}=\textcolor{blue}{\bf{14}}$};
 \node[above] at (6.8,1.5) {$x_{4,2}=\textcolor{blue}{\bf{11}}$};
 \node[above] at (6.8,3) {$x_{4,3}=\textcolor{blue}{\bf{13}}$};
 \node[above] at (6.8,4.5) {$x_{4,4}=\textcolor{blue}{\bf{12}}$};
\end{tikzpicture}
\caption{$C_4$-face-magic horizontally pairwise balanced 
labeling on $\mathcal{K}_{4,4}$.} 
\label{fig:PairVerticalExample}
\end{figure}
\end{example}

\begin{definition}
Let $X=\{ x_{i,j} : (i,j) \in V(\mathcal{K}_{4,4})\}$ 
and  $Y=\{ y_{i,j} : (i,j) \in V(\mathcal{K}_{4,4})\}$ 
be $C_4$-face magic labelings on $\mathcal{K}_{4,4}$. 
We say that $X$ is \textit{Klein bottle labeling 
equivalent} to $Y$ if there exists a graph isomorphism 
$\varphi$ induced by a homeomorphism on the Klein bottle 
grid such that $x_{i,j}=y_{\varphi(i,j)}$ for all 
$(i,j)\in V(\mathcal{K}_{4,4})$.
\end{definition}

In particular, the four symmetries given in Definition
\ref{defn:KBSymmetries} generate all the 
graph isomorphisms on $\mathcal{K}_{4,4}$ that are 
induced by a homeomorphism on the Klein bottle.

\begin{definition}\label{defn:KBSymmetries}
We define the following graph isomorphisms of $\mathcal{K}_{4,4}$ 
induced by a symmetry on the Klein bottle:
\begin{align}
  E(i,j)&=
  \begin{cases}
   (i+1,j) & \text{if $1\le i\le 3, 1\le j\le 4$,}\\
   (1,5-j) & \text{if $i=4, 1\le j\le 4$,}
  \end{cases}\label{te}\\
    &\notag\\
  H(i,j)&=E^4(i,j)=(i, 5-j)\ \text{for $1\le i\le 4, 1\le j\le 4$,}\ \label{th} \\
  &\notag\\
  N(i,j)&=\big(\ i\ ,\ (j+2)\ \text{mod}\ 4\  \big)\ \text{for $1\le i\le 4, 1\le j\le 4$, and}\label{tn}\\
  &\notag\\
  V(i,j)&=
   \begin{cases}
       \big(i\ ,\ j\big)\ & \text{if $i=1, 1\le j\le 4$,}\\
       \big(\ (6-i)\ \text{mod}\ 4\ , \ 5-j\  \big)\ & \text{if $2\le i\le 4, 1\le j\le 4$.}
   \end{cases}\label{tv}
\end{align}
We define the \textit{Klein bottle labeling symmetry 
group on $\mathcal{K}_{4,4}$} as 
\begin{equation}\label{kbls}
\begin{aligned}
\mathit{KBLS}(4)&=\langle\ E, H, N, V \ \rangle\ =\ \big\{ E^i N^j V^k \ : \ 0\le i\le 7, \ j,k\in\{0,1\} \big\}\\
&=\big\{ E^i H^j N^k V^{\ell} \ : \ 0\le i\le 3,\ i,j,\ell\in\{0,1\} \big\}
\end{aligned}
\end{equation}
\end{definition}

\begin{example}
    The result of the symmetry given by $V$ in Definition \ref{defn:KBSymmetries} is illustrated
    in Figure \ref{fig:VTransform}.

\begin{figure}
\centering

\begin{tikzpicture}
 \draw (-1,0)--(7.25,0);
 \draw (-1,1.5)--(7.25, 1.5);
 \draw (-1, 3)--(7.25, 3);
 \draw (-1, 4.5)--(7.25, 4.5);
 \draw (0,-.75)--(0, 5.25);
 \draw (2, -.75)--(2, 5.25);
 \draw (4, -.75)--(4, 5.25);
 \draw (6, -.75)--(6, 5.25);
 \node[above] at (0,5.25) {$e'_5$};
 \node[below] at (0,-.75) {$e'_5$};
 \node[above] at (2,5.25) {$e'_6$};
 \node[below] at (2,-.75) {$e'_6$};
 \node[above] at (4,5.25) {$e'_7$};
 \node[below] at (4,-.75) {$e'_7$};
 \node[above] at (6,5.25) {$e'_8$};
 \node[below] at (6,-.75) {$e'_8$};
 \node[left] at (-1,4.5) {\textcolor{black}{$e'_1$}};
 \node[right] at (7.25,0) {\textcolor{black}{$e'_1$}};
 \node[left] at (-1,3) {\textcolor{red}{$e'_2$}};
 \node[right] at (7.25,1.5) {\textcolor{red}{$e'_2$}};
 \node[left] at (-1,1.5) {\textcolor{blue}{$e'_3$}};
 \node[right] at (7.25,3) {\textcolor{blue}{$e'_3$}};
 \node[left] at (-1,0) {\textcolor{purple}{$e'_4$}};
 \node[right] at (7.25,4.5) {\textcolor{purple}{$e'_4$}};
 \draw[fill, color=blue] (0,0) circle [radius=.1];
 \draw[fill, color=blue] (0,1.5) circle [radius=.1];
 \draw[fill, color=blue] (0,3) circle [radius=.1];
 \draw[fill, color=blue] (0,4.5) circle [radius=.1];
 \draw[fill, color=red] (2,0) circle [radius=.1];
 \draw[fill, color=red] (2,1.5) circle [radius=.1];
 \draw[fill, color=red] (2,3) circle [radius=.1];
 \draw[fill, color=red] (2,4.5) circle [radius=.1];
 \draw[fill, color=blue] (4,0) circle [radius=.1];   
 \draw[fill, color=blue] (4,1.5) circle [radius=.1];   
 \draw[fill, color=blue] (4,3) circle [radius=.1];
 \draw[fill, color=blue] (4,4.5) circle [radius=.1];
 \draw[fill, color=red] (6,0) circle [radius=.1];   
 \draw[fill, color=red] (6,1.5) circle [radius=.1];   
 \draw[fill, color=red] (6,3) circle [radius=.1];
 \draw[fill, color=red] (6,4.5) circle [radius=.1];
 \node[above] at (0.5,0) {$(1,1)$};
 \node[above] at (0.5,1.5) {$(1,2)$};
 \node[above] at (0.5,3) {$(1,3)$};
 \node[above] at (0.5,4.5) {$(1,4)$};
 \node[above] at (2.5,0) {$(4,\textcolor{red}{4})$};
 \node[above] at (2.5,1.5) {$(4,\textcolor{red}{3})$};
 \node[above] at (2.5,3) {$(4,\textcolor{red}{2})$};
 \node[above] at (2.5,4.5) {$(4,\textcolor{red}{1})$};
 \node[above] at (4.5,0) {$(3,\textcolor{red}{4})$};
 \node[above] at (4.5,1.5) {$(3,\textcolor{red}{3})$};
 \node[above] at (4.5,3) {$(3,\textcolor{red}{2})$};
 \node[above] at (4.5,4.5) {$(3,\textcolor{red}{1})$};
 \node[above] at (6.5,0) {$(2,\textcolor{red}{4})$};
 \node[above] at (6.5,1.5) {$(2,\textcolor{red}{3})$};
 \node[above] at (6.5,3) {$(2,\textcolor{red}{2})$};
 \node[above] at (6.5,4.5) {$(2,\textcolor{red}{1})$};
\end{tikzpicture}
\caption{The transform $V$ on the grid graph $\mathcal{K}_{4,4}$.}\label{fig:VTransform}
\end{figure}

\end{example}

When we incorporate the symmetries of $\mathit{KBLS}(4)$
on a $C_4$-face-magic labeling on $\mathcal{K}_{4,4}$,
we observe that a $C_4$-face-magic labeling
on $\mathcal{K}_{4,4}$ is Klein bottle labeling equivalent
to a unique $C_4$-face-magic labeling
on $\mathcal{K}_{4,4}$ that satisfies the two properties 
listed in Proposition \ref{prop:LabelEquiv}.

\begin{proposition}\label{prop:LabelEquiv}
    Let  $X=\{x_{i,j}: 1\le i \le 4 \text{ and } 
   1\le j \le 4\}$ be a $C_4$-face-magic labeling 
   on  $\mathcal{K}_{4,4}$ with face-magic value $S=34$.
   Then there exists a unique 
   $C_4$-face-magic labeling 
   $Y=\{y_{i,j}: 1\le i \le 4 \text{ and } 
   1\le j \le 4\}$ 
   on  $\mathcal{K}_{4,4}$
  which is Klein bottle labeling equivalent to $X$ such that
   \begin{enumerate}
       \item $y_{1,1}=1$ and
        \item $\min\{ y_{2,2},y_{2,3}\}
        <\min\{ y_{4,2},y_{4,3}\}$.
   \end{enumerate}
\end{proposition}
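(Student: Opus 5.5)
The plan is to exploit the action of $\mathit{KBLS}(4)$ on $V(\mathcal{K}_{4,4})$. First I note that each element of $\mathit{KBLS}(4)$ is a graph isomorphism induced by a homeomorphism of the Klein bottle. Such a map sends $4$-cycle faces to $4$-cycle faces. Hence if $X$ is a $C_4$-face-magic labeling and $\varphi\in\mathit{KBLS}(4)$, then $Y$ defined by $y_{\varphi(i,j)}=x_{i,j}$ is again $C_4$-face-magic, with the same value $S=34$ by Lemma \ref{lem1}. So the statement reduces to a question about orbits of labelings under this group.

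The first step is to show the action on vertices is transitive. Iterating $E$ from $(1,1)$ gives
\[
(1,1),(2,1),(3,1),(4,1),(1,4),(2,4),(3,4),(4,4),
\]
which are eight vertices, and $E^8$ is the identity. Applying $N$ to these produces the remaining eight vertices, those in columns $j=2,3$. Thus the orbit of $(1,1)$ is all $16$ vertices. Since $|\mathit{KBLS}(4)|=32$ by (\ref{kbls}), the stabilizer of $(1,1)$ has order $2$.

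Next I identify this stabilizer. From (\ref{tv}), $V$ fixes $(1,1)$ and $V\neq\mathrm{id}$, so the stabilizer is $\{\mathrm{id},V\}$. Moreover $V(2,j)=(4,5-j)$ and $V(4,j)=(2,5-j)$, so $V$ interchanges the vertex sets $\{(2,2),(2,3)\}$ and $\{(4,3),(4,2)\}$.

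Existence then follows in two moves. Let $(a,b)$ be the vertex with $x_{a,b}=1$, and choose $\varphi$ with $\varphi(a,b)=(1,1)$. The transported labeling $Y$ then has $y_{1,1}=1$. The two minima $\min\{y_{2,2},y_{2,3}\}$ and $\min\{y_{4,2},y_{4,3}\}$ are distinct, since labels are distinct and the four vertices are distinct. If condition (2) fails, compose with $V$. This keeps $y_{1,1}=1$ and swaps the two minima, so the resulting labeling satisfies both conditions.

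For uniqueness, suppose $Y$ and $Y'$ are both equivalent to $X$ and both satisfy (1) and (2). Then $Y'$ is obtained from $Y$ by some $\psi\in\mathit{KBLS}(4)$. Because the label $1$ occurs only once and sits at $(1,1)$ in both, $\psi$ fixes $(1,1)$, so $\psi\in\{\mathrm{id},V\}$. If $\psi=V$, condition (2) for $Y$ would force the reverse strict inequality for $Y'$, which is a contradiction. Hence $\psi=\mathrm{id}$ and $Y'=Y$.

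I expect the main obstacle to be justifying that the stabilizer is exactly $\{\mathrm{id},V\}$. This rests on the order $32$ claimed in (\ref{kbls}), and on the fact that equivalence is defined via this group. If that order needs independent support, I would argue directly. An automorphism fixing $(1,1)$ and preserving the face structure is determined by where it sends the face corners adjacent to $(1,1)$, and the local checks leave only the identity and $V$.
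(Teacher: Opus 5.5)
Your proof is correct. The existence half is the paper's argument: bring the label $1$ to $(1,1)$ with powers of $E$ and possibly $N$, then apply $V$ if (2) fails. You replace the paper's explicit exponents by transitivity of $\mathit{KBLS}(4)$ on vertices, which also avoids a slip in the paper. Under the convention that makes its $j\in\{1,4\}$ cases correct, the stated exponents for $j\in\{2,3\}$ send the label $1$ to $(1,2)$ rather than $(1,3)$; the exponents $9-i$ and $5-i$ there should be interchanged.

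The genuine difference is uniqueness. The paper only asserts that ``one needs to use all elements in $\mathit{KBLS}(4)$.'' Your orbit--stabilizer argument actually proves it: any symmetry relating two normalized labelings fixes $(1,1)$, the stabilizer is $\{\mathrm{id},V\}$, and $V$ exchanges $\min\{y_{2,2},y_{2,3}\}$ with $\min\{y_{4,2},y_{4,3}\}$.

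The only external input is $|\mathit{KBLS}(4)|\le 32$, and you need not take (\ref{kbls}) on faith. The relations $E^8=N^2=V^2=\mathrm{id}$, $H=E^4$, $EN=NE$, $VN=NV$ and $VEV=E^{-1}$ are each a direct check on vertices. They reduce every word to the form $E^aN^bV^c$. Together with the orbit of size $16$ and $V\ne\mathrm{id}$, this forces the stabilizer to be exactly $\{\mathrm{id},V\}$.

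This route is preferable to your fallback ``local check.'' Locally at $(1,1)$ the grid has full dihedral symmetry, so excluding the other six candidates requires global facts. Vertical cycles have length $4$ but horizontal ones have length $8$, which rules out the four candidates that swap the two directions. The twisted edges rule out reversing the vertical direction at $(1,1)$.
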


\begin{proof}
   Suppose $x_{i,j}=1$. If $j\in \{1,4\}$, let $Z=E^{9-i}(X)$ if $j=1$ and
   let $Z=E^{5-i}(X)$ if $j=4$. Then $z_{1,1}=1$.
   If $j\in \{2,3\}$, let $Z=E^{9-i}(X)$ if $j=2$ and
   let $Z=E^{5-i}(X)$ if $j=3$. Then $z_{1,3}=1$.

   Next, let $W=Z$ if $z_{1,1}=1$, and
        let $W=N(Z)$ if $z_{1,3}=1$. 
        Then  $w_{1,1}=1$.

   Further, let $Y=W$ if $\min\{ w_{2,2},w_{2,3}\}
        <\min\{ w_{4,2},w_{4,3}\}$, and
        let $Y=V(W)$ if $\min\{ w_{2,2},w_{2,3}\}
        >\min\{ w_{4,2},w_{4,3}\}$. 
        Then  $\min\{ y_{2,2},y_{2,3}\}
        <\min\{ y_{4,2},y_{4,3}\}$.
        Since one needs to use all elements in
        $\mathit{KBLS}(4)$ to obtain the $C_4$-face-magic
        labeling $Y$ with properties (1) and (2),
        $Y$ is the unique $C_4$-face-magic labeling on $\mathcal{K}_{4,4}$
        Klein bottle labeling equivalent to $X$
         that satisfies properties (1) and (2).
\end{proof}

The two properties of Proposition \ref{prop:LabelEquiv} 
naturally lead to the concept of a properly symmetrized 
$C_4$-face-magic labeling given in
Definition \ref{Defn:ProperlySymmetrized}.

\begin{definition} \label{Defn:ProperlySymmetrized}
    We say that a $C_4$-face-magic labeling given by\\
   $Y=\{y_{i,j}: 1\le i \le 4 \text{ and } 
   1\le j \le 4\}$ 
   on  $\mathcal{K}_{4,4}$
   is \textit{properly symmetrized}
  if $Y$ satisfies 
   \begin{enumerate}
       \item $y_{1,1}=1$ and
        \item $\min\{ y_{2,2},y_{2,3}\}
        <\min\{ y_{4,2},y_{4,3}\}$.
    \end{enumerate}
\end{definition}

We illustrate the use of Proposition \ref{prop:LabelEquiv}
in Example \ref{ex:SeqTo Symmetrize}.

\begin{example} \label{ex:SeqTo Symmetrize}
    Given a $C_4$-face-magic labeling 
    on $\mathcal{K}_{4,4}$, 
    Figure \ref{fig:properlysymmetrized} illustrates 
    the sequence of transformations in $KBLS(4)$ 
    used in Proposition \ref{prop:LabelEquiv} to 
    properly symmetrize this labeling.
    We will find it useful to represent the $4\times 4$
    Klein bottle grid graph by a $4\times 4$
    checkerboard where each cell represents a vertex, and the
    label in each cell is the label of the corresponding 
    vertex of $\mathcal{K}_{4,4}$.

\begin{figure}[hbt!]
\centering

\begin{tikzpicture}
     \node at (-.5, 5)
    {\begin{tabular}{|c|c|c|c|} \hline
        2  &13  &4   &15  \\  \hline
        7  &\textcolor{blue}{12}   &5   &\textcolor{blue}{$\mathbf{10}$}  \\  \hline
        1  &\textcolor{blue}{14}   &3   &\textcolor{blue}{16}  \\  \hline
        \textcolor{red}{8}  &11  &6   &9  \\  \hline
    \end{tabular}};
    \node[right] at (1.3, 5.5) {$y_{(1,1)}\neq 1$};
    \node[right] at (1.3, 4.8) {$\min\{ y_{2,2},y_{2,3}\}
        \nless\min\{ y_{4,2},y_{4,3}\}$};
    \node[left] at (-2, 4.8) {$Y :$};
\node at (-.5,2.5)
    {\begin{tabular}{|c|c|c|c|} \hline
        1  &14  &3   &16  \\  \hline
        8  &\textcolor{blue}{11}   &6   &\textcolor{blue}{$\mathbf{9}$}  \\  \hline
        2  &\textcolor{blue}{13}   &4   &\textcolor{blue}{15}  \\  \hline
        \textcolor{red}{7}  &12  &5   &10  \\  \hline
    \end{tabular}}; 
    \node[left] at (-2, 2.5) {$Z=N(Y):$}; 
    \node[right] at (1.3, 2.8) {$z_{(1,1)}\neq 1$};
    \node[right] at (1.3, 2.1) {$\min\{ z_{2,2},z_{2,3}\}
        \nless\min\{ z_{4,2},z_{4,3}\}$};
    \node at (-.5,0)
       {\begin{tabular}{|c|c|c|c|} \hline
        7  &12  &5   &10  \\  \hline
        2  & \textcolor{blue}{13}  &4   &\textcolor{blue}{15}  \\  \hline
        8  & \textcolor{blue}{11}   &6   &\textcolor{blue}{\bf{9}}  \\  \hline
        \textcolor{red}{1}  &14  &3   &16  \\  \hline
    \end{tabular}};
 \node[left] at (-2, -.1) {$W=H(Z):$}; 
    \node[right] at (1.3, .4) {$w_{(1,1)}= 1$};
    \node[right] at (1.3, -.2) {$\min\{ w_{2,2},w_{2,3}\}
        \nless\min\{ w_{4,2},w_{4,3}\}$};
 \node at (-.5,-2.5)
       {\begin{tabular}{|c|c|c|c|} \hline
        7  &16  &3   &14  \\  \hline
        2  & \textcolor{blue}{$\mathbf{9}$}  &6   &\textcolor{blue}{11}  \\  \hline
        8  & \textcolor{blue}{15}   &4   &\textcolor{blue}{13}  \\  \hline
        \textcolor{red}{1}  &10  &5   &12  \\  \hline
    \end{tabular}};
 \node[left] at (-2, -2.5) {$X=V(W):$}; 
    \node[right] at (1.3, -2.1) {$x_{(1,1)}= 1$};
    \node[right] at (1.3, -2.7) {$\min\{ x_{2,2},x_{2,3}\}
        <\min\{ x_{4,2},x_{4,3}\}$};       
    \end{tikzpicture}

\caption{The properly symmetrized labeling obtained
by the sequence of symmetries listed in 
Proposition \ref{prop:LabelEquiv}.}
\label{fig:properlysymmetrized}
\end{figure}
\end{example}

\begin{definition}\label{defn:StandardLabel}
    We say that a properly symmetrized
    $C_4$-face-magic labeling\\
     $Y=\{y_{i,j}: 1\le i \le 4 \text{ and } 
   1\le j \le 4\}$ on  $\mathcal{K}_{4,4}$
   is a \textit{standard labeling} if
   \begin{enumerate}
       \item $y_{2,1} > y_{2,4}$,
       \item $y_{3,1} < y_{3,4}$, and
       \item $y_{4,1} > y_{4,4}$.
   \end{enumerate}
\end{definition}

\begin{definition}\label{defn:ElemOperns}
     Let  $X=\{x_{i,j}: 1\le i \le 4 \text{ and } 
   1\le j \le 4\}$ be a 
   properly symmetrized $C_4$-face-magic labeling 
   on  $\mathcal{K}_{4,4}$. 
   Let $k\in\{2,3,4\}$.
   An \textit{elementary vertical cycle operation}
   of $X$ is a transformation $\widehat{V}_k$
   of $X$ to a $C_4$-face-magic labeling
   $Y=\widehat{V}_k(X)$ given by
   \begin{itemize}
       \item $y_{i,j}=x_{i,5-j}$ if $i=k$, and 
       \item $y_{i,j}=x_{i,j}$ otherwise.
   \end{itemize}

   We say that two properly
   symmetrized $C_4$-face-magic labelings $X$ and $Y$
   on $\mathcal{K}_{4,4}$
   are \textit{vertical cycle equivalent} if
   there exists a sequence of of elementary 
  vertical cycle 
   operations that transforms $X$ to $Y$.
   Since each elementary cycle operation $\widehat{V}_k$
   is an involution, vertical cycle equivalence 
   is an equivalence relation.
\end{definition}

In Proposition \ref{prop:extendedgesums}, we analyze 
the structure 
of a $C_4$-face-magic labeling on $\mathcal{K}_{4,4}$.

\begin{proposition}\label{prop:extendedgesums}
Suppose  $X=\{x_{i,j}: 1\le i \le 4 \text{ and } 
   1\le j \le 4\}$ is a $C_4$-face-magic labeling
   on $\mathcal{K}_{4,4}$ with face-magic value $S$.
   Let $a_1= x_{1,1} + x_{1,2}$,
    $a_2= x_{1,2} + x_{1,3}$, and
     $a_3= x_{1,4} + x_{1,1}$.
        Then
\begin{enumerate}
\item[(i)]
\begin{align*}
   a_1 &= x_{2i-1,2j-1} + x_{2i-1,2j} 
    \text{ \ \  and } \\
  S - a_1 &=  x_{2i,2j-1} + x_{2i,2j} 
    \text{ \ \ \ \ \ \ \ \ for } 1\le i \le 2 
    \text{ and } 1 \le j \le 2.
\end{align*}
\item[(ii)]
\begin{align*}
    a_2 & = x_{2i-1, 2} + x_{2i-1, 3}
    \text{ \ \  and } \\
    S - a_2 & = x_{2i, 2} + x_{2i,3}
    \text{ \ \ \ \ \ \ \ \ for } 1\le i \le 2
\end{align*}
\item[(iii)]
\begin{align*}
    a_3 & = x_{2i-1, 4} + x_{2i-1, 1}
    \text{ \ \  and } \\
    S - a_3 & = x_{2i, 4} + x_{2i,1}
    \text{ \ \ \ \ \ \ \ \ for } 1\le i \le 2
\end{align*}
\end{enumerate}
Furthermore, $2a_1 = a_2 + a_3$. In addition, for $1\le i \le 4$,
\begin{equation} \label{eqn:SameEdgeSum}
    x_{i,1} + x_{i,2} = x_{i,3} + x_{i,4}.
\end{equation}
\end{proposition}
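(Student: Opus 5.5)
The plan is to translate the face-magic condition into relations among the sums of labels on ``horizontal'' edges, meaning the edges $(i,j)$--$(i,j+1)$ inside a fixed row index $i$, with $j$ read cyclically mod $4$. For $1\le i\le 4$ and $j\in\mathbb{Z}/4$, set
\begin{equation*}
p_i(j)=x_{i,j}+x_{i,j+1},
\end{equation*}
where $x_{i,5}=x_{i,1}$. In this notation $a_1=p_1(1)$, $a_2=p_1(2)$ and $a_3=p_1(4)$.

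First I will list the $C_4$ faces of $\mathcal{K}_{4,4}$ from the edge set in the Definition. There are two kinds.
\begin{itemize}
\item The ordinary faces, for $1\le i\le 3$ and $j$ cyclic, have vertices $(i,j),(i,j+1),(i+1,j),(i+1,j+1)$. The face condition for these reads $p_i(j)+p_{i+1}(j)=S$.
\item The twisted faces, created by the edges $(4,j)\sim(1,5-j)$, have vertices $(4,j),(4,j+1),(1,5-j),(1,4-j)$. Since $\{5-j,4-j\}$ is a consecutive pair of columns, the face condition for these reads $p_4(j)+p_1(4-j)=S$, with indices mod $4$.
\end{itemize}
The only place I expect to need care is checking this index bookkeeping for the twisted faces, in particular at $j=4$, where the pair is $\{1,4\}$.

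From the ordinary faces, chaining $i=1,2,3$ gives $p_1(j)=p_3(j)$ and $p_2(j)=p_4(j)=S-p_1(j)$ for every $j$. Substituting $p_4(j)=S-p_1(j)$ into the twisted-face relation gives $p_1(4-j)=p_1(j)$.
\begin{itemize}
\item For $j=2$ and $j=4$ this relation is trivial.
\item For $j=1$ it gives $p_1(3)=p_1(1)$, that is, $x_{1,1}+x_{1,2}=x_{1,3}+x_{1,4}=a_1$.
\end{itemize}

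The statements now follow directly.
\begin{itemize}
\item Part (i) follows from $p_1(1)=p_1(3)=p_3(1)=p_3(3)=a_1$ and $p_2=p_4=S-p_1$ at $j=1,3$.
\item Parts (ii) and (iii) are the cases $j=2$ and $j=4$ of $p_1=p_3$ and $p_2=p_4=S-p_1$.
\item Equation~\eqref{eqn:SameEdgeSum} says $p_i(1)=p_i(3)$ for each $i$. For $i=1,3$ this is the value $a_1$ from part (i), and for $i=2,4$ it is the value $S-a_1$.
\item Finally, $a_2+a_3=x_{1,2}+x_{1,3}+x_{1,4}+x_{1,1}=p_1(1)+p_1(3)=2a_1$.
\end{itemize}
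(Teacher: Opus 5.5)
Your proposal is correct and is essentially the paper's argument: compare two faces that share a horizontal edge to cancel the common edge sum, chain this down the rows, and use the twisted face on $(4,1),(4,2),(1,4),(1,3)$ to transfer $a_1$ to $x_{1,3}+x_{1,4}$. Your $p_i(j)$ bookkeeping does this uniformly for every column pair, so it actually proves (ii) and (iii), where the twisted relations $p_1(4-j)=p_1(j)$ are trivial and only the ordinary faces are needed, whereas the paper leaves those parts as ``similar.''
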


\begin{figure}
\centering
    \begin{tikzpicture}
 \draw (-1.5,0)--(7.75,0);
 \draw (-1.5,1.5)--(7.75, 1.5);
 \draw (-1.5, 3)--(7.75, 3);
 \draw (-1.5, 4.5)--(7.75, 4.5);
 \draw (0,-1.5)--(0, 6);
 \draw[dashed] (2, -1.55)--(2, 1.5);
 \draw[dashed] (2,3)--(2,6);
 \draw[dashed] (4, -1.5)--(4, 6);
 \draw[dashed] (6,-1.5)--(6, 6);
 \draw[dashed] (6,3)--(6, 5.25);
 \draw[dashed] (2,1.5)--(2,3);
 \draw[dashed] (6,1.5)--(6,3);
 \draw[dashed, line width=1.5pt] (2,4.5)--(2,6);
 \draw[dashed, line width=1.5pt] (2,-1.5)--(2,0);
 \draw[dashed, line width=1.5pt] (6,4.5)--(6,6);
 \draw[dashed, line width=1.5pt] (6,-1.5)--(6,0);
  \draw[color=blue, line width=1.5pt] (0,0)--(0,1.5);
 \draw[color=red, line width=1.5pt] (0,1.5)--(0,3);
 \draw[color=blue, line width=1.5pt] (0,3)--(0,4.5);
 \draw[color=orange, line width=1.5pt] (0,-1.5)--(0,0);
 \draw[color=orange, line width=1.5pt] (0,4.5)--(0,6);
 \draw[color=blue, line width=1.5pt] (4,0)--(4,1.5);
 \draw[color=red, line width=1.5pt] (4,1.5)--(4,3);
 \draw[color=blue, line width=1.5pt] (4,3)--(4,4.5);
 \draw[color=orange, line width=1.5pt] (4,-1.5)--(4,0);
 \draw[color=orange, line width=1.5pt] (4,4.5)--(4,6);
 \node at (0,.7) {$\color{blue}{\underline{\hspace{.13in}}}$};
 \node at (0,2.25) {$\color{red}{\underline{\hspace{.13in}}}$};
 \node at (0,2.15) {$\color{red}{\underline{\hspace{.13in}}}$};
 \node at (0,3.75) {$\color{blue}{\underline{\hspace{.13in}}}$};
 \node at (0,5.15) {$\color{orange}{\underline{\hspace{.13in}}}$};
 \node at (0,5.35) {$\color{orange}{\underline{\hspace{.13in}}}$};
 \node at (0,5.25) {$\color{orange}{\underline{\hspace{.13in}}}$};
 \node at (4,.7) {$\color{blue}{\underline{\hspace{.13in}}}$};
 \node at (4,2.25) {$\color{red}{\underline{\hspace{.13in}}}$};
 \node at (4,2.15) {$\color{red}{\underline{\hspace{.13in}}}$};
 \node at (4,3.75) {$\color{blue}{\underline{\hspace{.13in}}}$};
 \node at (4,5.35) {$\color{orange}{\underline{\hspace{.13in}}}$};
 \node at (4,5.15) {$\color{orange}{\underline{\hspace{.13in}}}$};
 \node at (4,5.25) {$\color{orange}{\underline{\hspace{.13in}}}$};
 \node at (0,-.6) {$\color{orange}{\underline{\hspace{.13in}}}$};
 \node at (0,-.75) {$\color{orange}{\underline{\hspace{.13in}}}$};
 \node at (0,-.85) {$\color{orange}{\underline{\hspace{.13in}}}$};
 \node at (4,-.6) {$\color{orange}{\underline{\hspace{.13in}}}$};
 \node at (4,-.75) {$\color{orange}{\underline{\hspace{.13in}}}$};
 \node at (4,-.85) {$\color{orange}{\underline{\hspace{.13in}}}$};
 \node at (2,2.25) {$\color{black}{\underline{\hspace{.13in}}}$};
 \node at (2,2.15) {$\color{black}{\underline{\hspace{.13in}}}$};
 \node at (6,2.25) {$\color{black}{\underline{\hspace{.13in}}}$};
 \node at (6,2.15) {$\color{black}{\underline{\hspace{.13in}}}$};
 \node at (2,5.15) {$\color{black}{\underline{\hspace{.13in}}}$};
 \node at (2,5.35) {$\color{black}{\underline{\hspace{.13in}}}$};
 \node at (2,5.25) {$\color{black}{\underline{\hspace{.13in}}}$};
 \node at (2,-.6) {$\color{black}{\underline{\hspace{.13in}}}$};
 \node at (2,-.75) {$\color{black}{\underline{\hspace{.13in}}}$};
 \node at (2,-.85) {$\color{black}{\underline{\hspace{.13in}}}$};
 \node at (6,-.6) {$\color{black}{\underline{\hspace{.13in}}}$};
 \node at (6,-.75) {$\color{black}{\underline{\hspace{.13in}}}$};
 \node at (6,-.85) {$\color{black}{\underline{\hspace{.13in}}}$};
 \node at (6,5.15) {$\color{black}{\underline{\hspace{.13in}}}$};
 \node at (6,5.35) {$\color{black}{\underline{\hspace{.13in}}}$};
 \node at (6,5.25) {$\color{black}{\underline{\hspace{.13in}}}$};
 \node[above] at (0,6) {$\mathbf{e_5}$};
 \node[below] at (0,-1.5) {$\mathbf{e_5}$};
 \node[above] at (2,6) {$\mathbf{e_6}$};
 \node[below] at (2,-1.5) {$\mathbf{e_6}$};
 \node[above] at (4,6) {$\mathbf{e_7}$};
 \node[below] at (4,-1.5) {$\mathbf{e_7}$};
 \node[above] at (6,6) {$\mathbf{e_8}$};
 \node[below] at (6,-1.5) {$\mathbf{e_8}$};
 \node[left] at (-1.5,4.5) {\textcolor{black}{$\mathbf{e_1}$}};
 \node[right] at (7.75,0) {\textcolor{black}{$\mathbf{e_1}$}};
 \node[left] at (-1.5,3) {\textcolor{red}{$\mathbf{e_2}$}};
 \node[right] at (7.75,1.5) {\textcolor{red}{$\mathbf{e_2}$}};
 \node[left] at (-1.5,1.5) {\textcolor{blue}{$\mathbf{e_3}$}};
 \node[right] at (7.75,3) {\textcolor{blue}{$\mathbf{e_3}$}};
 \node[left] at (-1.5,0) {\textcolor{purple}{$\mathbf{e_4}$}};
 \node[right] at (7.75,4.5) {\textcolor{purple}{$\mathbf{e_4}$}};
 \draw[fill, color=black] (0,0) circle [radius=.1];
 \draw[fill, color=black] (0,1.5) circle [radius=.1];
 \draw[fill, color=black] (0,3) circle [radius=.1];
 \draw[fill, color=black] (0,4.5) circle [radius=.1];
 \draw[fill, color=black] (2,0) circle [radius=.1];
 \draw[fill, color=black] (2,1.5) circle [radius=.1];
 \draw[fill, color=black] (2,3) circle [radius=.1];
 \draw[fill, color=black] (2,4.5) circle [radius=.1];
 \draw[fill, color=black] (4,0) circle [radius=.1];   
 \draw[fill, color=black] (4,1.5) circle [radius=.1];   
 \draw[fill, color=black] (4,3) circle [radius=.1];
 \draw[fill, color=black] (4,4.5) circle [radius=.1];
 \draw[fill, color=black] (6,0) circle [radius=.1];   
 \draw[fill, color=black] (6,1.5) circle [radius=.1];   
 \draw[fill, color=black] (6,3) circle [radius=.1];
 \draw[fill, color=black] (6,4.5) circle [radius=.1];
 \node[left] at (-.2, .7) {$\color{blue}{a_1}$};
 \node[left] at (-.2, 2.2) {$\color{red}{a_2}$};
 \node[left] at (-.2, 5.25) {$\color{orange}{a_3}$};
 \node[left] at (-.2, -.75) {$\color{orange}{a_3}$};
 \node at (2,.7) {$\color{black}{\underline{\hspace{.13in}}}$};
 \node at (6,.7) {$\color{black}{\underline{\hspace{.13in}}}$};
 \node at (2,3.7) {$\color{black}{\underline{\hspace{.13in}}}$};
 \node at (6,3.7) {$\color{black}{\underline{\hspace{.13in}}}$};
\node[rotate=90, anchor=south] at (2.6,2.25) {$S-a_2$};
\node[rotate=90, anchor=south] at (2.6,.6) {$S-a_1$};
\node[rotate=90, anchor=south] at (2.6,5.25) {$S-a_3$};
\node[rotate=90, anchor=south] at (2.6,-1) {$S-a_3$};
\end{tikzpicture}
\caption{The structure of edge sums in 
a $C_4$-face-magic labeling on $\mathcal{K}_{4,4}$ (Proposition \ref{prop:extendedgesums}).}
\label{fig:extendedges}
\end{figure}

\begin{proof}
  As shown in Figure \ref{fig:extendedges} the goal of this proposition is to show that the edge sums 
  along consecutive horizontal edges alternate between $a_i$ and $S-a_i$, for $1\le i\le 3$, in $\mathcal{K}_{4,4}$.
  We will give a proof of (i). The proofs of (ii) and (iii) are similar. 
  The key idea here is that adjacent $C_4$ cycles share a common edge sum which can be subtracted from each in a way that equates $a_1$ with the remaining edge sum (see Figure \ref{fig:extendedges}). Since $\mathcal{K}_{4,4}$ has a $C_4$-face-magic labeling,
  \begin{align}
      x_{1,1}+x_{1,2} +(x_{2,1}+x_{2,2}) & =S= (x_{2,1} + x_{2,2}) + x_{3,1} + x_{3,2}\hspace{.2in} \Rightarrow \label{extnd1}  \\
      a_1=x_{1,1}+x_{1,2}&=x_{3,1}+x_{3,2}. \notag 
  \end{align}
Similarly, by following the edge sums for $e_1$ and $e_2$ from the lower right corner of the representation for $\mathcal{K}_{4,4}$ to the upper right corner we obtain that 
\begin{align}
      x_{3,1}+x_{3,2} +(x_{4,1}+x_{4,2}) & =S= (x_{4,1} + x_{4,2}) + x_{1,3} + x_{1,4}\hspace{.2in} \Rightarrow \label{extnd2}\\
      a_1=x_{3,1}+x_{3,2}&=x_{1,3}+x_{1,4}. \notag
  \end{align}
Finally, 
\begin{align}
      x_{1,3}+x_{1,4} +(x_{2,3}+x_{2,4}) & =S= (x_{2,3} + x_{2,4}) + x_{3,3} + x_{3,4}\hspace{.2in} \Rightarrow   \label{extnd3}\\
      a_1=x_{1,3}+x_{1,4}&=x_{3,3}+x_{3,4}.  \notag
  \end{align}
Solving for the parenthetical expression on the left side of 
the top equations in
(\ref{extnd1}), (\ref{extnd2}), and (\ref{extnd3}) 
together with the bottom equations involving $a_1$, respectively, imply that 
\begin{align*}
    S-a_1&=S-(x_{1,1}+x_{1,2})=x_{2,1} + x_{2,2}\\
         &=S-(x_{3,1}+x_{3,2})=x_{4,1}+x_{4,2}\\
         &=S-(x_{1,3}+x_{1,4})=x_{2,3}+x_{2,4}.
\end{align*}
Also, $x_{4,3}+x_{4,4}=S-(x_{3,3}+x_{3,4})=S-a_1$.
Next, we have
\begin{align*}
    2a_1 &= (x_{1,1} + x_{1,2}) + (x_{1,3} + x_{1,4}) \\
    &= (x_{1,2} + x_{1,3}) + (x_{1,1} + x_{1,4}) = a_2 + a_3.
\end{align*}
Finally, for $1\le i,j\le 2$,
\begin{align*}
    x_{2i-1,2j-1} + x_{2i-1,2j} &= a_1, \text{ and } \\
    x_{2i,2j-1} + x_{2i,2j} &= S - a_1.
\end{align*}
Thus, for $1\le i\le 4$,
\begin{equation*} 
    x_{i,1} + x_{i,2} = x_{i,3} + x_{i,4}.
\end{equation*}
\end{proof}

In Lemma \ref{lem:VerticalCycleOps}, we show that the
an elementary vertical cycle operation transforms a
properly symmetrized $C_4$-face-magic labeling on $\mathcal{K}_{4,4}$ into another
properly symmetrized $C_4$-face-magic labeling on $\mathcal{K}_{4,4}$.

\begin{lemma} \label{lem:VerticalCycleOps}
Let $X=\{ x_{i,j} : 1\le i,j\le 4\}$ on $\mathcal{K}_{4,4}$ 
be a properly symmetrized $C_4$-face-magic labeling on $\mathcal{K}_{4,4}$.
Then, for the elementary vertical cycle operation $\widehat{V}_k$, where $k\in\{2,3,4\}$,
$Y=\widehat{V}_k(X)$ is a properly symmetrized 
   $C_4$-face-magic labeling  on $\mathcal{K}_{4,4}$.
  

     Moreover, no sequence of elementary vertical cycle operations are 
    induced by any of the symmetries in
    $\mathit{KBLS}(4)$ given in Definition \ref{defn:KBSymmetries}.
\end{lemma}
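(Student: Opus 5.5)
The plan is to read every face of $\mathcal{K}_{4,4}$ as the union of two ``horizontal pairs'' and use the pair-sum structure of Proposition \ref{prop:extendedgesums}. By the definition of $\mathcal{K}_{4,4}$, each $C_4$-face has one of three forms:
\begin{itemize}
\item[(a)] $\{(i,j),(i,j+1),(i+1,j),(i+1,j+1)\}$ with $1\le i\le 3$, $1\le j\le 3$;
\item[(b)] $\{(i,4),(i,1),(i+1,4),(i+1,1)\}$ with $1\le i\le 3$, the wrap in the second coordinate;
\item[(c)] the twisted faces $\{(4,j),(4,j+1),(1,5-j),(1,4-j)\}$, indices read cyclically mod $4$ in the second coordinate.
\end{itemize}
In every case the face sum is a sum of two terms of the form $x_{i,j}+x_{i,j+1}$, where $j,j+1$ are cyclically consecutive columns in a single row $i$. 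So it suffices to show that $\widehat{V}_k$ preserves every such consecutive pair sum in row $k$; rows other than $k$ are untouched.

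For the first claim, the reversal $j\mapsto 5-j$ sends the cyclically consecutive pair $\{j,j+1\}$ in row $k$ to the pair $\{4-j,5-j\}$. Concretely, $\{1,2\}\mapsto\{3,4\}$, $\{3,4\}\mapsto\{1,2\}$, $\{2,3\}\mapsto\{2,3\}$ and $\{4,1\}\mapsto\{4,1\}$. The last two are preserved as sets, so their sums are unchanged. For the first two, equation (\ref{eqn:SameEdgeSum}) of Proposition \ref{prop:extendedgesums} gives $x_{k,1}+x_{k,2}=x_{k,3}+x_{k,4}$, so swapping them also leaves the sums unchanged. Hence every face sum in $Y=\widehat{V}_k(X)$ equals the corresponding face sum in $X$, namely $S=34$. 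Since $\widehat{V}_k$ only permutes entries, $Y$ is still a bijection onto $\{1,\dots,16\}$, so $Y$ is a $C_4$-face-magic labeling.

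Proper symmetrization then follows directly. Because $k\neq 1$, we have $y_{1,1}=x_{1,1}=1$. For $k\in\{2,4\}$ the reversal swaps positions $2$ and $3$ of row $k$, so $\min\{y_{k,2},y_{k,3}\}=\min\{x_{k,2},x_{k,3}\}$. For $k=3$, rows $2$ and $4$ are untouched. Thus condition (2) of Definition \ref{Defn:ProperlySymmetrized} persists.

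For the ``moreover'' part, I will argue via the uniqueness in Proposition \ref{prop:LabelEquiv}. Let $Y$ be obtained from $X$ by a sequence of elementary vertical cycle operations. Since each $\widehat{V}_k$ is an involution and the operations on different rows commute, we may assume each of $\widehat{V}_2,\widehat{V}_3,\widehat{V}_4$ is applied at most once. Both $X$ and $Y$ are properly symmetrized by the first part. If $Y$ were Klein bottle labeling equivalent to $X$, then the uniqueness statement of Proposition \ref{prop:LabelEquiv} would force $Y=X$. But since the labels are distinct, reversing a row $k$ always changes the entry in position $(k,1)$, because $x_{k,1}\neq x_{k,4}$. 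So any nontrivial reduced sequence gives $Y\neq X$. Hence no nontrivial sequence of elementary vertical cycle operations is realized by an element of $\mathit{KBLS}(4)$.

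As a cross-check, I would also verify this directly on the group. Any element of $\mathit{KBLS}(4)$ agreeing with a composition of $\widehat{V}_k$'s must fix row $1$ pointwise and map each row to itself. Among the elements $E^iH^jN^kV^\ell$, only those with $E$-exponent $0$ preserve rows; $H$, $N$ and $HN$ move points of row $1$; and $V$ swaps rows $2$ and $4$. So only the identity survives.

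The main obstacle I expect is the bookkeeping of the twisted faces (c): one must check that the row-$1$ pair matched with a row-$4$ pair is again cyclically consecutive, so the pair-sum argument applies. Since $\widehat{V}_k$ never touches row $1$, only the row-$4$ side matters there, and that is covered by the same pair-sum analysis.
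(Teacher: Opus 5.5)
Your proof is correct. The first half follows the paper's argument. The paper also shows that reversing vertical cycle $k$ preserves every consecutive sum $x_{k,j}+x_{k,j+1}$: trivially for $j\in\{2,4\}$, and by \eqref{eqn:SameEdgeSum} for $j\in\{1,3\}$. It likewise shows that $\{y_{i,2},y_{i,3}\}=\{x_{i,2},x_{i,3}\}$. Your three-type list of faces is more careful than the paper, which writes out only the untwisted faces between cycles $k-1$, $k$, $k+1$ and never addresses the twisted faces joining cycle $4$ to cycle $1$.

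For the ``moreover'' part your main route is genuinely different. The paper works with the symmetry directly. If $\varphi(X)=\psi(X)$ with $\psi\in\mathit{KBLS}(4)$, distinctness of labels forces $\psi$ to fix $(1,1)$ and $(1,2)$. Then $\psi$ sends the face $F_1$ on that edge either to itself, giving $\psi=I$, or to the other face $F_2$, giving $\psi=V$. The case $\psi=V$ is excluded because $V$ carries the label set of cycle $2$ onto that of cycle $4$, while $\varphi$ preserves each cycle's label set.

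You instead get the conclusion in two lines from the uniqueness clause of Proposition \ref{prop:LabelEquiv} together with $x_{k,1}\neq x_{k,4}$. This is shorter, and it states the result cleanly: $\varphi(X)$ is not equivalent to $X$ for any nontrivial reduced $\varphi$. The paper stops at $\psi=I$ and leaves that last step implicit. The price is that the real content is outsourced. Uniqueness in Proposition \ref{prop:LabelEquiv} rests on the fact that $V$ is the only nontrivial element of $\mathit{KBLS}(4)$ fixing $(1,1)$. That fact is true, since the group has order $32$ and acts transitively on the $16$ vertices, but the paper's proof of that proposition merely asserts uniqueness. So your main argument is only as solid as that assertion.

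Your ``cross-check'' through the normal form $E^iH^jN^kV^\ell$ is the self-contained version. It is essentially equivalent to the paper's face-rigidity argument, and it would serve better as the primary proof.
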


\begin{proof}
We first show that $Y=\widehat{V}_k(X)$ is a $C_4$-face-magic labeling on $\mathcal{K}_{4,4}$.
Since the labels on the vertices $(k,j)$, for $1\le j\le 4$,
are the only ones that are changed, we need to only check
the face sums on faces incident to these vertices.
We have
\begin{equation*}
    y_{k,j} + y_{k,j+1} + y_{k+1,j} + y_{k+1,j+1}=
    x_{k,5-j} + x_{k,5-(j+1)} + x_{k+1,j} + x_{k+1,j+1}/
\end{equation*}
We first observe that
\begin{equation*}
    x_{k,5-j} + x_{k,5-(j+1)} = x_{k,j} + x_{k,j+1}
    \text{ \ for } j\in\{2,4\},
\end{equation*}
where the indices are taken modulo 4.
By \eqref{eqn:SameEdgeSum} of Proposition \ref{prop:extendedgesums},
\begin{equation*}
    x_{k,5-j} + x_{k,5-(j+1)} = x_{k,j} + x_{k,j+1}
    \text{ \ for } j\in\{1,3\}.
\end{equation*}
Hence,
\begin{equation*}
    y_{k,j} + y_{k,j+1} + y_{k+1,j} + y_{k+1,j+1}=
    x_{k,j} + x_{k,j+1} + x_{k+1,j} + x_{k+1,j+1}=S.
\end{equation*}
A similar argument shows that
\begin{equation*}
    y_{k-1,j} + y_{k-1,j+1} + y_{k,j} + y_{k,j+1}=S.
    \end{equation*}

    We observe that $y_{i,j}=x_{i,j}$ for $i\ne k$, and $y_{k,j}=x_{k,5-j}$. Thus,
    \begin{equation*}
        \{y_{i,2}, y_{i,3}\}=\{ x_{i,2}, x_{i,3}\}
 \text{ \ for } 1\le i \le 4.    \end{equation*}
    Hence, 
    \begin{itemize}
        \item $y_{1,1}=x_{1,1}=1$ and
        \item $\min\{y_{2,2},y_{2,3}\} = \min\{x_{2,2},x_{2,3}\}
        < \min\{x_{4,2},x_{4,3}\} = \min\{y_{4,2},y_{4,3}\}$.
    \end{itemize}
    Therefore, $Y$ is a properly symmetrized $C_4$-face-magic
    labeling on $\mathcal{K}_{4,4}$.

    Let $\varphi$ be the composition of a sequence of elementary vertical cycle operations such that 
    $\varphi(X)=\psi(X)$ for some $\psi\in\mathit{KBLS}(4)$.
    We will show that $\psi=I$.
    Since every elementary vertical cycle operation fixes the labels 
    on the vertices $(1,1)$ and $(1,2)$, the graph isomorphism
    $\psi$ fixes the vertices $(1,1)$ and $(1,2)$ on $\mathcal{K}_{4,4}$.
    Let $F_1$ be the face with incident 
    vertices $(1,1)$, $(1,2)$, $(2,1)$ and $(2,2)$, and let $F_2$ be the face
    with incident vertices $(1,1)$, $(1,2)$, $(4,3)$ and $(4,4)$.
    Note that  $\psi$ is the identity map on the edge from $(1,1)$ to $(1,2)$,
    and $\psi$ sends $F_1$ to either $F_1$ or $F_2$.
    If $\psi$ sends $F_1$ to $F_2$, then $\psi=V$ on $\mathcal{K}_{4,4}$.
    So, $\psi=V$ sends the labels on column 2 of $\mathcal{K}_{4,4}$ to those on column 4.
    Since elementary cycle operations send the labels each column to itself,
    $\psi$ sends the labels on column 2 to itself; a contradiction.
    Thus, $\psi$ sends $F_1$ to $F_1$ by the identity map. Hence, $\psi=I$.
\end{proof}

In the next proposition, we observe that for every 
properly symmetrized $C_4$-face-magic labeling on
$\mathcal{K}_{4,4}$, there is a unique standard 
$C_4$-face-magic labeling that is vertical cycle 
equivalent to it.

\begin{proposition} \label{prop:StandardLabel}
    Given a properly symmetrized $C_4$-face-magic labeling $X$ on
    $\mathcal{K}_{4,4}$, there exists a unique
    standard
    $C_4$-face-magic labeling $Y$ on
    $\mathcal{K}_{4,4}$ that is 
    vertical cycle equivalent to $X$.
\end{proposition}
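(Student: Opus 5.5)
The plan is to use the fact that the elementary vertical cycle operations $\widehat{V}_2,\widehat{V}_3,\widehat{V}_4$ pairwise commute and are involutions. Each acts only on its own row $k$ (the vertices $(k,j)$, $1\le j\le 4$), and on that row it applies the reversal $j\mapsto 5-j$. So the vertical cycle equivalence class of a properly symmetrized labeling $X$ is exactly the orbit
\[
\{\widehat{V}_2^{\epsilon_2}\widehat{V}_3^{\epsilon_3}\widehat{V}_4^{\epsilon_4}(X) : \epsilon_2,\epsilon_3,\epsilon_4\in\{0,1\}\}.
\]
By Lemma \ref{lem:VerticalCycleOps}, every element of this orbit is again a properly symmetrized $C_4$-face-magic labeling. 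The problem therefore reduces to showing that exactly one choice of $(\epsilon_2,\epsilon_3,\epsilon_4)$ produces a standard labeling.

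The key observation is that $\widehat{V}_k$ swaps the entries in positions $(k,1)$ and $(k,4)$, since $5-1=4$, and leaves every other row unchanged. Hence:
\begin{itemize}
\item the comparison between $y_{k,1}$ and $y_{k,4}$ depends only on $\epsilon_k$;
\item toggling $\epsilon_k$ reverses that comparison.
\end{itemize}
The inequality is always strict, because the labeling is a bijection and so $x_{k,1}\neq x_{k,4}$.

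Existence then proceeds row by row, treating each condition of Definition \ref{defn:StandardLabel} separately.
\begin{itemize}
\item For $k=2$, set $\epsilon_2=0$ if $x_{2,1}>x_{2,4}$ and $\epsilon_2=1$ otherwise; this secures condition (1).
\item For $k=3$, set $\epsilon_3=0$ if $x_{3,1}<x_{3,4}$ and $\epsilon_3=1$ otherwise; this secures condition (2).
\item For $k=4$, choose $\epsilon_4$ in the same way as for $k=2$; this secures condition (3).
\end{itemize}
Since the three choices concern disjoint rows, they do not interfere with one another, and the resulting $Y$ is standard.

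For uniqueness, suppose $Y$ and $Y'$ are both standard and vertical cycle equivalent to $X$. Then $Y'=\widehat{V}_2^{\delta_2}\widehat{V}_3^{\delta_3}\widehat{V}_4^{\delta_4}(Y)$ for some choice of exponents $\delta_k\in\{0,1\}$. If some $\delta_k=1$, the strict inequality in row $k$ is reversed in $Y'$, so $Y'$ would violate the corresponding condition of Definition \ref{defn:StandardLabel}. Hence all $\delta_k=0$ and $Y'=Y$.

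The only point requiring some care is the reduction of arbitrary sequences of elementary operations to the eight products above, which is where commutativity and the involution property are used. One might also want to confirm that distinct exponent vectors give distinct labelings. This is not actually needed for uniqueness: the argument above already shows directly that any nontrivial product breaks standardness. I expect no real obstacle beyond this bookkeeping.
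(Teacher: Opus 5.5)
Your proof is correct and is essentially the paper's argument. The paper only asserts that exactly one of the eight properly symmetrized labelings $\widehat{V}_2^{i}\widehat{V}_3^{j}\widehat{V}_4^{k}(X)$ is standard, and you supply the missing justification: the $\widehat{V}_k$ are commuting involutions, and $\widehat{V}_k$ reverses only the strict comparison between the labels at $(k,1)$ and $(k,4)$ (your ``row $k$'' $\{(k,j):1\le j\le 4\}$ is what the paper draws as a column, which does not affect the argument).
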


\begin{proof}
 Apply the elementary vertical cycle operations on $X$
 that are needed to obtain a standard labeling.
 Among the eight properly symmetrized labelings that
 are vertical cycle equivalent to $X$,
 only one labeling is a standard labeling.
\end{proof}

We illustrate the use of Proposition 
\ref{prop:StandardLabel} in Example \ref{ex:StandardLabeling}.

\begin{example} \label{ex:StandardLabeling}
    Figure \ref{fig:standardlabeling} illustrates the 
    process of applying elementary vertical cycle 
    operations to a properly symmetrized $C_4$-face magic 
    labeling on $\mathcal{K}_{4,4}$ to obtain
    a standard $C_4$-face magic labeling.

\begin{figure}[hbt!]
\centering

    \begin{tikzpicture}
     \node at (0, 5)
    {\begin{tabular}{|c|c|c|c|} \hline
        7  &\textcolor{red}{$\mathbf{16}$}  &\textcolor{purple}{$\mathbf{5}$}   &\textcolor{brown}{$\mathbf{14}$}  \\  \hline
        2  &\textcolor{blue}{$\mathbf{9}$}   &4   &\textcolor{blue}{$11$}  \\  \hline
        8  &\textcolor{blue}{15}   &6   &\textcolor{blue}{13}  \\  \hline
        \textcolor{orange}{1}  &\textcolor{red}{10}  &\textcolor{purple}{3}   &\textcolor{brown}{12}  \\  \hline
    \end{tabular}};
    \node[right] at (2, 5.8) {$y_{(2,1)}\ngtr y_{(2,4)}$};
    \node[right] at (2, 5.1) {$y_{(3,1)}<y_{(3,4)}$};
    \node[right] at (2, 4.4) {$y_{(4,1)}\ngtr y_{(4,4)}$};
    \node[left] at (-2, 5) {$Y :$}; 
\node at (0,2.5)
{\begin{tabular}{|c|c|c|c|} \hline
        7  &  \textcolor{red}{10} &\textcolor{purple}{$\mathbf{5}$}   &\textcolor{brown}{$\mathbf{14}$}  \\  \hline
        2  & \textcolor{blue}{15}  &4   &\textcolor{blue}{$11$}  \\  \hline
        8  &  \textcolor{blue}{$\mathbf{9}$}  &6   &\textcolor{blue}{13}  \\  \hline
        \textcolor{orange}{1}  &  \textcolor{red}{$\mathbf{16}$}   &\textcolor{purple}{3}   &\textcolor{brown}{12}  \\  \hline
    \end{tabular}};
    \node[left] at (-2, 2.5) {$Z=\widehat{V}_2(Y):$}; 
    \node[right] at (2, 3.2) {$z_{(2,1)}> z_{(2,4)}$};
    \node[right] at (2, 2.5) {$z_{(3,1)}<z_{(3,4)}$};
    \node[right] at (2, 1.8) {$z_{(4,1)}\ngtr z_{(4,4)}$};
    \node[left] at (-2, 5) {$Y :$};
    \node at (0,0)
       {\begin{tabular}{|c|c|c|c|} \hline
        7  &\textcolor{red}{10}  &\textcolor{purple}{$\mathbf{5}$}   &\textcolor{brown}{12}  \\  \hline
        2  &\textcolor{blue}{15}   &4   &\textcolor{blue}{13}  \\  \hline
        8  &\textcolor{blue}{$\mathbf{9}$}   &6   &\textcolor{blue}{11}  \\  \hline
        \textcolor{orange}{1}  &\textcolor{red}{$\mathbf{16}$}  &\textcolor{purple}{3}   &\textcolor{brown}{$\mathbf{14}$}  \\  \hline
    \end{tabular}};
 \node[left] at (-2, .1) {$X=\widehat{V}_4(Z):$}; 
    \node[right] at (2, .5) {$x_{(2,1)}>x_{(2,4)}$};
    \node[right] at (2, -.2) {$x_{(3,1)}<x_{(3,4)}$};
    \node[right] at (2, -.9) {$x_{(4,1)}>x_{(4,4)}$};
    \end{tikzpicture}

\caption{A sequence of vertical cycle operations that transforms 
the properly symmetrized $C_4$-face-magic labeling $Y$ into the
standard labeling $X$.}
\label{fig:standardlabeling}
\end{figure}

\end{example}

\section{Main Results}

In this section we want to characterize the standard 
$C_4$-face-magic labelings on $\mathcal{K}_{4,4}$.

In Proposition \ref{prop:PermissibleA1Values}, we
determine the permissible values of $a_1$ that
are imposed by the structure of edge sums in a $C_4$-face-magic labeling on $\mathcal{K}_{4,4}$ given in
Proposition \ref{prop:extendedgesums}.

\begin{proposition} \label{prop:PermissibleA1Values}
     Suppose $X=\{ x_{i,j} : (i,j) \in V(\mathcal{K}_{4,4})\}$ 
  is a properly symmetrized $C_4$-face-magic labeling on $\mathcal{K}_{4,4}$.
  Then the only permissible values of $a_1$ 
  are $a_1\in\{9,13,15,16,17\}$.
  Furthermore, for each permissible value of $a_1$,
  the corresponding permissible values of $a_2$ are given below.
  \begin{itemize}
    \item If $a_1=9$, then $a_2\in\{10,11,13\}$.
    \item If $a_1=13$, then $a_2\in\{14,15,21\}$.
    \item If $a_1=15$, then $a_2\in\{16,19,23\}$.
    \item If $a_1=16$, then $a_2\in\{18,20,24\}$.
    \item If $a_1=17$, then $a_2\in\{18,19,21,25\}$.
  \end{itemize}
\end{proposition}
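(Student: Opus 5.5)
The plan is to use Proposition \ref{prop:extendedgesums} to write every label in terms of $a_1$, $a_2$ and one free parameter per row. Then the condition that the labels are exactly $\{1,\dots,16\}$ becomes a finite combinatorial partition problem, which can be settled by cases on $a_1$ and $d=a_2-a_1$.

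\textbf{Step 1 (parametrizing the rows).} By Lemma \ref{lem1}, $S=34$. By Proposition \ref{prop:extendedgesums}, in each odd row the cyclically consecutive sums are $x_{i,1}+x_{i,2}=a_1$, $x_{i,2}+x_{i,3}=a_2$, $x_{i,3}+x_{i,4}=a_1$ and $x_{i,4}+x_{i,1}=a_3=2a_1-a_2$. In each even row the corresponding sums are $34-a_1$, $34-a_2$, $34-a_1$, $34-a_3$. Hence row $i$ is determined by $t_i=x_{i,j}$ for one fixed $j$. Put $d=a_2-a_1$. An odd row then consists of the values $t$, $a_1-t$, $t+d$, $a_1-t-d$. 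An even row consists of $t$, $34-a_1-t$, $t-d$, $34-a_1-t+d$. Since $x_{1,1}=1$, row $1$ is $1,\ a_1-1,\ a_1-1+d-(a_1-1)+1\,$; explicitly $\{1,\ a_1-1,\ d+1,\ a_1-d-1\}$ up to the exact placement.

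\textbf{Step 2 (structural reformulation).} Each row splits into two pairs whose entries differ by $|d|$, so $\{1,\dots,16\}$ must be partitioned into $8$ pairs of the form $\{s,s+|d|\}$. Moreover, the eight labels in odd rows form a set closed under $x\mapsto a_1-x$. The eight labels in even rows form a set closed under $x\mapsto 34-a_1-x$, and since the two sets are complementary this second condition is equivalent to closure of the odd set under $x\mapsto 17+\ldots$; this should be checked directly.

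This gives immediate bounds: $d\neq 0$ (labels are distinct), $2\le a_1-1\le 16$, and since $1$ and $a_1-1$ lie in the same row, $a_1\le 17$. The numbers $1,\dots,|d|$ must each be the smaller element of a difference-$|d|$ pair. Combined with the pairing, this forces $16$ to be divisible into blocks in a way that restricts $|d|$ to values such as $1,2,4,8$, or values where the blocks tile $\{1,\dots,16\}$. I would first establish this tiling lemma: the pairs $\{s,s+|d|\}$ must tile $\{1,\dots,16\}$, so the residues mod $|d|$ behave like intervals of even length.

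\textbf{Step 3 (case analysis).} For each admissible $|d|$ and each $a_1\in\{3,\dots,17\}$, determine whether the pairs can be grouped into two odd rows, closed under reflection about $a_1/2$, and two even rows, closed under reflection about $(34-a_1)/2$, with all values distinct and $1$ in row $1$. The sign of $d$ is fixed by the convention $x_{1,1}=1$ together with proper symmetrization (and, if needed, by the positions in row $1$). I expect this to leave exactly $a_1\in\{9,13,15,16,17\}$ with the listed $a_2=a_1+d$. For example, $a_1=17$ should allow $d\in\{1,2,4,8\}$, giving $a_2\in\{18,19,21,25\}$. I would organize the check by the value of $|d|$, since the tiling constraint makes each case small.

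\textbf{Main obstacle.} The hardest part is Step 3: ruling out every other pair $(a_1,a_2)$ cleanly, not by brute listing. I would first try to exploit the two reflection symmetries. Their composition is translation by $34-2a_1$ on the odd set versus the even set, and together with the difference-$|d|$ pairing this should give divisibility conditions linking $a_1$ and $d$ (for instance, relating $17-a_1$ to $d$). Any residual cases would be eliminated by hand, by trying to place $2$ and $16$.
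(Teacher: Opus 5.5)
Your parametrization of the rows is correct, and it is a genuinely different route from the paper's. The paper determines $a_1$ first. It uses only the fact that $\{1,\dots,16\}$ splits into four pairs with sum $a_1$ and four with sum $34-a_1$; forced or impossible pairings then exclude $10,11,12,14$. For each surviving $a_1$, the paper tests every candidate $x_{1,3}$ by asking whether the remaining pairs can supply the other cross sums equal to $a_2$ (for $a_1\ne 17$, this is the sum $x_{3,2}+x_{3,3}$). Your difference-$d$ tiling is a real gain: it explains why $a_2-a_1$ is a power of two, which the paper only reads off its tables in Theorems~\ref{binaryhd_ks} and~\ref{dksvertical}.

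As written, however, the proposal does not prove the statement. Step~3 is the entire content of the proposition, and you replace it with ``I expect this to leave exactly\ldots'' plus unspecified divisibility arguments. The complementarity remark in Step~2 is also wrong as stated. The map $x\mapsto 34-a_1-x$ does not preserve $\{1,\dots,16\}$ unless $a_1=17$, so closure of the even-row labels does not become a clean closure property of the odd-row labels. You should keep both conditions.

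The missing idea, which turns Step~3 into a short check, is that the tiling is \emph{unique} and that your formulas describe each row as two tiles swapped by a reflection.
\begin{itemize}
\item A pair $\{s,s+D\}$ joins consecutive terms of one residue class mod $D$, so each class must have even size. Write $16=qD+r$ with $0\le r<D$; the classes have sizes $q$ and $q+1$. This forces $r=0$ and $q$ even, i.e.\ $D\in\{1,2,4,8\}$. The tiling $T_D$ is then unique, because a path has at most one perfect matching.
\item The sign of $d$ needs no symmetrization argument: $x_{1,3}=1+d\ge 2$ gives $d\ge 1$.
\item Put $r_1(x)=a_1-x$ and $r_2(x)=34-a_1-x$. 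Your formulas say that an odd row is $P\cup r_1(P)$ and an even row is $Q\cup r_2(Q)$, for tiles $P\ne r_1(P)$ and $Q\ne r_2(Q)$ of $T_d$. Row~1 uses $P=\{1,1+d\}$.
\end{itemize}
So for each $d\in\{1,2,4,8\}$ you only need the values of $a_1$ for which the eight tiles of $T_d$ split into two $r_1$-swapped pairs, one of them containing $\{1,1+d\}$, and two $r_2$-swapped pairs. For instance, for $d=1$ the tiles are $\{2k-1,2k\}$, so $a_1$ must be odd. Then $a_1=11$ fails, since $\{5,6\}$ is fixed by $r_1$ while $r_2(\{5,6\})=\{17,18\}$.

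Doing this for all four values of $d$ gives:
\begin{itemize}
\item $a_1\in\{9,13,15,17\}$ for $d=1$;
\item $a_1\in\{9,13,16,17\}$ for $d=2$;
\item $a_1\in\{9,15,16,17\}$ for $d=4$;
\item $a_1\in\{13,15,16,17\}$ for $d=8$.
\end{itemize}
This is exactly the list in the statement. Moreover, the row sums alone force every face sum to be $34$, so each listed pair $(a_1,a_2)$ is actually realized.
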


\begin{proof}
     By Proposition \ref{prop:extendedgesums},
    the collection
\begin{equation}\label{partition1to16}
    \big\{ \{x_{2i-1,2j-1}, x_{2i-1,2j}\} : 1\le i,j\le 2\big\}
    \cup
     \big\{ \{x_{2i,2j-1}, x_{2i,2j}\} : 1\le i,j\le 2\big\}
\end{equation}
is a partition of the set $\{1,2,\ldots,16\}$ such that
$x_{2i-1,2j-1} + x_{2i-1,2j} = a_1$ and
$x_{2i,2j-1} + x_{2i,2j} =S- a_1$.
Thus, we need to partition the set $\mathcal{C}=\{1,2,\ldots,16\}$ into eight 2-element sets of distinct elements such that
  \begin{enumerate}
    \item[(i)]  the sum of the elements of a set is $a_1$ for four of the 2-element sets, and
    \item[(ii)]  the sum of the elements of a set  is $S-a_1$ for the other four 2-element sets.
  \end{enumerate}
   Note that $a_1\ge 9$ since the smallest positive integer which can be written as a sum of 2 distinct positive integers in exactly four different ways is $9$. Recall that $x_{1,1}=1$. Since $a_1=x_{1,1}+x_{1,2}=1+x_{1,2}$\ and $x_{1,2}\le 16$, then $a_1\le 17$.
  Thus the only possible values for $a_1$ are $9,10,11,12,13,14,15,16$ and $17$.\\
  \hspace{.1in}\\
  Consider the proof that $a_1$ can equal $9$.
  Since $9=1+8=2+7=3+6=4+5$, we will form four distinct two element sets with the integers in each of these four sums: $\{1,8\}, \{2,7\}, \{3, 6\}, \{4,5\}$; which satisfies (i). When these pairs are removed from $\mathcal{C}$ the remaining integers are 
  $9,10,11,12,13,14, 15$ and $16$. There is precisely one way to form four pairs of these integers such that their sum is $34-9=25$, which is $\{9, 16\}, \{10, 15\}, \{11, 14 \}$ and $ \{12, 13\}$; which satisfies (ii). Thus, $a_1$ can equal $9$.\\
  \hspace{2in}\\
  Similarly, as demonstrated in Figure \ref{permissiblevaluesa1} we can partition $\mathcal{C}$ so that (i) and (ii) hold for $a_1=13, 15, 16, 17$.\\
  \hspace{2in}\\

\begin{figure}[hbt!]
    \centering
\renewcommand{\arraystretch}{1.6}
\begin{tabular}{|c|c|c|c|} \hline
$a_1$  &  \text{Pairs in $\mathcal{C}$ that sum to $a_1$} &  $S-a_1$ & \text{Pairs in $\mathcal{C}$ that sum to $S-a_1$}\\  \hline
 $9$  &  $\{1,8\}, \{ 2, 7 \}, \{ 3, 6 \}, \{ 4, 5  \}$  & $25$ & $\{9,16\}, \{10, 15\}, \{  11, 14\}, \{ 12, 13 \}$ \\ \hline
 $13$ & $\{1,12\}, \{ 2, 11 \}, \{ 3, 10\}, \{ 4, 9  \}$ & $21$  & $\{5,16\}, \{ 6, 15 \}, \{ 7, 14\}, \{ 8, 13  \}$\\  \hline
 $15$ &  $\{1,14\}, \{ 2, 13 \}, \{ 5, 10 \}, \{ 6, 9  \}$ &  $19$  &  $\{7,12\}, \{ 8, 11 \}, \{ 3, 16 \}, \{ 4, 15  \}$ \\  \hline
 $16$  &  $\{1,15\}, \{ 3, 13 \}, \{ 5, 11\}, \{ 7, 9  \}$  &  $18$  & $\{2,16\}, \{ 4, 14 \}, \{ 6, 12 \}, \{ 8, 10  \}$  \\   \hline
 $17$ & $\{1,16\}, \{ 2, 15 \}, \{ 3, 14 \}, \{ 4, 13  \}$ & $17$  &  $\{5,12\}, \{ 6, 11 \}, \{ 7, 10\}, \{ 8, 9  \}$ \\ \hline
\end{tabular}
\caption{The four pairs of elements in $\mathcal{C}$ that sum to $a_1$ and $S-a_1$.}
    \label{permissiblevaluesa1}
\end{figure}
 We can eliminate $10, 11, 12$, and $14$ as possible values for $a_1$ as follows.
 Note that $a_1\neq 10$ as follows. Since we would be required to set $S-a_1=24$ if $a_1=10$, there is no integer in $\mathcal{C}$ which differs from $5$ that can be added to $5$ to total $10$ or $24$ since only $5+5=10$ and $5+19=24$.\\
  \hspace{2in}\\
To show that $a_1\neq 11$ we will establish a contradiction by assuming $a_1=11$. The pairs of distinct elements in $\mathcal{C}$ that sum to $a_1=11$ are 
\begin{equation}
    \{1, 10\}, \{2, 9  \}, \{ 3, 8 \}, \{ 4, 7  \}, \text{and}\ \{5, 6\};   \label{pairsumsa1}
\end{equation}
and the pairs of elements in $\mathcal{C}$ that sum to $S-a_1=23$ are  
\begin{equation}
   \{7, 16\}, \{8 ,15\}, \{9, 14\}, \{10, 13\}, \text{and}\ \{11, 12\}. \label{pairsumsS-a1} 
\end{equation} 
By \eqref{partition1to16} we need to choose four pairs from \eqref{pairsumsa1} and four pairs from \eqref{pairsumsS-a1} which when united form a partition of $\mathcal{C}$. Since $\{1, 10\}$ is the only pair with $1$ and $\{10, 13\}$ is the only pair with $13$ we need to include each of these pairs in the partition of $\mathcal{C}$. However, $\{1,10\}\cap \{10,13\}=\{10\}$, which contradicts the requirement that subsets of a partition are disjoint. Therefore $a_1\neq 11$. By similar reasoning, $a_1\neq 12, 14$.\\

We will now determine the permissible values for the edge sum $a_2$ for $a_1=9$. 
Since $x_{1,2}=8$ and $x_{1,3}\ge 2$ then $a_2=x_{1,2}+x_{1,3}\ge 10$. 
Note that since $x_{1,3}+x_{1,4}=9$ and $x_{1,4}\ge 2$ then $x_{1,3}\le 7$. 
Thus, $a_2=x_{1,2}+x_{1,3}\le 15$; so $a_2$ can potentially be $10, 11, 12, 13, 14$, or $15$.\\ 
\hspace{2in}\\
Of the sets $\{1,8\}$
$\{2,7\}$, $\{3,6\}$ and $\{4,5\}$,
let $A_1=\{1,8\}$, choose $A_2$ to be the set   
such that $x_{1,3}\in A_2$, and let $A_3$ and $A_4$ be the remaining two sets.
By Proposition \ref{prop:extendedgesums},
there are labels $y\in A_3$ and $z\in A_4$
such that $y+z=a_2$.
We test this criterion for the 
possible values of the label $x_{1,3}$ from
the set $\{2,3,4,5,6,7\}$.
In Figure  \ref{establish_a2_values}, we observe that
this criterion is only satisfied for $x_{1,3}\in \{2,3,5\}$.
Thus, $a_2\in\{10,11,13\}$. 
In Figure  \ref{eliminate_a2_values}, we see that 
this criterion is not satisfied for $x_{1,3}\in \{4,6,7\}$.
Thus, $a_2\not\in\{12,14,15\}$. 

\begin{figure}[hbt!]
    \centering
\renewcommand{\arraystretch}{1.5}
\begin{tabular}{|c|c|c|c|c|c|} \hline
$A_1$ & $A_2$  & $\textcolor{red}{x_{1,2}}+\textcolor{red}{x_{1,3}}=a_2$ : & $A_3$ &  $A_4$ & $\textcolor{red}{y}+\textcolor{red}{z} = a_2\ :$\\ 
      &            &   $\textcolor{red}{x_{1,2}}\in A_1$\ ,\ $\textcolor{red}{x_{1,3}}\in A_2$                                      &      &    &
$\textcolor{red}{y}\in A_3$\ ,\ $\textcolor{red}{z}\in A_4$ \\  \hline
$\{1, \textcolor{red}{8}\}$ & $\{\textcolor{red}{2}, 7\}$ & $\textcolor{red}{8}+\textcolor{red}{2}=10$ & $\{ 3,\textcolor{red}{6}\}$ &  $\{ \textcolor{red}{4}, 5  \}$ &  $\textcolor{red}{6}+\textcolor{red}{4}=10$\\  \hline
$\{1,\textcolor{red}{8}\}$ & $\{\textcolor{red}{3}, 6\}$ & $\textcolor{red}{8}+\textcolor{red}{3}=11$ & $\{ 2,\textcolor{red}{7}\}$ &  $\{ \textcolor{red}{4}, 5  \}$ &  $\textcolor{red}{7}+\textcolor{red}{4}=11$\\  \hline
$\{1, \textcolor{red}{8}\}$ & $\{4, \textcolor{red}{5}\}$ & $\textcolor{red}{8}+\textcolor{red}{5}=13$ & $\{ 2,\textcolor{red}{7}\}$ &  $\{ 3, \textcolor{red}{6}  \}$ &  $\textcolor{red}{7}+\textcolor{red}{6}=13$\\  \hline
\end{tabular}
    
    \caption{Using a criterion to establish $10, 11$, and $13$ as possible values for $a_2$ when $a_1=9$.}
    \label{establish_a2_values}
\end{figure}

\begin{figure}[hbt!]
\centering
\renewcommand{\arraystretch}{1.5}
\begin{tabular}{|c|c|c|c|c|c|} \hline
$A_1$ & $A_2$  & $\textcolor{red}{x_{1,2}}+\textcolor{red}{x_{1,3}}=a_2$ & $A_3$ &  $A_4$ & $y+z\neq a_2\ :$\\ 
      &            &   $\textcolor{red}{x_{1,2}}\in A_1$\ ,\ $\textcolor{red}{x_{1,3}}\in A_2$                                     &      &    &
$y\in A_3$\ ,\ $z\in A_4$ \\  \hline     
$\{1, \textcolor{red}{8}\}$ & $\{2,\textcolor{red}{7}\}$ & $\textcolor{red}{8}+\textcolor{red}{7}=15$ & $\{ 3,6\}$ &  $\{ 4, 5  \}$ &  $3+4\neq 15$\\ 
          &     &          &            &                 & $3+5\neq 15$\\ 
          &     &          &            &                 & $6+4\neq 15$\\   &     &          &            &                 & $6+5\neq 15$\\  \hline     
 $\{1, \textcolor{red}{8}\}$& $\{3,\textcolor{red}{6}\}$ & $\textcolor{red}{8}+\textcolor{red}{6}=14$ & $\{ 2,7\}$ &  $\{ 4, 5  \}$ &  $2+4\neq 14$\\ 
          &     &          &            &                 & $2+5\neq 14$\\ 
          &     &          &            &                 & $7+4\neq 14$\\   &     &          &            &                 & $7+5=14$\\     \hline
$\{1, \textcolor{red}{8}\}$ & $\{\textcolor{red}{4},5\}$ & $\textcolor{red}{8}+\textcolor{red}{4}=12$ & $\{ 2,7\}$ &  $\{ 3, 6  \}$ &  $2+3\neq 12$\\ 
          &     &          &            &                 & $2+6\neq 12$\\ 
          &     &          &            &                 & $7+3\neq 12$\\   &     &          &            &                 & $7+6\neq 12$\\     \hline          
\end{tabular}
\caption{Using a criterion to eliminate $15, 14$ and $12$ as possible values for $a_2$ when $a_1=9$ by summing all possible values for $y$ and $z$.}
\label{eliminate_a2_values}    
\end{figure}

The proofs for the permissible values of $a_2$ when $a_1$ is $13, 15$, and $16$ are similar to the case where $a_1=9$ (see the tables in Figures \ref{establish_a2_valuesais13}, \ref{establish_a2_valuesais15}, and  \ref{establish_a2_valuesais16}).

\begin{figure}[hbt!]
    \centering
\renewcommand{\arraystretch}{1.5}
\begin{tabular}{|c|c|c|c|c|c|} \hline
$A_1$ & $A_2$  & $\textcolor{red}{x_{1,2}}+\textcolor{red}{x_{1,3}}=a_2$ : & $A_3$ &  $A_4$ & $\textcolor{red}{y}+\textcolor{red}{z} = a_2\ :$\\ 
      &            &   $\textcolor{red}{x_{1,2}}\in A_1$\ ,\ $\textcolor{red}{x_{1,3}}\in A_2$                                      &      &    &
$\textcolor{red}{y}\in A_3$\ ,\ $\textcolor{red}{z}\in A_4$ \\  \hline
$\{1, \textcolor{red}{12}\}$ & $\{\textcolor{red}{2}, 11\}$ & $\textcolor{red}{12}+\textcolor{red}{2}=14$ & $\{ 3,\textcolor{red}{10}\}$ &  $\{ \textcolor{red}{4}, 9  \}$ &  $\textcolor{red}{10}+\textcolor{red}{4}=14$\\  \hline
$\{1,\textcolor{red}{12}\}$ & $\{\textcolor{red}{3}, 10\}$ & $\textcolor{red}{12}+\textcolor{red}{3}=15$ & $\{ 2,\textcolor{red}{11}\}$ &  $\{ \textcolor{red}{4}, 9  \}$ &  $\textcolor{red}{11}+\textcolor{red}{4}=15$\\  \hline
$\{1, \textcolor{red}{12}\}$ & $\{4, \textcolor{red}{9}\}$ & $\textcolor{red}{12}+\textcolor{red}{9}=21$ & $\{ 3,\textcolor{red}{10}\}$ &  $\{ 2, \textcolor{red}{11}  \}$ &  $\textcolor{red}{10}+\textcolor{red}{11}=21$\\  \hline
\end{tabular}
    
    \caption{Using a criterion to establish $10, 11$, and $13$ as possible values for $a_2$ when $a_1=13$.}
    \label{establish_a2_valuesais13}
\end{figure}

\begin{figure}[hbt!]
    \centering
\renewcommand{\arraystretch}{1.5}
\begin{tabular}{|c|c|c|c|c|c|} \hline
$A_1$ & $A_2$  & $\textcolor{red}{x_{1,2}}+\textcolor{red}{x_{1,3}}=a_2$ : & $A_3$ &  $A_4$ & $\textcolor{red}{y}+\textcolor{red}{z} = a_2\ :$\\ 
      &            &   $\textcolor{red}{x_{1,2}}\in A_1$\ ,\ $\textcolor{red}{x_{1,3}}\in A_2$                                      &      &    &
$\textcolor{red}{y}\in A_3$\ ,\ $\textcolor{red}{z}\in A_4$ \\  \hline
$\{1, \textcolor{red}{14}\}$ & $\{\textcolor{red}{2}, 13\}$ & $\textcolor{red}{14}+\textcolor{red}{2}=16$ & $\{ 5,\textcolor{red}{10}\}$ &  $\{ \textcolor{red}{6}, 9  \}$ &  $\textcolor{red}{10}+\textcolor{red}{6}=16$\\  \hline
$\{1,\textcolor{red}{14}\}$ & $\{\textcolor{red}{5}, 10\}$ & $\textcolor{red}{14}+\textcolor{red}{5}=19$ & $\{ \textcolor{red}{6}, 9 \}$ &  $\{2, \textcolor{red}{13}  \}$ &  $\textcolor{red}{6}+\textcolor{red}{13}=19$\\  \hline
$\{1, \textcolor{red}{14}\}$ & $\{6, \textcolor{red}{9}\}$ & $\textcolor{red}{14}+\textcolor{red}{9}=23$ & $\{ 2,\textcolor{red}{13}\}$ &  $\{ 5, \textcolor{red}{10}  \}$ &  $\textcolor{red}{13}+\textcolor{red}{10}=23$\\  \hline
\end{tabular}
    
    \caption{Using a criterion to establish $16, 19$, and $23$ as possible values for $a_2$ when $a_1=15$.}
    \label{establish_a2_valuesais15}
\end{figure}

\begin{figure}[hbt!]
    \centering
\renewcommand{\arraystretch}{1.5}
\begin{tabular}{|c|c|c|c|c|c|} \hline
$A_1$ & $A_2$  & $\textcolor{red}{x_{1,2}}+\textcolor{red}{x_{1,3}}=a_2$ : & $A_3$ &  $A_4$ & $\textcolor{red}{y}+\textcolor{red}{z} = a_2\ :$\\ 
      &            &   $\textcolor{red}{x_{1,2}}\in A_1$\ ,\ $\textcolor{red}{x_{1,3}}\in A_2$                                      &      &    &
$\textcolor{red}{y}\in A_3$\ ,\ $\textcolor{red}{z}\in A_4$ \\  \hline
$\{1, \textcolor{red}{15}\}$ & $\{\textcolor{red}{3}, 13\}$ & $\textcolor{red}{15}+\textcolor{red}{3}=18$ & $\{ 5,\textcolor{red}{11}\}$ &  $\{ \textcolor{red}{7}, 9  \}$ &  $\textcolor{red}{11}+\textcolor{red}{7}=18$\\  \hline
$\{1,\textcolor{red}{15}\}$ & $\{\textcolor{red}{5}, 11\}$ & $\textcolor{red}{15}+\textcolor{red}{5}=20$ & $\{ 3,\textcolor{red}{13}\}$ &  $\{ \textcolor{red}{7}, 9  \}$ &  $\textcolor{red}{13}+\textcolor{red}{7}=20$\\  \hline
$\{1, \textcolor{red}{15}\}$ & $\{7, \textcolor{red}{9}\}$ & $\textcolor{red}{15}+\textcolor{red}{9}=24$ & $\{ 5,\textcolor{red}{11}\}$ &  $\{ 3, \textcolor{red}{13}  \}$ &  $\textcolor{red}{11}+\textcolor{red}{13}=24$\\  \hline
\end{tabular}
    
    \caption{Using a criterion to establish $18, 20$, and $24$ as possible values for $a_2$ when $a_1=16$.}
    \label{establish_a2_valuesais16}
\end{figure}
  
  Suppose $a_1=17$. 
  Then $a_1=\tfrac{1}{2}S$ and $S-a_1=\tfrac{1}{2}S$.
  Thus, $X$ is vertically pairwise balanced.
  So, the sets
   $\{ x_{2i-1,2j-1},\, x_{2i-1,2j}\}$ and $\{ x_{2i,2j-1},\, x_{2i,2j}\}$,
 for $i,j=1,2$, are the eight sets $\{k,17-k\}$, for $1\le k \le 8$,
 which we denote by $A_1, \, A_2,\ldots,\, A_8$.
 By Proposition \ref{prop:extendedgesums}, $a_3=2a_1-a_2=S-a_2$. Then $S-a_3=a_2$.
 Hence,
   \begin{equation}\label{eqn:EdgeSumsAreA2}
      a_2=x_{2i-1,2}+x_{2i-1,3}=x_{2i,1}+x_{2i,4}
      \text{ \ for \ }i=1,\, 2.
  \end{equation}
 Thus, we can combine the sets $A_1, \, A_2,\ldots,\, A_8$ into pair sets
 $A_{2k-1},\, A_{2k}$, for $1\le k \le 4$, such that
there exist $y\in A_{2k-1}$ and $z\in A_{2k}$
  for which $y+z=a_2$.
  Let $A_1=\{ 1,16\}$.
  Then $A_2$ is the set that contains the element 
  $x\in A_2$ for which $x+16=a_2$.

Since $x_{1,2}=16$ and $x_{1,3}\ge 2$, we have
$a_2-x_{1,2}+x_{1,3}\ge 18$.
  By \eqref{eqn:EdgeSumsAreA2}, we have
  \begin{equation*}
      4a_2= \sum_{i=1}^2 (x_{2i-1,2}+x_{2i-1,3} +x_{2i,1}+x_{2i,4})
 \le \sum_{k=9}^{16} k =100.
  \end{equation*}
 Hence, $a_2\le 25$.

 Consider $a_2=18$. 
 Since $x_{1,1}=1$, we have $x_{1,2}=16$.
 Then $x_{1,3}=a_2-x_{1,2}=2$ and $A_2=\{2,\, 15\}$.
 Let $A_3=\{3,\, 14\}$. Since $3$ cannot match with $15$ to add to $a_2=18$,
 $14$ must match with $4$. Thus, $A_4=\{4, 13\}$.
 Let $A_5=\{5,\, 12\}$. Since $5$ cannot match with $13$ to add to $a_2=18$,
 $12$ must match with $6$. Thus, $A_6=\{6, 11\}$.
 Let $A_7=\{7,\, 10\}$. Since $7$ cannot match with $11$ to add to $a_2=18$,
 $10$ must match with $8$. Thus, $A_8=\{8, 6\}$.
 See line 2 of Figure \ref{fig:establish_a2_valuesFora1Is17}.
 A similar analysis establishes the sets $A_1,\, A_2,\ldots,\, A_8$
 for $a_2\in\{19,\, 21, \, 25\}$. See Figure \ref{fig:establish_a2_valuesFora1Is17}.
 
\begin{figure}[ht]
    \centering
    {\small
\renewcommand{\arraystretch}{1.5}
\begin{tabular}{|c|c|c|c|c|} \hline
$A_1, A_2$ & $\textcolor{blue}{x_{1,2}} +\textcolor{red}{x_{1,3}}=a_2$ & $A_3,\, A_4$ &  $A_5,\, A_6$ 
&  $A_7,\, A_8$\\  \hline
$\{1, \textcolor{blue}{16}\}, \{\textcolor{red}{2},15\}$ & $\textcolor{blue}{16}+\textcolor{red}{2}=18$ 
& $\{ 3,\textcolor{blue}{14}\}$, $\{ \textcolor{blue}{4},13\}$  
&  $\{ 5, \textcolor{blue}{12}  \}$, $\{ \textcolor{blue}{6}, 11\}$  
&  $\{7, \textcolor{blue}{10} \}$ , $\{\textcolor{blue}{8}, 9 \}$ \\  \hline
$\{1, \textcolor{blue}{16}\}, \{\textcolor{red}{3},14\}$ & $\textcolor{blue}{16}+\textcolor{red}{3}=19$ 
& $\{ 5,\textcolor{blue}{12}\}$, $\{ \textcolor{blue}{7},10\}$  
&  $\{ \textcolor{blue}{8}, 9  \}$, $\{6, \textcolor{blue}{11} \}$  
&  $\{\textcolor{blue}{4}, 13 \}$ , $\{2, \textcolor{blue}{15}\}$ \\  \hline
$\{1, \textcolor{blue}{16}\},   \{\textcolor{red}{5},12\}$ & $\textcolor{blue}{16}+\textcolor{red}{5}=21$ 
& $\{\textcolor{blue}{8},9\}$, $\{4, \textcolor{blue}{13}\}$  
&  $\{2, \textcolor{blue}{15}  \}$, $\{\textcolor{blue}{6}, 11 \}$  
&  $\{\textcolor{blue}{7},10 \}$ , $\{3, \textcolor{blue}{14}\}$ \\  \hline
$\{1, \textcolor{blue}{16}\},    \{8,\textcolor{red}{9}\}$ & $\textcolor{blue}{16}+\textcolor{red}{9}=25$ 
& $\{ 2,\textcolor{blue}{15}\}$, $\{7, \textcolor{blue}{10}\}$  
&  $\{3, \textcolor{blue}{14}\}$, $\{6, \textcolor{blue}{11} \}$  
&  $\{4, \textcolor{blue}{13} \}$ , $\{5, \textcolor{blue}{12}\}$ \\  \hline
\end{tabular}
    }
    \caption{Using a criterion to establish $18,\, 19, \, 21$, and $25$ as possible values for $a_2$.
    We have $A_1=\{1,\, \textcolor{blue}{16}\}$ where $\textcolor{blue}{x_{1,2}=16}$.
    The label $\textcolor{red}{x_{1,3}}\in A_2$ in highlighted in  \textcolor{red}{red}.
    For the pairs $A_{2k-1}, A_{2k}$, for $k=2,\, 3,\, 4$,
    the labels from each set that sum to $a_2$ are highlighted in 
    \textcolor{blue}{blue}.}
    
    \label{fig:establish_a2_valuesFora1Is17}
\end{figure}

Finally, we need to show $a_2\not\in\{ 20, 22, 23, 24\}$.
Suppose $a_2=20$. Then $x_{1,3}=a_2-x_{1,2}=6$ and $x_{1,4}=a_1-x_{1,3}=11$. Thus $A_1=\{ 1, 16\}$
and $A_2=\{ 4, 13\}$. Let $A_3=\{ 7,10\}$.
Since $10+10=20$, we cannot use use the label $10$ to
use to add to $a_2=20$. Since $7+13=20$, we have 
$A_4=\{ 4,13\}$. Hence, $A_2=\{ 4,13\}=A_4$ which contradicts $A_2\ne A_4$. See Figure \ref{fig:establish_a2_NonvaluesFora1Is17}.
The proofs of the cases $a_2\in\{ 22, 23, 24\}$
are similar. We leave the details to the reader.

\begin{figure}[ht]
    \centering
\renewcommand{\arraystretch}{1.5}
\begin{tabular}{|c|c|c|c|} \hline
$A_1, A_2$ & $\textcolor{blue}{x_{1,2}} +\textcolor{red}{x_{1,3}}=a_2$ & $A_3,\, A_4$ &  $A_5,\, A_6$ \\  \hline
$\{1, \textcolor{blue}{16}\}, \{\textcolor{red}{4},13\}$ & $\textcolor{blue}{16}+\textcolor{red}{4}=20$ 
&  $\{\textcolor{blue}{7}, 10 \}$ , 
$\{4, \textcolor{blue}{13} \}$ & \\  \hline

$\{1, \textcolor{blue}{16}\}, \{\textcolor{red}{6},11\}$ & $\textcolor{blue}{16}+\textcolor{red}{6}=22$ 
&  $\{3, \textcolor{blue}{14}  \}$, $\{\textcolor{blue}{8}, 9 \}$  
&  $\{4, \textcolor{blue}{13} \}$ , $\{8, \textcolor{blue}{9}\}$ \\  \hline

$\{1, \textcolor{blue}{16}\},   \{\textcolor{red}{7},10\}$ & $\textcolor{blue}{16}+\textcolor{red}{7}=23$ 
&  $\{4, \textcolor{blue}{13} \}$ , $\{7, \textcolor{blue}{10}\}$  & \\  \hline

$\{1, \textcolor{blue}{16}\},    \{\textcolor{red}{8}, 9\}$ & $\textcolor{blue}{16}+\textcolor{red}{8}=24$ 
& $\{ 2,\textcolor{blue}{15}\}$, $\{8, \textcolor{blue}{9}\}$ & \\  \hline
\end{tabular}
    
    \caption{Using a criterion to establish $20,\, 22, \, 23$, and $24$ are not possible values for $a_2$.
    We have $A_1=\{1,\, \textcolor{blue}{16}\}$ where $\textcolor{blue}{x_{1,2}=16}$.
    The label $\textcolor{red}{x_{1,3}}\in A_2$ in highlighted in  \textcolor{red}{red}.
    For the pairs $A_{2k-1}, A_{2k}$, for $k=2$ 
    (and $k=3$ if necessary),
    the labels from each set that sum to $a_2$ are highlighted in 
    \textcolor{blue}{blue}.
    Note that for each value of $a_2$, we are forced to duplicate a set $A_j$ which produces a contradiction.}
    \label{fig:establish_a2_NonvaluesFora1Is17}
\end{figure}
\end{proof}

\begin{proposition} 
\label{propHorizontalPairwiseBalancedCondition_A1=9}
  Suppose $X=\{ x_{i,j} : (i,j) \in V(\mathcal{K}_{4,4})\}$ 
  is a standard $C_4$-face-magic labeling on $\mathcal{K}_{4,4}$ 
  such that $a_1=x_{1,1}+x_{1,2}=9$.
  Then $X$ is one of the 
  horizontally pairwise balanced labelings appearing in
  Figure \ref{fig:StandHorzLabelsA1=9}. 
\end{proposition}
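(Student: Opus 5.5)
The plan is to use the rigid structure that $a_1=9$ forces, and to reduce everything to a finite case analysis driven by Proposition~\ref{prop:PermissibleA1Values}. Since $a_1=9$ and $S-a_1=25$, the partition \eqref{partition1to16} from the proof of Proposition~\ref{prop:PermissibleA1Values} is forced. The odd rows $1,3$ carry exactly the pairs $\{1,8\},\{2,7\},\{3,6\},\{4,5\}$, placed on the column pairs $\{1,2\}$ and $\{3,4\}$. The even rows $2,4$ carry exactly $\{9,16\},\{10,15\},\{11,14\},\{12,13\}$. In particular every odd-row label lies in $\{1,\dots,8\}$ and every even-row label lies in $\{9,\dots,16\}$. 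Note also that the even-row pairs are exactly the complements $\{17-u,17-v\}$ of the odd-row pairs.

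\emph{Step 1: fix rows 1 and 3.} With $x_{1,1}=1$, we get $x_{1,2}=8$. By Proposition~\ref{prop:PermissibleA1Values}, $a_2\in\{10,11,13\}$, so $x_{1,3}\in\{2,3,5\}$ and $x_{1,4}=9-x_{1,3}\in\{7,6,4\}$.
\begin{itemize}
\item Row 3 consists of the two remaining $9$-pairs.
\item By Proposition~\ref{prop:extendedgesums}(ii), its middle entries satisfy $x_{3,2}+x_{3,3}=a_2$. The tables in Figures~\ref{establish_a2_values} show this pins down which element of each pair sits in columns $2,3$.
\item By (iii), $x_{3,4}+x_{3,1}=a_3=18-a_2$ is then automatic.
\item The standard condition $y_{3,1}<y_{3,4}$ fixes the remaining reflection ambiguity, which is exactly the effect of $\widehat{V}_3$.
\end{itemize}
So each $a_2$ yields a unique row~1 and a unique row~3.

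\emph{Step 2: parametrize rows 2 and 4.} Proposition~\ref{prop:extendedgesums} prescribes the consecutive cyclic sums of row 2 as $25,\,34-a_2,\,25,\,34-a_3$. Hence row 2 is determined by a single parameter $t=x_{2,1}$:
\[
(x_{2,1},x_{2,2},x_{2,3},x_{2,4})=(t,\;25-t,\;9-a_2+t,\;16+a_2-t).
\]
Row 4 is determined in the same way by $s=x_{4,1}$. Taking $t=17-x_{1,j}$ pattern-wise, i.e.\ $t=16$, gives exactly $x_{2,j}=17-x_{1,j}$. Similarly, the horizontally balanced choice for row 4 is $x_{4,j}=17-x_{3,j}$.

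\emph{Step 3: impose the constraints on $t$ and $s$.} The conditions are:
\begin{itemize}
\item all eight entries lie in $\{9,\dots,16\}$ and are distinct;
\item the entries of row 2 form two of the four $25$-pairs and row 4 takes the other two;
\item the standard conditions $x_{2,1}>x_{2,4}$ and $x_{4,1}>x_{4,4}$ hold;
\item the properly-symmetrized condition $\min\{x_{2,2},x_{2,3}\}<\min\{x_{4,2},x_{4,3}\}$ holds.
\end{itemize}
For each $a_2$ this is a small finite check of at most eight values of $t$. The goal is to show that the only survivors are the complementary choices $x_{2,j}=17-x_{1,j}$ and $x_{4,j}=17-x_{3,j}$, possibly with $t$ and $s$ replaced by their reflected values, which the standard conditions then fix. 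That would give horizontal pairwise balance. Finally, I will verify directly that each surviving array satisfies all sixteen face sums, including the twisted faces between row 4 and the reversed row 1, and list the results as in Figure~\ref{fig:StandHorzLabelsA1=9}.

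The main obstacle is Step 3. A priori, row 2 could be built from the complements of row 3's pairs rather than row 1's; that is, rows 2 and 4 might "cross-match". To rule this out I will first try the distinctness and range conditions together with the properly-symmetrized inequality. If a cross-matched configuration still survives those, I will invoke the twisted face constraint $x_{4,j}+x_{4,j+1}+x_{1,5-j}+x_{1,4-j}=34$ entrywise, used alongside the other face equations rather than only through pair sums, to eliminate it.
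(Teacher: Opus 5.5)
Your plan is correct and is essentially the paper's argument. The ingredients are the same: the forced partition for $a_1=9$; the values $a_2\in\{10,11,13\}$ pinning down your rows $1$ and $3$ (the paper's columns); the $S-a_2$ matching of the $25$-pairs for rows $2$ and $4$, where your one-parameter families in $t,s$ are just another bookkeeping of the paper's four-vertex paths; and finally proper symmetrization plus the standard conditions to place and orient everything.

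The finite check does go through, and the cross-matched case is eliminated by the properly-symmetrized inequality alone. The label $16$ can never be a middle entry of an even row, since its middle partner would be $18-a_2\le 8$. Hence $9$ is always a middle entry of its even row, and so it must sit in row $2$. This inequality is also the only thing that can eliminate the cross-matched case. All face sums, twisted ones included, already follow from the consecutive pair sums; for example, the rows $(1,8,2,7),(14,11,13,12),(3,6,4,5),(16,9,15,10)$ form a genuine $C_4$-face-magic labeling. Your twisted-face fallback would therefore have been vacuous.
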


\begin{figure}[hbt!]
    \centering
    \begin{tabular}{|c|c|c|c|}\hline
        7  &10  &5   &12  \\  \hline
        2  &15   &4   &13  \\  \hline
        8  &9   &6   &11  \\  \hline
        1  &16  &3   &14  \\  \hline
    \end{tabular}
\hspace{0.15in}\begin{tabular}{|c|c|c|c|} \hline
        6  &11  &5   &12  \\  \hline
        3  &14   &4   &13  \\  \hline
        8  &9   &7   &10  \\  \hline
        1  &16  &2   &15  \\  \hline
    \end{tabular}
 \hspace{0.15in}   \begin{tabular}{|c|c|c|c|} \hline
        4  &13  &3  &14  \\  \hline
        5  &12   &6   &11  \\  \hline
        8  &9   &7   &10  \\  \hline
        1  &16  &2   &15  \\  \hline
    \end{tabular}
    \caption{The standard horizontally pairwise balanced labelings on
    $\mathcal{K}_{4,4}$ with $a_1=9$.}
    \label{fig:StandHorzLabelsA1=9}
\end{figure}

\begin{proof}
By Proposition \ref{prop:extendedgesums},
    the collection
\begin{equation}\label{ppartition1to16}
    \big\{ \{x_{2i-1,2j-1}, x_{2i-1,2j}\} : 1\le i,j\le 2\big\}
    \cup
     \big\{ \{x_{2i,2j-1}, x_{2i,2j}\} : 1\le i,j\le 2\big\}
\end{equation}
is a partition of the set $\{1,2,\ldots,16\}$ such that
$x_{2i-1,2j-1} + x_{2i-1,2j} = a_1$ and
$x_{2i,2j-1} + x_{2i,2j} =S- a_1$.

  There is only one way to partition the set $\{1,2,\ldots,16\}$ into eight 2-element sets such that
  \begin{itemize}
    \item  the sum of the elements of a set is $a_1=9$ for four of the 2-element sets, and
    \item  the sum of the elements of a set  is $S-a_1=25$ for the other four 2-element sets.
  \end{itemize}
  As displayed in the first row of the table in Figure \ref{permissiblevaluesa1}, 
  this unique partition of the set $\{1,2,\ldots,16\}$ is given by the four sets,
  \begin{equation*}
    \{1,8\},  \{2,7\}, \{3,6\}, \mbox{ and } \{4,5\},
  \end{equation*}
  whose sum of elements is $a_1=9$, and by the  four sets,
  \begin{equation*}
    \{9,16\},  \{10,15\}, \{11,14\}, \mbox{ and } \{12,13\},
  \end{equation*}
  whose sum of elements is $S-a_1=25$.

  By Proposition \ref{prop:extendedgesums}, 
  the elements in each 2-element set form the labels on the vertices $(i,2j-1)$ and $(i,2j)$
  for some $1 \le i \le 4$ and $ 1 \le j \le 2$.
  See Figures \ref{figA1EqualsNine} and \ref{figA1EqualsTwentyFive}.
  We adjoin the vertices from two pairs of edges in Figure \ref{figA1EqualsNine} to form two paths on four vertices
  such that the edge sum on each new edge is $a_2$.
  \begin{figure}
\hspace{0.50in}\begin{picture}(300,30)(-50,0)
\put(-50,20){\circle*{7}}
\put(-53,0){$1$}
\put(-50,20){\line(1,0){40}}

\put(-10,20){\circle*{7}}
\put(-13,0){$8$}

\put(30,20){\circle*{7}}
\put(27,0){$2$}
\put(30,20){\line(1,0){40}}

\put(70,20){\circle*{7}}
\put(67,0){$7$}

\put(110,20){\circle*{7}}
\put(107,0){$3$}
\put(110,20){\line(1,0){40}}

\put(150,20){\circle*{7}}
\put(147,0){$6$}

\put(190,20){\circle*{7}}
\put(187,0){$4$}
\put(190,20){\line(1,0){40}}

\put(230,20){\circle*{7}}
\put(227,0){$5$}
\end{picture}
\caption{Edges with edge sum $a_1=9$.}
\label{figA1EqualsNine}
\end{figure}
Simultaneously,  we adjoin the vertices from two pairs of edges in Figure \ref{figA1EqualsTwentyFive}
to form two paths on four vertices such that the edge sum on each new edge is $S-a_2$.

\begin{figure}
\hspace{0.50in}\begin{picture}(300,30)(-50,0)
\put(-50,20){\circle*{7}}
\put(-53,0){$9$}
\put(-50,20){\line(1,0){40}}

\put(-10,20){\circle*{7}}
\put(-13,0){$16$}

\put(30,20){\circle*{7}}
\put(27,0){$10$}
\put(30,20){\line(1,0){40}}

\put(70,20){\circle*{7}}
\put(67,0){$15$}

\put(110,20){\circle*{7}}
\put(107,0){$11$}
\put(110,20){\line(1,0){40}}

\put(150,20){\circle*{7}}
\put(147,0){$14$}

\put(190,20){\circle*{7}}
\put(187,0){$12$}
\put(190,20){\line(1,0){40}}

\put(230,20){\circle*{7}}
\put(227,0){$13$}
\end{picture}
\caption{Edges with edge sum $S-a_1=25$.}
\label{figA1EqualsTwentyFive}
\end{figure}

By Proposition \ref{prop:PermissibleA1Values}, the permissible values of $a_2$ for $a_1=9$ are $10, 11$, and $13$. We will construct a labeling for the case $a_2=10$, since constructions using the other values of $a_2$ are similar.
Of the sets $\{9,16\}$
$\{10,15\}$, $\{11,14\}$ and $\{12,13\}$,
let $B_1=\{9,16\}$, choose $B_2$ to be the set   
such that we have $y\in B_1$ and $z\in B_2$ 
such that $y+z=S-a_2=24$, and let $B_3$ and $B_4$ be the remaining two sets.
Since $8\in A_1$ is matched with $2\in A_2$ so that
$x_{1,2}+x_{1,3}=8+2=a_2$ (see Figure \ref{establish_a2_values}), we must match $y=9\in B_1$
with $z\in B_2$ so that $y+z=24$.
Thus, $z=15$ and $B_2=\{10,15\}$.
We need to choose labels $u\in \{11,14\}$
and $w\in\{12,13\}$ so that $u+w=24$.
Thus, $u=11\in B_3=\{11,14\}$ and $w=13\in B_4=\{12,13\}$.
See Figure \ref{fig:LabelA1is9A2is10}.
By Proposition \ref{prop:LabelEquiv}, the path with labels 1, 8, 2, 7 appears in column 1, 
the path with labels 16, 9, 15, 10 appears in column 2,
the path with labels 3, 6, 4, 5 appears in column 3,
and
the path with labels 14, 11, 13, 12 appears in column 4.
We then apply elementary vertical cycle operations to
columns 2, 3 and 4 so that the resulting labeling is a standard labeling.
This labeling is illustrated in Figure  
\ref{fig:LabelA1is9A2is10}.
We observe that the labeling is horizontally pairwise balanced.

\begin{figure}
\centering
\begin{tikzpicture}
 \draw (0,0)--(0, 4.5);
 \draw (2, 0)--(2, 4.5);
 \draw (4, 0)--(4, 4.5);
 \draw (6, 0)--(6, 4.5);
 \draw[fill, color=blue] (0,0) circle [radius=.1];
 \draw[fill, color=blue] (0,1.5) circle [radius=.1];
 \draw[fill, color=blue] (0,3) circle [radius=.1];
 \draw[fill, color=blue] (0,4.5) circle [radius=.1];
 \draw[fill, color=red] (2,0) circle [radius=.1];
 \draw[fill, color=red] (2,1.5) circle [radius=.1];
 \draw[fill, color=red] (2,3) circle [radius=.1];
 \draw[fill, color=red] (2,4.5) circle [radius=.1];
 \draw[fill, color=blue] (4,0) circle [radius=.1];   
 \draw[fill, color=blue] (4,1.5) circle [radius=.1];   
 \draw[fill, color=blue] (4,3) circle [radius=.1];
 \draw[fill, color=blue] (4,4.5) circle [radius=.1];
 \draw[fill, color=red] (6,0) circle [radius=.1];   
 \draw[fill, color=red] (6,1.5) circle [radius=.1];   
 \draw[fill, color=red] (6,3) circle [radius=.1];
 \draw[fill, color=red] (6,4.5) circle [radius=.1];
 \node[above] at (0.7,-.25) {$1$};
 \node[above] at (0.7,1.25) {$8$};
 \node[above] at (0.7,2.75) {$2$};
 \node[above] at (0.7,4.25) {$7$};
 \node[above] at (2.8,-.25) {$16$};
 \node[above] at (2.8,1.25) {$9$};
 \node[above] at (2.8,2.75) {$15$};
 \node[above] at (2.8,4.25) {$10$};
 \node[above] at (4.7,-.25) {$3$};
 \node[above] at (4.7,1.25) {$6$};
 \node[above] at (4.7,2.75) {$4$};
 \node[above] at (4.7,4.25) {$5$};
 \node[above] at (6.8,-.25) {$14$};
 \node[above] at (6.8,1.25) {$11$};
 \node[above] at (6.8,2.75) {$13$};
 \node[above] at (6.8,4.25) {$12$};
\end{tikzpicture}
\caption{Paths that create the labels in the four vertical
cycles of $\mathcal{K}_{4,4}$ when $a_1=9$ and $a_2=10$.}
\label{fig:LabelA1is9A2is10}
\end{figure}
\end{proof}

\begin{proposition} 
\label{propHorizontalPairwiseBalancedCondition_A1=13}
  Suppose $X=\{ x_{i,j} : (i,j) \in V(\mathcal{K}_{4,4})\}$ 
  is a standard $C_4$-face-magic labeling on $\mathcal{K}_{4,4}$ 
  such that $a_1=x_{1,1}+x_{1,2}=13$.
  Then $X$ is one of the 
  horizontally pairwise balanced labelings appearing in
  Figure \ref{fig:StandHorzLabelsA1=13}. 
\end{proposition}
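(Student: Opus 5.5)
The plan follows the proof of Proposition \ref{propHorizontalPairwiseBalancedCondition_A1=9}. Throughout, $S=34$ by Lemma \ref{lem1}, so $S-a_1=21$.

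\textbf{Step 1: the partition.} By Proposition \ref{prop:extendedgesums}, the eight horizontal pairs $\{x_{2i-1,2j-1},x_{2i-1,2j}\}$ and $\{x_{2i,2j-1},x_{2i,2j}\}$ partition $\{1,\ldots,16\}$. Four of the pairs sum to $a_1=13$ and the other four sum to $21$. I will show this partition is unique by a forcing argument.
\begin{itemize}
\item The label $1$ can only lie in a pair summing to $13$, so $\{1,12\}$ is a block.
\item The labels $16,15,14,13$ can only lie in pairs summing to $21$. Hence $\{5,16\},\{6,15\},\{7,14\},\{8,13\}$ are blocks.
\item The remaining labels $2,3,4,9,10,11$ must then pair to $13$, giving $\{2,11\},\{3,10\},\{4,9\}$.
\end{itemize}
This recovers the second row of Figure \ref{permissiblevaluesa1}.

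\textbf{Step 2: build the paths for each $a_2$.} By Proposition \ref{prop:PermissibleA1Values}, $a_2\in\{14,15,21\}$, and $a_3=2a_1-a_2=26-a_2$. I treat each value of $a_2$ separately, as in the $a_1=9$ case.
\begin{itemize}
\item For the pairs summing to $13$: set $A_1=\{1,12\}$. By Figure \ref{establish_a2_valuesais13}, the choice of $a_2$ determines $x_{1,3}\in A_2$ and forces the matching $y+z=a_2$ between $A_3$ and $A_4$. This yields the paths $1,12,x_{1,3},x_{1,4}$ (the vertical cycle at $i=1$) and a second path for $i=3$. Each path has edge sums $a_1,a_2,a_1$, and its closing edge has sum $a_3$.
\item For the pairs summing to $21$: the even vertical cycles $i=2,4$ must have edge sums $21$, $S-a_2=34-a_2$ and $S-a_3$. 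So I pair up $\{5,16\},\{6,15\},\{7,14\},\{8,13\}$ into two paths whose middle edges sum to $34-a_2$.
\end{itemize}
This is finite bookkeeping. For instance, when $a_2=14$ the middle sum is $20$: the only cross sums equal to $20$ are $5+15$, $6+14$ and $7+13$. The choices $5+15$ and $7+13$ are compatible, while $6+14$ forces $8+16$, which is impossible. This leaves $16,5,15,6$ and $14,7,13,8$, up to order.

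\textbf{Step 3: place the paths and check face sums.} Next I decide which odd path sits at $i=3$ and which even path sits at $i=2$ versus $i=4$. Proposition \ref{prop:extendedgesums} gives only necessary conditions, so here I impose the full face condition. For the four faces between cycles $i$ and $i+1$ (including the twisted faces between $i=4$ and $i=1$), each face sum must equal $34$. This is where alignments fail, together with condition (2) of Definition \ref{Defn:ProperlySymmetrized}, which fixes which even path lies in cycle $2$. I expect this to be the main obstacle. The edge-sum data alone leave several candidate arrangements, and the twist in edge type (iv) of the definition of $\mathcal{K}_{4,4}$ reverses orientation, so it is easy to mis-verify. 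I would write each surviving candidate as a $4\times4$ checkerboard and check all $16$ faces directly.

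\textbf{Step 4: standardize and conclude.} Using Proposition \ref{prop:StandardLabel}, I apply elementary vertical cycle operations $\widehat V_2,\widehat V_3,\widehat V_4$ to obtain the unique standard representative, that is, $y_{2,1}>y_{2,4}$, $y_{3,1}<y_{3,4}$ and $y_{4,1}>y_{4,4}$. For each $a_2\in\{14,15,21\}$ I then read off the resulting labeling and compare it with Figure \ref{fig:StandHorzLabelsA1=13}.

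Finally, I check horizontal pairwise balance, $x_{2i-1,j}+x_{2i,j}=17$. I expect this to hold structurally, because the forced pairs in Step 2 link each label $k$ in an odd cycle to $17-k$ in the adjacent even cycle. The clearest instance is $12\leftrightarrow5$ and $1\leftrightarrow16$ in the first two cycles.
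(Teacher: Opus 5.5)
Your plan is correct and is essentially the paper's argument; the paper only says the analysis parallels the case $a_1=9$. Both use the forced partition, the unique odd and even paths for each $a_2\in\{14,15,21\}$, condition~(2) of Definition~\ref{Defn:ProperlySymmetrized} to choose column~$2$, and the standard inequalities to fix orientations.

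Two small remarks. First, the face check in Step~3 never eliminates a placement that respects Proposition~\ref{prop:extendedgesums}: every face sum is a sum of complementary vertical edge sums ($13+21$, $a_2+(34-a_2)$, or $a_3+(34-a_3)$), and the twist only swaps the two $13$/$21$ edge positions. Second, when you compare with Figure~\ref{fig:StandHorzLabelsA1=13}, you will find that the entry $x_{4,3}$ of the first labeling is misprinted as $11$ instead of $13$.
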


\begin{figure}[hbt!]
    \centering
    \begin{tabular}{|c|c|c|c|} \hline
        11 &6   &9   &8   \\  \hline
        2  &15   &4   &11  \\  \hline
        12 &5   &10  &7   \\  \hline
        1  &16  &3   &14  \\  \hline
    \end{tabular}
\hspace{0.15in}\begin{tabular}{|c|c|c|c|} \hline
        10 &7   &9   &8   \\  \hline
        3  &14   &4   &13  \\  \hline
        12 &5   &11  &6   \\  \hline
        1  &16  &2   &15  \\  \hline
    \end{tabular}
 \hspace{0.15in}   \begin{tabular}{|c|c|c|c|} \hline
        4  &13  &3  &14  \\  \hline
        9  &8    &10  &7  \\  \hline
        12 &5   &11  &6  \\  \hline
        1  &16  &2   &15  \\  \hline
    \end{tabular}
    \caption{The standard horizontally pairwise balanced labelings on
    $\mathcal{K}_{4,4}$ with $a_1=13$.}
    \label{fig:StandHorzLabelsA1=13}
\end{figure}

\begin{proof}
    By Proposition \ref{prop:PermissibleA1Values}, the only
    permissible values of $a_2$ are 14, 15, and 21,
    and the corresponding permissible values of $S-a_2$
    are 20, 19, and 13, respectively.
    An analysis similar to that given in the proof of
    Proposition \ref{propHorizontalPairwiseBalancedCondition_A1=9}
    demonstrates that the labelings in Figure \ref{fig:StandHorzLabelsA1=13} are the only standard
    labelings on $\mathcal{K}_{4,4}$ with $a_1=13$.
\end{proof}

\begin{proposition} 
\label{propHorizontalPairwiseBalancedCondition_A1=15}
  Suppose $X=\{ x_{i,j} : (i,j) \in V(\mathcal{K}_{4,4})\}$ 
  is a standard $C_4$-face-magic labeling on $\mathcal{K}_{4,4}$ 
  such that $a_1=x_{1,1}+x_{1,2}=15$.
  Then $X$ is one of the 
  horizontally pairwise balanced labelings appearing in
  Figure \ref{fig:StandHorzLabelsA1=15}. 
\end{proposition}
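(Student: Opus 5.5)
This follows the same scheme as the proof of Proposition \ref{propHorizontalPairwiseBalancedCondition_A1=9}.

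\textbf{Step 1: the pairs.} With $a_1=15$ and $S-a_1=19$, the row of Figure \ref{permissiblevaluesa1} gives the candidate partition. The pairs summing to $15$ are $\{1,14\},\{2,13\},\{5,10\},\{6,9\}$. The pairs summing to $19$ are $\{3,16\},\{4,15\},\{7,12\},\{8,11\}$. First I will check that this partition of $\{1,\ldots,16\}$ is forced, as in the elimination arguments of Proposition \ref{prop:PermissibleA1Values}:
\begin{itemize}
\item $1$ can only pair with $14$, so $\{1,14\}$ is used.
\item The elements $3$ and $4$ cannot pair inside sum $15$, since their partners $12$ and $11$ must then go elsewhere. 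Chasing such forced memberships (for instance $16$ can only pair with $3$, since $15-16<0$) pins down all eight pairs.
\end{itemize}
By Proposition \ref{prop:extendedgesums}, each pair sits in some row $i$ on the positions $(i,2j-1),(i,2j)$. Pairs with sum $15$ lie in rows $1,3$ and pairs with sum $19$ lie in rows $2,4$.

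\textbf{Step 2: the cases for $a_2$.} By Proposition \ref{prop:PermissibleA1Values}, $a_2\in\{16,19,23\}$, so $S-a_2\in\{18,15,11\}$, and $a_3=2a_1-a_2$ is determined. For each case I build two columns' worth of paths, as in the case $a_1=9$.

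The sum-$15$ pairs must chain into two paths $p,q,r,s$ with sums alternating $a_1,a_2,a_1$. Figure \ref{establish_a2_valuesais15} already gives the $a_2$-links:
\begin{itemize}
\item for $a_2=16$: $14+2$ and $10+6$;
\item for $a_2=19$: $14+5$ and $6+13$;
\item for $a_2=23$: $14+9$ and $13+10$.
\end{itemize}
Similarly, the four sum-$19$ pairs must be joined into two paths whose middle links have sum $S-a_2$. I will enumerate the matchings $y+z=S-a_2$ with $y,z$ in distinct pairs.

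The closing condition also constrains the links. Reading down each column $j$ of the grid, consecutive vertical neighbours $(i,j),(i+1,j)$ together with the twisted edge must close up consistently with $a_3$ and $S-a_3$ on columns $4,1$. This forces which endpoint of each sum-$19$ path sits adjacent to which endpoint of the sum-$15$ path in each column.

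\textbf{Step 3: normalise and verify.} I place the paths into columns $1$ to $4$ with $x_{1,1}=1$ and $x_{1,2}=14$. I then use the properly symmetrized condition (2) of Definition \ref{Defn:ProperlySymmetrized} to decide which sum-$19$ pair goes in row $2$ versus row $4$. Finally, I apply elementary vertical cycle operations, as in Proposition \ref{prop:StandardLabel}, to reach the unique standard representative. I expect exactly one standard labeling per value of $a_2$, matching the three tables of Figure \ref{fig:StandHorzLabelsA1=15}. I will then check directly that $x_{1,j}+x_{2,j}=17=x_{3,j}+x_{4,j}$, which gives horizontal pairwise balance.

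\textbf{Main obstacle.} The hard step is the uniqueness within each $a_2$ case. I must show that the matching of the sum-$19$ pairs by links of sum $S-a_2$ is unique and compatible with the sum-$15$ paths under the twisted identification. When $a_2=19$ we have $S-a_2=15$, so several cross-links may seem available. I would handle this by the same forced-element chasing used in Figure \ref{fig:establish_a2_NonvaluesFora1Is17}: start from the link containing $16$ (or $3$) and show that any alternative choice duplicates a pair.
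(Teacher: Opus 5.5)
Your proposal is correct and takes the same approach as the paper. The paper's proof for $a_1=15$ just defers to the $a_1=9$ analysis: the pairs $\{1,14\},\{2,13\},\{5,10\},\{6,9\}$ and $\{3,16\},\{4,15\},\{7,12\},\{8,11\}$ are forced, and for $a_2=16,19,23$ the $(S-a_2)$-links are forced to be $3+15,\,7+11$; $3+12,\,4+11$; $3+8,\,4+7$ respectively (since $16$ can never lie in a link), which yields exactly the three labelings of Figure \ref{fig:StandHorzLabelsA1=15}.

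One small correction: the twisted faces impose nothing beyond Proposition \ref{prop:extendedgesums}, and they do not fix the orientation of any column path. Both orientations of each column are valid and are exchanged by $\widehat{V}_k$, so it is only your Step 3 normalization via the standard conditions that selects the representative.
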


\begin{figure}[hbt!]
    \centering
    \begin{tabular}{|c|c|c|c|} \hline
        13 &4   &9   &8   \\  \hline
        2  &15   &6   &11  \\  \hline
        14 &3   &10  &7   \\  \hline
        1  &16  &5   &12  \\  \hline
    \end{tabular}
\hspace{0.15in}\begin{tabular}{|c|c|c|c|} \hline
        10 &7   &9   &8   \\  \hline
        5  &12   &6   &11  \\  \hline
        14 &3   &13  &4   \\  \hline
        1  &16  &2   &15  \\  \hline
    \end{tabular}
 \hspace{0.15in}   \begin{tabular}{|c|c|c|c|} \hline
        6  &11  &5  &12  \\  \hline
        9  &8    &10  &7  \\  \hline
        14 &3   &13  &4  \\  \hline
        1  &16  &2   &15  \\  \hline
    \end{tabular}
    \caption{The standard horizontally pairwise balanced labelings on
    $\mathcal{K}_{4,4}$ with $a_1=15$.}
    \label{fig:StandHorzLabelsA1=15}
\end{figure}

\begin{proof}
    By Proposition \ref{prop:PermissibleA1Values}, the only
    permissible values of $a_2$ are 16, 19, and 23,
    and the corresponding permissible values of $S-a_2$
    are 18, 15, and 11, respectively.
    An analysis similar to that given in the proof of
    Proposition \ref{propHorizontalPairwiseBalancedCondition_A1=9}
    demonstrates that the labelings in Figure \ref{fig:StandHorzLabelsA1=15} are the only standard
    labelings on $\mathcal{K}_{4,4}$ with $a_1=15$.
\end{proof}

\begin{proposition} 
\label{propHorizontalPairwiseBalancedCondition_A1=1}
  Suppose $X=\{ x_{i,j} : (i,j) \in V(\mathcal{K}_{4,4})\}$ 
  is a standard $C_4$-face-magic labeling on $\mathcal{K}_{4,4}$ 
  such that $a_1=x_{1,1}+x_{1,2}=16$.
  Then $X$ is one of the 
  horizontally pairwise balanced labelings appearing in
  Figure \ref{fig:StandHorzLabelsA1=16}. 
\end{proposition}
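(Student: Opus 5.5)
The plan is to follow the proof of Proposition \ref{propHorizontalPairwiseBalancedCondition_A1=9} step by step: first pin down the unique pair partition for $a_1=16$, then the three permissible values of $a_2$, then assemble the four vertical cycles and normalize to the standard form.

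\emph{Step 1: the pair partition.} By Proposition \ref{prop:extendedgesums}, the sets $\{x_{2i-1,2j-1},x_{2i-1,2j}\}$ and $\{x_{2i,2j-1},x_{2i,2j}\}$ partition $\{1,\dots,16\}$. Four of these pairs sum to $a_1=16$ and four sum to $S-a_1=18$. I will show the partition is the one in Figure \ref{permissiblevaluesa1} by a forcing chain.
\begin{itemize}
\item The label $1$ can only lie in $\{1,15\}$, and $16$ can only lie in $\{2,16\}$.
\item Since $15$ is used, $3$ cannot go in $\{3,15\}$, so it goes in $\{3,13\}$.
\item Then $14$ must go in $\{4,14\}$.
\item Since $13$ is used, $5$ goes in $\{5,11\}$, and then $12$ goes in $\{6,12\}$.
\item Since $11$ is used, $7$ goes in $\{7,9\}$, and then $10$ goes in $\{8,10\}$.
\end{itemize}
So the partition consists of $\{1,15\},\{3,13\},\{5,11\},\{7,9\}$ (sum $16$) and $\{2,16\},\{4,14\},\{6,12\},\{8,10\}$ (sum $18$), and it is unique.

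\emph{Step 2: the edge sums.} Since $x_{1,1}=1$, we have $x_{1,2}=15$. By Proposition \ref{prop:PermissibleA1Values}, $a_2\in\{18,20,24\}$, so $x_{1,3}\in\{3,5,9\}$. The matchings recorded in Figure \ref{establish_a2_valuesais16} then determine the two four-vertex paths on the sum-$16$ pairs. In each path the inner edges have sum $a_2$, and by $2a_1=a_2+a_3$ the closing edge sums are $a_3=32-a_2$.

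For the sum-$18$ pairs, Proposition \ref{prop:extendedgesums} requires the connecting edge sums to be $S-a_2\in\{16,14,10\}$, with closing sums $S-a_3=a_2+2$. In each case I will check that the $18$-pairs can be joined into two paths in exactly one way. For example, when $a_2=18$ we need $S-a_2=16$, and the forced matchings are $16$ with nothing, so the path must start from $\{2,16\}$ via $2$. The pair containing the label matched to $x_{1,2}=15$ across the rows is fixed by the face sums $x_{1,2}+x_{2,2}+x_{1,3}+x_{2,3}=34$, exactly as $9$ was forced to pair with $15$ in the $a_1=9$ case. The remaining two $18$-pairs are then matched by the one admissible sum.

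\emph{Step 3: placement and standardization.} By Proposition \ref{prop:LabelEquiv}, the path through $1,15$ occupies the first cycle in the positions fixed by $y_{1,1}=1$. The complementary path is placed adjacently, using the face-sum constraint. The other two paths go in the remaining cycles, with the choice between them forced by condition (2), $\min\{y_{2,2},y_{2,3}\}<\min\{y_{4,2},y_{4,3}\}$. Proposition \ref{prop:StandardLabel} then gives a unique sequence of elementary vertical cycle operations that produces a standard labeling. I will compare the three results with Figure \ref{fig:StandHorzLabelsA1=16} and confirm directly that each has $x_{2i-1,j}+x_{2i,j}=17$, i.e.\ is horizontally pairwise balanced.

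\emph{Main obstacle.} The delicate point is uniqueness in Step 2, namely that for each $a_2$ the sum-$18$ pairs admit exactly one compatible chaining. I would settle this by short case checks of the form used in Figure \ref{eliminate_a2_values}, listing all cross sums between the remaining pairs and showing that only one equals $S-a_2$.
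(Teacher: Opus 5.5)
Your proposal is correct and follows the paper's route. The paper's proof only invokes $a_2\in\{18,20,24\}$ and says the $a_1=9$ analysis carries over, and you carry out exactly that analysis: forced partition, unique chainings, column~1 fixed by $x_{1,1}=1$, the other sum-$16$ path in column~3, columns~2 and~4 decided by condition~(2), and orientations fixed by the standard inequalities.

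Two small cautions. First, the face sums between neighbouring columns hold automatically once each column has the edge-sum pattern of Proposition~\ref{prop:extendedgesums}. So it is condition~(2), not a face-sum constraint, that puts the $\{2,16\}$-path in column~2, and horizontal balance is only a post hoc check. Second, when you compare with Figure~\ref{fig:StandHorzLabelsA1=16}, the bottom-right entry $x_{4,1}$ of the second and third arrays should read $14$, not $15$.
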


 \begin{figure}[hbt!]
    \centering
    \begin{tabular}{|c|c|c|c|} \hline
        13 &4   &9   &8   \\  \hline
        3  &14   &7   &10  \\  \hline
        15 &2   &11  &6   \\  \hline
        1  &16  &5   &12  \\  \hline
    \end{tabular}
\hspace{0.15in}\begin{tabular}{|c|c|c|c|} \hline
        11 &6   &9   &8   \\  \hline
        5  &12   &7   &10  \\  \hline
        15 &2   &13  &4   \\  \hline
        1  &16  &3   &15  \\  \hline
    \end{tabular}
 \hspace{0.15in}   \begin{tabular}{|c|c|c|c|} \hline
        7  &10  &5  &12  \\  \hline
        9  &8    &11  &6  \\  \hline
        15 &2   &13  &4  \\  \hline
        1  &16  &3   &15  \\  \hline
    \end{tabular}
    \caption{The standard horizontally pairwise balanced labelings on
    $\mathcal{K}_{4,4}$ with $a_1=16$.}
    \label{fig:StandHorzLabelsA1=16}
\end{figure}

\begin{proof}
    By Proposition \ref{prop:PermissibleA1Values}, the only
    permissible values of $a_2$ are 18, 20, and 24,
    and the corresponding permissible values of $S-a_2$
    are 16, 14, and 10, respectively.
    An analysis similar to that given in the proof of
    Proposition \ref{propHorizontalPairwiseBalancedCondition_A1=9}
    demonstrates that the labelings in Figure \ref{fig:StandHorzLabelsA1=16} are the only standard
    labelings on $\mathcal{K}_{4,4}$ with $a_1=16$.
\end{proof}

\begin{proposition} 
\label{propVerticalPairwiseBalancedCondition_A1=17}
  Suppose $X=\{ x_{i,j} : (i,j) \in V(\mathcal{K}_{4,4})\}$ 
  is a standard $C_4$-face-magic labeling on $\mathcal{K}_{4,4}$ 
  such that $a_1=x_{1,1}+x_{1,2}=17$.
  Then $X$ is one of the twelve (vertically pairwise balanced) labelings displayed 
  in Figures \ref{fig:StandVertLabelsA2=18}, \ref{fig:StandVertLabelsA2=19},
  \ref{fig:StandVertLabelsA2=21}, and \ref{fig:StandVertLabelsA2=25}.
  For convenience, we display the 
  $4 \times 4$ Klein bottle grid graph $\mathcal{K}_{4,4}$
  as a checkerboard where each cell represents a vertex
  and the label in the cell represents the label of the vertex.

  \begin{figure}[hbt!]
    \centering
    \begin{tabular}{|c|c|c|c|} \hline
       15  &6  &13  &8  \\  \hline
        2  &11   &4   &9  \\  \hline
       16  &5   &14  &7  \\  \hline
        1  &12  &3   &10  \\  \hline
    \end{tabular}
\hspace{0.15in}\begin{tabular}{|c|c|c|c|} \hline
       15  &4  &11  &8  \\  \hline
        2  &13   &6   &9  \\  \hline
       16  &3   &12  &7  \\  \hline
        1  &14  &5   &10  \\  \hline
    \end{tabular}
 \hspace{0.15in}   \begin{tabular}{|c|c|c|c|} \hline
       15  &4  &9  &6  \\  \hline
        2  &13   &8   &11  \\  \hline
       16  &3   &10  &5  \\  \hline
        1  &14  &7   &12  \\  \hline
    \end{tabular}
    \caption{The standard vertically pairwise balanced labelings on
    $\mathcal{K}_{4,4}$ with $a_2=18$.}
    \label{fig:StandVertLabelsA2=18}
\end{figure}

\begin{figure}[hbt!]
    \centering
    \begin{tabular}{|c|c|c|c|} \hline
       14  &7  &13  &8  \\  \hline
        3  &10   &4   &9  \\  \hline
       16  &5   &15  &6  \\  \hline
        1  &12  &2   &11  \\  \hline
    \end{tabular}
\hspace{0.15in}\begin{tabular}{|c|c|c|c|} \hline
       14  &4  &10  &8  \\  \hline
        3  &13   &7   &9  \\  \hline
       16  &2   &12  &6  \\  \hline
        1  &15  &5   &11  \\  \hline
    \end{tabular}
\hspace{0.15in}\begin{tabular}{|c|c|c|c|} \hline
       14  &4  &9  &7  \\  \hline
        3  &13   &8   &10  \\  \hline
       16  &2   &11  &5  \\  \hline
        1  &15  &6   &12  \\  \hline
    \end{tabular}
    \caption{The standard vertically pairwise balanced labelings on
    $\mathcal{K}_{4,4}$ with $a_2=19$.}
    \label{fig:StandVertLabelsA2=19}
\end{figure}

\begin{figure}[hbt!]
    \centering
    \begin{tabular}{|c|c|c|c|} \hline
       12  &7  &11  &8  \\  \hline
        5  &10   &6   &9  \\  \hline
       16  &3   &15  &4  \\  \hline
        1  &14  &2   &13  \\  \hline
    \end{tabular}
\hspace{0.15in}\begin{tabular}{|c|c|c|c|} \hline
       12  &6 &10  &8  \\  \hline
        5  &11   &7   &9  \\  \hline
       16  &2   &14  &4  \\  \hline
        1  &15  &3   &13  \\  \hline
    \end{tabular}
\hspace{0.15in}\begin{tabular}{|c|c|c|c|} \hline
       12  &6  &9  &7  \\  \hline
        5  &11   &8   &10  \\  \hline
       16  &2   &13  &3  \\  \hline
        1  &15  &4   &14  \\  \hline
    \end{tabular}
    \caption{The standard vertically pairwise balanced labelings on
    $\mathcal{K}_{4,4}$ with $a_2=21$.}
    \label{fig:StandVertLabelsA2=21}
\end{figure}

\begin{figure}[hbt!]
    \centering
    \begin{tabular}{|c|c|c|c|} \hline
       8  &11  &7  &12  \\  \hline
        9  &6   &10   &5  \\  \hline
       16  &3   &15  &4  \\  \hline
        1  &14  &2   &13  \\  \hline
    \end{tabular}
\hspace{0.15in} \begin{tabular}{|c|c|c|c|} \hline
       8  &10 &6  &12  \\  \hline
        9  &7   &11   &5  \\  \hline
       16  &2   &14  &4  \\  \hline
        1  &15  &3   &13  \\  \hline
    \end{tabular}
\hspace{0.15in}  \begin{tabular}{|c|c|c|c|} \hline
       8  &10  &5  &11  \\  \hline
        9  &7   &12   &6  \\  \hline
       16  &2   &13  &3  \\  \hline
        1  &15  &4   &14  \\  \hline
    \end{tabular}
    \caption{The standard vertically pairwise balanced labelings on
    $\mathcal{K}_{4,4}$ with $a_2=25$.}
    \label{fig:StandVertLabelsA2=25}
\end{figure}
\end{proposition}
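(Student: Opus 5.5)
We set up the structure using Proposition \ref{prop:extendedgesums} with $a_1=17=\tfrac12 S$. Then $S-a_1=17$, so every horizontal-in-column pair $\{x_{i,2j-1},x_{i,2j}\}$ sums to $17$. Hence $X$ is vertically pairwise balanced, and the eight pairs are exactly $\{k,17-k\}$ for $1\le k\le 8$. Since $a_3=2a_1-a_2=S-a_2$, we get \eqref{eqn:EdgeSumsAreA2}: in every row $i$ both $x_{i,2}+x_{i,3}$ and $x_{i,4}+x_{i,1}$ equal $a_2$ or $S-a_2$, with the value alternating in $i$.

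The plan is to view each row $i$ as a 4-cycle $x_{i,1},x_{i,2},x_{i,3},x_{i,4}$. Its consecutive edge sums alternate between $17$ and $c_i$, where $c_i\in\{a_2,S-a_2\}$. Such a row is determined by the unordered pair of complementary sets $\{k,17-k\},\{\ell,17-\ell\}$ that it uses, together with the choice of which elements are joined by the $c_i$-edges. By Proposition \ref{prop:PermissibleA1Values}, $a_2\in\{18,19,21,25\}$. For each of these, Figure \ref{fig:establish_a2_valuesFora1Is17} already gives one matching of the eight sets $A_1,\dots,A_8$ into four pairs. I will redo that forcing argument more carefully to enumerate \emph{all} such matchings, since the pairing $A_1=\{1,16\}$ with the set containing $a_2-16$ is forced, but later choices may branch.

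For a fixed $a_2$, the remaining six sets must be split into two rows using sum $a_2$ (odd rows, via $x_{i,2}+x_{i,3}$) and two rows using $S-a_2$ (even rows). Note that $\{k,17-k\}$ and $\{\ell,17-\ell\}$ admit an element sum $a_2$ exactly when $k+\ell\in\{a_2,34-a_2\}$ or $|k-\ell|\equiv\pm(a_2-17)$. So there is automatically a cross pair summing to $34-a_2$ as well, and each matched pair of sets can serve as either an odd or an even row. I will therefore enumerate perfect matchings of the remaining six sets in this compatibility graph (expecting exactly three per $a_2$), and then decide which pair goes in row $3$ and which in rows $2,4$. The properly symmetrized condition $\min\{x_{2,2},x_{2,3}\}<\min\{x_{4,2},x_{4,3}\}$ breaks the symmetry between rows $2$ and $4$, and the row-$1$/row-$3$ face conditions (the twisted edge from row $4$ to row $1$) must be checked. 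Within each row the orientation is then fixed by the standard conditions of Definition \ref{defn:StandardLabel}, using Proposition \ref{prop:StandardLabel}.

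The main obstacle is the precise bookkeeping of which rows are compatible: the twisted identification $(4,j)\sim(1,5-j)$ imposes that the face sums between rows $4$ and $1$ are consistent with the placement in row $1$. This must be verified so that exactly three labelings survive for each of the four values of $a_2$, giving twelve in total. Finally, I will check directly that each listed array is face-magic, which establishes existence.
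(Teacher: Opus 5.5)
Your method is the paper's: vertical balance; the eight sets $\{k,17-k\}$ grouped two per column (your ``row'') through a cross pair summing to $a_2$ or $34-a_2$; column~1 forced to contain $\{1,16\}$ and the set containing $a_2-16$; proper symmetrization separating columns~2 and~4; and the standard conditions fixing orientations. Carried out honestly, it proves the statement. However, your plan puts the source of the three labelings per $a_2$ in the wrong place, in two ways. The two expectations are also inconsistent with each other, since three matchings with no pruning would give nine labelings, not three.

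\textbf{The perfect matching is unique, not threefold.} In your compatibility graph on $\{1,\dots,8\}$:
\begin{itemize}
\item for $a_2=18$ the edges are the pairs with $|k-\ell|=1$, which form a path;
\item for $a_2=19$ they form the path $1,3,5,7,8,6,4,2$;
\item for $a_2=21$ the vertices $1,2,3,4$ have the single neighbours $5,6,7,8$;
\item for $a_2=25$ the graph is already the matching $\{1,8\},\{2,7\},\{3,6\},\{4,5\}$.
\end{itemize}
The three labelings come instead from the free choice of which of the three matched pairs occupies column~3. After that:
\begin{itemize}
\item the condition $\min\{x_{2,2},x_{2,3}\}<\min\{x_{4,2},x_{4,3}\}$ decides columns~2 and~4;
\item because $c_i\neq 17$, each matched pair of sets has exactly one cross pair summing to $c_i$;
\item so each column admits exactly two arrangements, exchanged by $\widehat{V}_i$, and the standard conditions keep one.
\end{itemize}

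\textbf{The twisted faces are not an obstacle.} Write $c_i=x_{i,2}+x_{i,3}$. The four faces joining column~4 to column~1 have sums $17+17$, $c_4+c_1$, $17+17$ and $68-c_4-c_1$. They therefore only require $c_1+c_4=34$. This is already part of the alternation $c_1=c_3=a_2$, $c_2=c_4=34-a_2$ from Proposition~\ref{prop:extendedgesums}. Nothing is pruned there, and your final direct check of the twelve arrays will succeed.
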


\begin{proof}
By Proposition \ref{prop:PermissibleA1Values},
the only permissible values of $a_2$ are $a_2\in\{18,19,21,25\}$.
Also, for each permissible value of $a_2$, there is only one way
to choose a label from each set $\{i,17-i\}$, for $1\le i\le 8$,
so that there are labels from a pair of these sets that add to $a_2$;
these labels are highlighted in blue 
and red in Figure \ref{fig:establish_a2_valuesFora1Is17}.
We illustrate the construction of the three standard labelings in Figure \ref{fig:StandVertLabelsA2=18} when $a_2=18$.
The construction of the standard labelings in Figures \ref{fig:StandVertLabelsA2=19}, 
\ref{fig:StandVertLabelsA2=21} and \ref{fig:StandVertLabelsA2=25} are similar;
we leave the details of the proof of the cases $a_2\in\{19,\,21,\,25\}$ to the reader.
Figure \ref{fig:establish_a2_valuesFora1Is17} highlights the desired label
from $A_2$ in red and the labels from the other sets are highlighted in blue. 
For convenience, we duplicate row 2 of Figure \ref{fig:establish_a2_valuesFora1Is17} below.\\
\hspace{2in}\\
\renewcommand{\arraystretch}{1.5}
\begin{tabular}{|c|c|c|c|c|c|} \hline
$A_1$ & $A_2$ & $\textcolor{blue}{x_{1,2}} +\textcolor{red}{x_{1,3}}=a_2$ & $A_3,\, A_4$ &  $A_5,\, A_6$ 
&  $A_7,\, A_8$\\  \hline
$\{1, \textcolor{blue}{16}\}$ &  $\{\textcolor{red}{2},15\}$ & $\textcolor{blue}{16}+\textcolor{red}{2}=18$ 
& $\{ 3,\textcolor{blue}{14}\}$, $\{ \textcolor{blue}{4},13\}$  
&  $\{ 5, \textcolor{blue}{12}  \}$, $\{ \textcolor{blue}{6}, 11\}$  
&  $\{7, \textcolor{blue}{10} \}$ , $\{\textcolor{blue}{8}, 9 \}$ \\  \hline
\end{tabular}\\
\hspace{2in}\\

Since $x_{1,1}=1$ in a standard labeling, column 1 for all 3 labelings is given below. 
\begin{center}
\begin{tabular}{|c|c|c|c|} \hline
       15  &  &  &  \\  \hline
        2  &  &   &  \\  \hline
       16  &   &  & \\  \hline
        1  &  &   &  \\  \hline
    \end{tabular}
    \end{center}
\hspace{2in}\\
There are 3 ways to fill column 3. We can choose the pair sets $A_3, A_4$ or $A_5, A_6$ or $A_7, A_8$. 
The choice of the pair sets $A_3, A_4$ or $A_5, A_6$ or $A_7, A_8$
that we place into column 3, while maintaining the standard condition that $y_{3,1}<y_{3,4}$,
determines which of the three standard labelings we obtain in Figure \ref{fig:StandVertLabelsA2=18}.
We show that the choice of the set pair $A_3, A_4$ in column 3 gives rise to the first
standard labeling in Figure \ref{fig:StandVertLabelsA2=18}.
We leave the details of the proof of the other two standard
labelings in Figure \ref{fig:StandVertLabelsA2=18} that arise from the choices of $A_5, A_6$ or $A_7, A_8$ to the reader.
Then columns 1 and 3 of the standard labeling arising from the choice of $A_3, A_4$ are given below.\\
\hspace{2in}\\
\begin{center}
\begin{tabular}{|c|c|c|c|} \hline
       15  &  & 13 &  \\  \hline
        2  &  & 4  &   \\  \hline
       16  &   & 14 & \\  \hline
        1  &  & 3  &  \\  \hline
    \end{tabular}
\end{center}
    We will fill the remaining columns so that the properly symmetrized conditions hold, namely:
$\min\big\{y_{2,2}, y_{2,3} \big\}<\min\big\{y_{4,2}, y_{4,3} \big\}$. Thus we must place $A_5, A_6$ in column 2 and $A_7, A_8$ in column 4.\\
\hspace{2in}\\
Also, we will place these labels so that the following standard conditions hold: $y_{2,1}>y_{2,4}$ and $y_{4,1}>y_{4,4}$.
\hspace{2in}\\
\begin{center}
\begin{tabular}{|c|c|c|c|} \hline
       15  & 6 & 13 & 8 \\  \hline
        2  & 11 & 4  & 9 \\  \hline
       16  & 5 & 14 & 7\\  \hline
        1  & 12 & 3  &  10\\  \hline
    \end{tabular}
    \end{center}

\end{proof}

\begin{theorem}\label{binaryhd_ks}
 Each of the twelve standard horizontally pairwise balanced 
$C_4$-face-magic labelings on $\mathcal{K}_{4,4}$ can be expressed in terms of
$d_1,\, d_2,\, d_3,$ and $d_4$ as displayed in Figure \ref{fig:SchemeOne} where the values of $d_k$\ are:
\begin{align*}
    d_1 &\in \{ 1,\, 2,\, 4,\, 8\} &&
    d_2 \in \{ 1,\, 2,\, 4,\, 8\}\setminus \{d_1\} \\
    d_3 &= \min\big(\{ 1,\, 2,\, 4,\, 8\}\setminus \{d_1,\, d_2\}\big)
    &&d_4 = \max\big(\{ 1,\, 2,\, 4,\, 8\}\setminus \{d_1,\, d_2\}\big)
\end{align*}
\end{theorem}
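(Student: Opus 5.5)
The approach is to treat Figure~\ref{fig:SchemeOne} as a template. Each cell holds $1$ plus a sum of a sub-collection of $\{d_1,d_2,d_3,d_4\}$. The cell $(1,1)$ gets the empty sum, which gives $1$. The template also pairs cells: since horizontally pairwise balanced means $x_{2i-1,j}+x_{2i,j}=17$, the partner of a cell with sum $1+\sum_{k\in T}d_k$ carries $1+\sum_{k\notin T}d_k = 16-\sum_{k\in T}d_k$. The point is that $\{d_1,d_2,d_3,d_4\}=\{1,2,4,8\}$ is a set of binary digits. So $x\mapsto x-1$ identifies $\{1,\dots,16\}$ with the subsets of $\{d_1,\dots,d_4\}$, and complementing a subset realizes the map $x\mapsto 17-x$. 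The proof has three parts: (a) the template gives a valid standard labeling for every admissible choice; (b) distinct choices give distinct labelings; (c) a count shows every standard horizontally pairwise balanced labeling arises this way.

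For (a), first note bijectivity. The template uses each subset of $\{d_1,\dots,d_4\}$ exactly once, so for any ordering of $\{1,2,4,8\}$ the labels are exactly $1,\dots,16$. Next, the face sums. By Proposition~\ref{prop:extendedgesums} it suffices to check the edge sums. The horizontal pairs $\{x_{2i-1,2j-1},x_{2i-1,2j}\}$ must sum to $a_1$ and their partners to $34-a_1$. The remaining pairs must sum to $a_2$, $a_3=2a_1-a_2$, $34-a_2$, $34-a_3$. Each of these sums is a formal linear expression in the $d_k$, and it holds identically in the $d_k$ provided the template is chosen so that each such pair has the same multiset of $d$'s; this is a finite symbolic check. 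Finally, the conditions of Definitions~\ref{Defn:ProperlySymmetrized} and \ref{defn:StandardLabel} are comparisons between cells. In the template I expect each compared pair to have the form $A$ versus $A+d_4$, or $A+d_3$ versus $A+d_4$, and similar. These comparisons then hold because $d_3<d_4$, or because $d_4=8$ exceeds the sum of the other possible terms.

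For (b) and (c), there are $4$ choices of $d_1$ and $3$ of $d_2$, while $d_3,d_4$ are forced, giving $12$ choices in all. Distinct choices give distinct labelings, because the labels of two fixed template cells (those carrying exactly $d_1$ and exactly $d_2$) recover $d_1$ and $d_2$. On the other side, Propositions~\ref{propHorizontalPairwiseBalancedCondition_A1=9}--\ref{propHorizontalPairwiseBalancedCondition_A1=1} show there are exactly $3+3+3+3=12$ standard horizontally pairwise balanced labelings. Hence the two sets of twelve coincide. As a cross-check, I would match explicitly: $a_1=x_{1,2}+1$ should equal $2+\sum(\text{terms of the cell }x_{1,2})$, so $d_1$ should determine $a_1\in\{9,13,15,16\}$ and $d_2$ should then determine $a_2$. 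For example, $d_1=8$ gives $x_{1,2}=8$ and $a_1=9$.

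The main obstacle is pinning down the exact subset pattern of Figure~\ref{fig:SchemeOne} and confirming it is consistent with all twelve listed labelings. In particular, each standard-ordering inequality must hold uniformly in the $d_k$ rather than only case by case. I would first read off the template from the $a_1=9$, $a_2=10$ labeling of Figure~\ref{fig:LabelA1is9A2is10}, writing each $x-1$ in binary. I would then test it against one labeling from each remaining figure. If a uniform inequality fails, I would fall back to verifying the twelve instances directly.
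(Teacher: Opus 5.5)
Your argument is correct, but it runs in the opposite direction from the paper's proof.

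\textbf{The paper's route.} The paper argues analytically. It takes the twelve arrays listed in Figures~\ref{fig:StandHorzLabelsA1=9}--\ref{fig:StandHorzLabelsA1=16} as given and defines $d_1=17-a_1$ and $d_2=a_2-a_1$, so that $x_{2,2}=1+d_1$ and $x_{1,3}=1+d_2$ follow algebraically. It then checks $x_{3,1}=1+d_3$ and $x_{3,4}=1+d_4$ by table lookup in those figures. Finally it derives the remaining entries from horizontal balance and the edge sums of Proposition~\ref{prop:extendedgesums}.

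\textbf{Your route.} You verify the template of Figure~\ref{fig:SchemeOne} symbolically and finish by counting. The classification propositions are used only as an upper bound. This buys you two things.
\begin{itemize}
\item You get a uniform proof that all twelve template arrays really are standard horizontally pairwise balanced $C_4$-face-magic labelings. The paper takes this from the sketched constructions in the propositions.
\item You do not depend on the entries of the listed arrays, several of which are misprinted. The first array for $a_1=13$ repeats $11$ where $x_{4,3}=13$ is meant. The last two arrays for $a_1=16$ repeat $15$ where $x_{4,1}=14$ is meant.
\end{itemize}
In return, the paper's route explains the parameters in terms of the edge sums. In your template this reads $a_1=2+d_2+d_3+d_4=17-d_1$, $a_2=a_1+d_2$ and $a_3=a_1-d_2$.

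\textbf{Points to tighten.}
\begin{itemize}
\item There is nothing to reconstruct, since the template is explicit. Read $x_{i,j}$ in column $i$, with row $j=1$ at the bottom. Columns $1$ and $3$ carry the same $d$-multisets on each row pair $(1,2),(2,3),(3,4),(4,1)$. The $(1,2)$ and $(3,4)$ sums both equal $2+d_2+d_3+d_4$. Columns $2$ and $4$ are the complements $17-x$ of columns $1$ and $3$.
\item The order conditions reduce to $1+d_1<1+d_1+d_3$, $16>1+d_1+d_2$, $1+d_3<1+d_4$ and $1+d_1+d_2+d_3<1+d_1+d_2+d_4$. These need only positivity and $d_3<d_4$. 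Your alternative reason ``$d_4=8$'' is not available in general: for $d_1=8$ and $d_2=4$ one has $d_4=2$.
\item The claim that the edge sums suffice is the converse of Proposition~\ref{prop:extendedgesums}, not the proposition itself. It is an easy check, but it must include the twisted faces $\{(4,j),(4,j+1),(1,5-j),(1,4-j)\}$. There the row pairs $(1,2)$ and $(3,4)$ of column $1$ are interchanged, which is exactly why their sums must agree.
\item For the upper bound of twelve you also need Proposition~\ref{prop:PermissibleA1Values}. You must also note that $a_1=17$ cannot occur for a horizontally balanced labeling, since then $x_{1,2}=16=17-x_{1,1}=x_{2,1}$.
\end{itemize}
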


\begin{figure}[hbt!]
\centering

\renewcommand{\arraystretch}{2}
\begin{tabular}{|c|c|c|c|}\hline
$\textcolor{red}{1+d_3+d_4}$ & $\textcolor{blue}{1+d_1+d_2}$ & $\textcolor{red}{1+d_4}$ & $\textcolor{blue}{1+d_1+d_2+d_3}$ \\ \hline
$\textcolor{blue}{1+d_2}$ & $\textcolor{red}{1+d_1+d_3+d_4}$ & $\textcolor{blue}{1+d_2+d_3}$ & $\textcolor{red}{1+d_1+d_4}$\\  \hline
$\textcolor{red}{1+d_2+d_3+d_4}$ & $\textcolor{blue}{1+d_1}$ & $\textcolor{red}{1+d_2+d_4}$ & $\textcolor{blue}{1+d_1+d_3}$\\ \hline
$\textcolor{blue}{1}$ &  $\textcolor{red}{16}$ &  $\textcolor{blue}{1+d_3}$  & $\textcolor{red}{1+d_1+d_2+d_4}$\\ \hline
\end{tabular}

\caption{The twelve standard horizontally pairwise balanced 
$C_4$-face-magic labelings on $\mathcal{K}_{4,4}$. 
}\label{fig:SchemeOne}
\end{figure}

\begin{proof}
In this proof we will define $d_1, d_2, d_3,$ and $d_4$ and verify that the labelings  $\{x_{i,j} : \ 1\le i\le 4, 1\le j\le 4\}$ have the expressions indicated in Figure \ref{fig:SchemeOne}. By Proposition \ref{prop:extendedgesums} we know that $x_{1,2}+x_{1,1}=a_1$. Also, since the labeling of $\mathcal{K}_{4,4}$ is horizontally balanced, $x_{2,2}+x_{1,2}=\frac{1}{2}S=17$; hence $x_{2,2}=18-a_1$ by also using the fact that $x_{1,1}=1$. As shown in Figure \ref{fig:SchemeOne} we want $x_{2,2}=d_1+1=d_1+x_{1,1}$. Thus we will define
\begin{equation}
  d_1=x_{2,2}-x_{1,1}=(18-a_1)-1=17-a_1.  \label{defofd1}
\end{equation}
The table in Figure \ref{confirmd_1} confirms that $x_{2,2}=d_1+1$ as displayed in the labelings in Figures \ref{fig:StandHorzLabelsA1=9}, \ref{fig:StandHorzLabelsA1=13}, \ref{fig:StandHorzLabelsA1=15}, and \ref{fig:StandHorzLabelsA1=16}. 

\begin{figure}[hbt!]
\centering
\renewcommand{\arraystretch}{1.5}
\begin{tabular}{|c|c|c|} \hline
       $a_1$  & $d_1$ & $x_{2,2}$ \\  \hline
        9  & 8 & 9  \\  \hline
       13  &  4 & 5 \\  \hline
        15  & 2 & 3  \\  \hline
        16 & 1 & 2 \\ \hline
    \end{tabular}
\caption{Confirming that $x_{2,2}=d_1+1$.}\label{confirmd_1}
\end{figure}
By Proposition \ref{prop:extendedgesums}, $x_{1,2}+x_{1,3}=a_2$; so, since $x_{1,2}=a_1-1$, then $x_{1,3}=(a_2-a_1)+1$. We will therefore define 
\begin{equation}
d_2=a_2-a_1.   \label{defofd2}
\end{equation}
We confirm that $x_{1,3}=d_2+1$ in Figure \ref{confirmd_2} for each of the standard labelings given in Figures 
\ref{fig:StandHorzLabelsA1=9}, \ref{fig:StandHorzLabelsA1=13}, \ref{fig:StandHorzLabelsA1=15}, and \ref{fig:StandHorzLabelsA1=16}.

\begin{figure}[hbt!]
\centering
\renewcommand{\arraystretch}{1.5}
\begin{tabular}{|c|c|c|c|c|} \hline
  $a_1$ & $a_2$  & $d_1$ & $d_2$ & $x_{1,3}$ \\  \hline
    9  &  10 & 8 & 1 & 2  \\  \hline
     9 &  11 & 8  &  2 & 3 \\  \hline
     9 &    13 & 8 & 4 & 5  \\  \hline
    \end{tabular}\hspace{0.15in}
  \begin{tabular}{|c|c|c|c|c|} \hline
  $a_1$ & $a_2$ & $d_1$ & $d_2$ & $x_{1,3}$ \\  \hline
    13  &  14 & 4 & 1 & 2  \\  \hline
     13 &  15 & 4  &  2 & 3 \\  \hline
     13 &    21 & 4  & 8 & 9  \\  \hline
    \end{tabular}  
\hspace{2in}\\
\hspace{2in}\\
\hspace{2in}\\
\begin{tabular}{|c|c|c|c|c|} \hline
  $a_1$ & $a_2$ & $d_1$ & $d_2$ & $x_{1,3}$ \\  \hline
    15  &  16 & 2 & 1 & 2  \\  \hline
     15 &  19 & 2  &  4 & 5 \\  \hline
     15 &    23 & 2  & 8 & 9  \\  \hline
    \end{tabular}\hspace{0.15in}
\begin{tabular}{|c|c|c|c|c|} \hline
  $a_1$ & $a_2$ & $d_1$  & $d_2$ & $x_{1,3}$ \\  \hline
    16  &  18 & 1  & 2 & 3  \\  \hline
     16 &  20 & 1  &  4 & 5 \\  \hline
     16 &    24 & 1  & 8 & 9  \\  \hline
    \end{tabular}
    \caption{Confirming that $x_{1,3}=d_2+1$.}\label{confirmd_2}
\end{figure}

The tables in Figures \ref{confirmd_1} and \ref{confirmd_2} show that $d_2 \in \{ 1,\, 2,\, 4,\, 8\}\setminus \{d_1\}$. We define
\begin{equation*}
  d_3= \min\big(\{ 1,\, 2,\, 4,\, 8\}\setminus \{d_1,\, d_2\}\big)\hspace{.1in}\text{and}\hspace{.1in}d_4= \max\big(\{ 1,\, 2,\, 4,\, 8\}\setminus \{d_1,\, d_2\}\big).   
\end{equation*}
The table in Figure \ref{confirmd_3} verifies that $x_{3,1}=d_3+1$ and $x_{3,4}=d_4+1$ for each of the
standard labelings given in Figures \ref{fig:StandHorzLabelsA1=9}, \ref{fig:StandHorzLabelsA1=13}, 
\ref{fig:StandHorzLabelsA1=15}, and \ref{fig:StandHorzLabelsA1=16}.

\begin{figure}[hbt!]
\centering
\renewcommand{\arraystretch}{1.5}
\begin{tabular}{|c|c|c|c|c|c|c|c|} \hline
  $a_1$ & $a_2$  & $d_1$ & $d_2$ & $\textcolor{blue}{d_3}$ & $\textcolor{red}{d_4}$ & $x_{3,1}$  & $x_{3,4}$\\  \hline
    9  &  10  & 8 & 1 & \textcolor{blue}{2} & \textcolor{red}{4} & 3  & 5 \\  \hline
     9 &  11  &  8 & 2 & \textcolor{blue}{1} & \textcolor{red}{4} & 2 & 5 \\  \hline
     9 &    13  & 8 & 4 & \textcolor{blue}{1} & \textcolor{red}{2} & 2 & 3  \\  \hline
     13  &  14  & 4 & 1 & \textcolor{blue}{2} & \textcolor{red}{8} & 3  & 9 \\  \hline
     13 &  15  &  4 & 2 & \textcolor{blue}{1} & \textcolor{red}{8} & 2 & 9 \\  \hline
     13 &    21  & 4 & 8 & \textcolor{blue}{1} & \textcolor{red}{2} & 2 & 3  \\  \hline
    \end{tabular}\hspace{0.15in}
\begin{tabular}{|c|c|c|c|c|c|c|c|} \hline
  $a_1$ & $a_2$  & $d_1$ & $d_2$ & $\textcolor{blue}{d_3}$ & $\textcolor{red}{d_4}$ & $x_{3,1}$  & $x_{3,4}$\\  \hline
    15  &  16  & 2 & 1 & \textcolor{blue}{4} & \textcolor{red}{8} & 5  & 9 \\  \hline
     15 &  19  &  2 & 4 & \textcolor{blue}{1} & \textcolor{red}{8} & 2 & 9 \\  \hline
     15 &   23  & 2 & 8 & \textcolor{blue}{1} & \textcolor{red}{4} & 2 & 5  \\  \hline
     16  &  18  & 1 & 2 & \textcolor{blue}{4} & \textcolor{red}{8} & 5  & 9 \\  \hline
     16 &  20  &  1 & 4 & \textcolor{blue}{2} & \textcolor{red}{8} & 3 & 9 \\  \hline
     16 &    24  & 1 & 8 & \textcolor{blue}{2} & \textcolor{red}{4} & 3 & 5  \\  \hline
    \end{tabular}

\caption{Confirming that $x_{3,1}=d_3+1$ and $x_4=d_4+1$.}\label{confirmd_3}
\end{figure}
\hspace{2in}\\
We will now use the expressions for $x_{2,2}, x_{1,3}$ and $x_{3,1}$, the known values of $x_{1,1}=1$ and $x_{2,1}=16$, and horizontally pairwise balanced property to derive expressions for the remaining labels for these 12 graphs. Note that $\textcolor{red}{1+d_1+d_2+d_3+d_4}=\textcolor{red}{16}$. Since $x_{3,1}+x_{4,1}=17$, it follows that 
\begin{equation}\label{horizbal1}
  x_{4,1}=17-x_{3,1}=17-( 1+d_3)= \textcolor{red}{16}- d_3 =1+d_1+d_2+d_4.  
\end{equation}
By using computations similar to (\ref{horizbal1}), 
$x_{1,2}=17-x_{2,2}=1+d_2+d_3+d_4$,\\  $x_{2,3}=17-x_{1,3}=1+d_1+d_3+d_4$, and
$x_{4,4}=17-x_{3,4}=1+d_1+d_2+d_3$. By (\ref{defofd1}) $\textcolor{blue}{a_1}=\textcolor{blue}{17-d_1}$. Since $x_{3,1}+x_{3,2}=\textcolor{blue}{a_1}$, we have that
\begin{equation*}
    x_{3,2}=\textcolor{blue}{a_1}-x_{3,1}=(\textcolor{blue}{17-d_1})-(1+d_3)=\textcolor{red}{16}-d_1-d_3=1+d_2+d_4.
\end{equation*}
Similarly, $x_{1,4}=\textcolor{blue}{a_1}-x_{1,3}=1+d_3+d_4$. By Proposition \ref{prop:extendedgesums} we have that
\begin{align*}
    x_{4,2}&=(S-\textcolor{blue}{a_1})-x_{4,1}\\
           &=34-(\textcolor{blue}{17-d_1})-(1+d_1+d_2+d_4)=\textcolor{red}{16}-d_2-d_4=1+d_1+d_3.
\end{align*}
By  (\ref{defofd2}), which implies that $a_2=d_2+a_1$, we obtain
\begin{align*}
     x_{3,3}&=a_2-x_{3,2}=(d_2+\textcolor{blue}{a_1})-x_{3,2}
        =\big(d_2+(\textcolor{blue}{17-d_1})\big)-(1+d_2+d_4)\\
        &=\textcolor{red}{16}-d_1-d_4= 1+d_2+d_3
\end{align*}
Finally, by computations similar to (\ref{horizbal1}) we obtain that
$x_{4,3}=17-x_{3,3}=1+d_1+d_4$ and $x_{2,4}=17-x_{1,4}=1+d_1+d_2$.
\end{proof}

\begin{definition}
In order to formulate an analagous result to Theorem \ref{binaryhd_ks} for the 12 standard vertically pairwise balanced $C_4$-face-magic labelings 
on $\mathcal{K}_{4,4}$ we will need to modify the third labeling in each of Figures \ref{fig:StandVertLabelsA2=18}, \ref{fig:StandVertLabelsA2=19}, 
\ref{fig:StandVertLabelsA2=21}, and \ref{fig:StandVertLabelsA2=25} by applying $\widehat{V}_3$ and $\widehat{V}_4$ to each of those labeling. We will refer 
to the resulting labeling as a \it{pseudo-standard}\rm\ labeling. The four pseudo-standard labelings are listed in Figure \ref{pseudostandardlabeling12} for
reference. We will also refer to the first two labelings in 
Figures \ref{fig:StandVertLabelsA2=18}, \ref{fig:StandVertLabelsA2=19}, 
\ref{fig:StandVertLabelsA2=21}, and \ref{fig:StandVertLabelsA2=25}
as \it{fundamentally standard}\rm\ labelings.  
\end{definition}

\begin{figure}[hbt!]
\centering
\begin{tabular}{|c|c|c|c|} \hline
       15  &4  &\textcolor{red}{7}  &\textcolor{red}{12}  \\  \hline
        2  &13   &\textcolor{red}{1}   &\textcolor{red}{5}  \\  \hline
       16  &3   &\textcolor{red}{8}  &\textcolor{red}{11}  \\  \hline
        1  &14  &\textcolor{red}{9}   &\textcolor{red}{6}  \\  \hline
    \end{tabular}
\hspace{0.15in}\begin{tabular}{|c|c|c|c|} \hline
       14  &4  &\textcolor{red}{6}  &\textcolor{red}{12}  \\  \hline
        3  &13   &\textcolor{red}{11}   &\textcolor{red}{5}  \\  \hline
       16  &2   &\textcolor{red}{8}  &\textcolor{red}{10}  \\  \hline
        1  &15  &\textcolor{red}{9} &  \textcolor{red}{7}  \\  \hline
    \end{tabular}
\hspace{0.15in}\begin{tabular}{|c|c|c|c|} \hline
       12  &6  &\textcolor{red}{4}  &\textcolor{red}{14}  \\  \hline
        5  &11   &\textcolor{red}{13}   &\textcolor{red}{3}  \\  \hline
       16  &2   &\textcolor{red}{8}  &\textcolor{red}{10}  \\  \hline
        1  &15  &\textcolor{red}{9}   &\textcolor{red}{7}  \\  \hline
    \end{tabular}
\hspace{2in}\\
\hspace{2in}\\
\hspace{2in}\\
\hspace{0.15in}  \begin{tabular}{|c|c|c|c|} \hline
       8  &10  &\textcolor{red}{4}  &\textcolor{red}{14}  \\  \hline
        9  &7   &\textcolor{red}{13}   &\textcolor{red}{3}  \\  \hline
       16  &2   &\textcolor{red}{12}  &\textcolor{red}{6}  \\  \hline
        1  &15  &\textcolor{red}{5}   &\textcolor{red}{11}  \\  \hline
    \end{tabular}
\caption{The four pseudo-standard  vertically pairwise 
balanced $C_4$-face-magic labelings 
for $a_2=18, 19, 21, 25$, 
respectively.}\label{pseudostandardlabeling12}   
\end{figure}

\begin{theorem}\label{dksvertical}
 Each of the 8 fundamentally standard and 4 pseudo-standard vertically pairwise balanced 
$C_4$-face-magic labelings on $\mathcal{K}_{4,4}$ can be expressed in terms of
$d_1,\, d_2,\, d_3,$ and $d_4$ as displayed in Figure \ref{fig:SchemeTwo} where the values of $d_k$\ are:
\begin{align*}
    d_1 &\in \{ 1,\, 2,\, 4,\, 8\}  && d_2 \in \{ 1,\, 2,\, 4,\, 8\}\setminus \{d_1\} \\
    d_3 &= \min\big(\{ 1,\, 2,\, 4,\, 8\}\setminus \{d_1,\, d_2\}\big)
    && d_4 = \max\big(\{ 1,\, 2,\, 4,\, 8\}\setminus \{d_1,\, d_2\}\big)
\end{align*}
\end{theorem}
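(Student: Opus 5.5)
The plan is to follow the proof of Theorem \ref{binaryhd_ks}. First I define $d_1,d_2,d_3,d_4$ as differences of certain labels from $x_{1,1}=1$. Then I check with a short table over the twelve labelings (the eight fundamentally standard ones in Figures \ref{fig:StandVertLabelsA2=18}--\ref{fig:StandVertLabelsA2=25} and the four pseudo-standard ones in Figure \ref{pseudostandardlabeling12}) that these $d_k$ have the stated ranges. Finally, I derive the other labels algebraically from the vertical balance and the edge-sum structure of Proposition \ref{prop:extendedgesums}.

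The relations available in every case are these. Since $a_1=17$, we have $x_{1,2}=16$. Vertical pairwise balance gives $x_{i,2j-1}+x_{i,2j}=17$. Proposition \ref{prop:extendedgesums} with $a_3=S-a_2$ gives $x_{2i-1,2}+x_{2i-1,3}=a_2$ and $x_{2i,1}+x_{2i,4}=a_2$, together with $x_{2i,2}+x_{2i,3}=S-a_2$ and $x_{2i-1,4}+x_{2i-1,1}=S-a_2$.

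The first parameter comes from $a_2$. We have $x_{1,3}=a_2-16$, and $a_2\in\{18,19,21,25\}$ by Proposition \ref{prop:PermissibleA1Values}, so $x_{1,3}\in\{2,3,5,9\}$. Hence $x_{1,3}-1\in\{1,2,4,8\}$, and this difference is one of the $d_k$, matching the corresponding cell of Figure \ref{fig:SchemeTwo}. The second free parameter is the label of the other set placed in row $1$ among columns $1$--$4$ (the row-1 entries of the third column in the checkerboard), minus $1$. Its value is determined by which pair set $A_3,A_4$, $A_5,A_6$ or $A_7,A_8$ goes into column $3$, giving three choices for each $a_2$. I would tabulate all twelve cases, as in Figures \ref{confirmd_1}--\ref{confirmd_3}, to confirm three things. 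The two free labels are $1+d_1$ and $1+d_2$ with $d_1\ne d_2$ in $\{1,2,4,8\}$. The next free label has the form $1+d_3$ with $d_3=\min$ of the remaining two values. The remaining free label is $1+d_4$ with $d_4$ the $\max$.

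Once the base labels $x_{1,1}=1$, $x_{1,2}=16$ and the four labels $1+d_k$ are placed, I would propagate in order. The key identity is $1+d_1+d_2+d_3+d_4=16$. Using it, each complement $17-(1+d_k)$ becomes $1+\sum_{\ell\ne k}d_\ell$ by vertical balance. The edge sums $a_2=17+d_{(\cdot)}$ and $S-a_2$ then fix the entries of the rows not yet filled, each being $16$ minus a partial sum of $d$'s. Each such entry is rewritten as $1$ plus the complementary $d$'s, exactly as in the computation of $x_{3,3}$ in the proof of Theorem \ref{binaryhd_ks}.

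The main obstacle is choosing the definitions of $d_1,\dots,d_4$ so that a single scheme fits all twelve labelings simultaneously. This is why the third labeling in each figure had to be replaced by its pseudo-standard version via $\widehat{V}_3\widehat{V}_4$: in the standard form, the orientation of columns $3$ and $4$ breaks the pattern. I would check first, on the $a_2=18$ case (the three labelings, with the pseudo-standard one in place of the third), that the proposed positions of $1+d_3$ and $1+d_4$ respect the $\min$/$\max$ rule. If they do not, I would swap which cell is assigned $d_3$ and which $d_4$ before running the general algebra.
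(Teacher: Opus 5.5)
Your proposal is correct and follows the paper's proof essentially step for step. The paper also sets $d_1=a_2-17=x_{1,3}-1$ and $d_2=x_{3,1}-1$, and checks by a table over the eight fundamentally standard and four pseudo-standard labelings that $x_{2,2}=1+d_3$ and $x_{4,3}=1+d_4$. So no swapping of the $d_3$ and $d_4$ cells is ever needed: the positions fixed in Figure~\ref{fig:SchemeTwo} already obey the min/max rule. The paper then fills in the remaining entries using $1+d_1+d_2+d_3+d_4=16$ and vertical balance. The only cosmetic difference is in the remaining non-complement entries such as $x_{2,3}$, $x_{3,3}$ and $x_{4,2}$: the paper gets them from face sums $S=34$, while you use the edge sums $a_2=17+d_1$ and $S-a_2$ from Proposition~\ref{prop:extendedgesums}, which is equivalent and gives the same expressions.
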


\begin{figure}[hbt!]
\centering

\renewcommand{\arraystretch}{2}
\begin{tabular}{|c|c|c|c|}\hline
$\textcolor{red}{1+d_2+d_3+d_4}$ & $\textcolor{blue}{1+d_1+d_3}$ & $\textcolor{red}{1+d_3+d_4}$ & $\textcolor{blue}{1+d_1+d_2+d_3}$ \\ \hline
$\textcolor{blue}{1+d_1}$ & $\textcolor{red}{1+d_2 +d_4}$ & $\textcolor{blue}{1+d_1+d_2}$ & $\textcolor{red}{1+d_4}$\\  \hline
$\textcolor{red}{16}$ & $\textcolor{blue}{1+d_3}$ & $\textcolor{red}{1+d_1+d_3+d_4}$ & $\textcolor{blue}{1+d_2+d_3}$\\ \hline
$\textcolor{blue}{1}$ &  $\textcolor{red}{1+d_1+d_2+d_4}$ &  $\textcolor{blue}{1+d_2}$  & $\textcolor{red}{1+d_1+d_4}$\\ \hline
\end{tabular}

\caption{Twelve vertically pairwise balanced 
vertical-cycle-nonequivalent
$C_4$-face-magic labelings on $\mathcal{K}_{4,4}$.
} 
\label{fig:SchemeTwo}
\end{figure}

\begin{proof}
  As in the proof of Theorem \ref{binaryhd_ks}, we will define $d_1, d_2, d_3$\ and $d_4$,
  and then verify that the labelings in this theorem have the expressions indicated in Figure \ref{fig:SchemeTwo}. 
  By Proposition \ref{prop:extendedgesums}, we know that $x_{1,2}+x_{1,3}=a_2$. 
  Since $x_{1,2}=16$ we obtain that $x_{1,3}=1+(a_2-17)$, so we will define
  \begin{equation*}
      d_1=a_2-17
  \end{equation*}
so that 
\begin{equation}
    x_{1,3}=1+d_1.   \label{defined1}
\end{equation}
The table in Figure \ref{d1verticalbalance} confirms that (\ref{defined1}) holds for  the fundamentally standard labelings in each 
of Figures \ref{fig:StandVertLabelsA2=18}, \ref{fig:StandVertLabelsA2=19}, \ref{fig:StandVertLabelsA2=21} 
and \ref{fig:StandVertLabelsA2=25} as well as the 4 pseudo-standard labelings in Figure \ref{pseudostandardlabeling12}. 
Also, we confirm that $d_1\in\{1,2,4,8\}$ in Figure \ref{d1verticalbalance}.
\begin{figure}
\centering
\renewcommand{\arraystretch}{1.5}
\begin{tabular}{|c|c|c|} \hline
       $a_2$  & $d_1$ & $x_{1,3}$ \\  \hline
        18  & 1 & 2  \\  \hline
       19  &  2 & 3 \\  \hline
        21  & 4 & 5 \\  \hline
        25 & 8 & 9 \\ \hline
    \end{tabular}
\caption{Confirming that $x_{1,3}=1+d_1$}\label{d1verticalbalance} 
\end{figure} 
To define $d_2$ we will set 
\begin{equation*}
d_2=x_{3,1}-1
\end{equation*}
so that $x_{3,1}=1+d_2$ as in Figure \ref{fig:SchemeTwo}. 

\begin{figure}[hbt!]
\centering
\renewcommand{\arraystretch}{1.5}
\begin{tabular}{|c|c|c|c|}\hline
  $a_2$ &  $x_{3,1}$ & $d_1$ & $d_2$ \\  \hline  
   18   &   3  &  1  & 2 \\ \hline
   18  &    5  &  1  & 4 \\  \hline
   18  &    9  &  1  & 8  \\ \hline 
\end{tabular}
\hspace{0.15in}
\renewcommand{\arraystretch}{1.5}
\begin{tabular}{|c|c|c|c|}\hline
  $a_2$ &  $x_{3,1}$ & $d_1$ & $d_2$ \\  \hline  
   19   &   2  &  2  & 1 \\ \hline
   19  &    5  &  2  & 4 \\  \hline
   19  &    9  &  2  & 8  \\ \hline 
\end{tabular}
\hspace{0.15in}
\renewcommand{\arraystretch}{1.5}
\begin{tabular}{|c|c|c|c|}\hline
  $a_2$ &  $x_{3,1}$ & $d_1$ & $d_2$ \\  \hline  
   21   &   2  &  4  & 1 \\ \hline
   21  &    3  &  4  & 2 \\  \hline
   21  &    9  &  4  & 8  \\ \hline 
\end{tabular}
\hspace{2in}\\
\hspace{2in}\\
\hspace{2in}\\
\begin{tabular}{|c|c|c|c|}\hline
  $a_2$ &  $x_{3,1}$ & $d_1$ & $d_2$ \\  \hline  
   25   &   2  &  8  & 1 \\ \hline
   25  &    3  &  8  & 2 \\  \hline
   25  &    5  &  8  & 4  \\ \hline 
\end{tabular}
\caption{Confirming the values for $d_2$.}\label{confirmingd2vertbal}
\end{figure}
As shown in Figure \ref{confirmingd2vertbal}, $d_2\in\{ 1,\, 2,\, 4,\, 8\}\setminus \{d_1\}$. 
We now define $d_3$ and $d_4$ by
\begin{equation*}
  d_3= \min\big(\{ 1,\, 2,\, 4,\, 8\}\setminus \{d_1,\, d_2\}\big)\hspace{.1in}\text{and}\hspace{.1in}d_4= \max\big(\{ 1,\, 2,\, 4,\, 8\}\setminus \{d_1,\, d_2\}\big).   
\end{equation*}
The table in Figure \ref{confirmd_3v}  verifies that $x_{2,2}=1+d_3$ and $x_{4,3}=1+d_4$ hold for the 8 fundamentally standard labelings 
in each of Figures \ref{fig:StandVertLabelsA2=18}, \ref{fig:StandVertLabelsA2=19}, \ref{fig:StandVertLabelsA2=21} and 
\ref{fig:StandVertLabelsA2=25} as well as the 4 pseudo-standard labelings in Figure \ref{pseudostandardlabeling12}.

\begin{figure}[hbt!]
\centering
\renewcommand{\arraystretch}{1.5}
\begin{tabular}{|c|c|c|c|c|c|c|c|} \hline
  $a_1$ & $a_2$  & $d_1$ & $d_2$ & $\textcolor{blue}{d_3}$ & $\textcolor{red}{d_4}$ & $x_{2,2}$  & $x_{4,3}$\\  \hline
    17  &  18  & 1 & 2 & \textcolor{blue}{4} & \textcolor{red}{8} & 5  & 9 \\  \hline
     17 &  18  &  1 & 4 & \textcolor{blue}{2} & \textcolor{red}{8} & 3 & 9 \\  \hline
     17 &    18  & 1 & 8 & \textcolor{blue}{2} & \textcolor{red}{4} & 3 & 5  \\  \hline
     17  &  19  & 2 & 1 & \textcolor{blue}{4} & \textcolor{red}{8} & 5  & 9 \\  \hline
     17 &  19  &  2 & 4 & \textcolor{blue}{1} & \textcolor{red}{8} & 2 & 9 \\  \hline
     17 &    19  & 2 & 8 & \textcolor{blue}{1} & \textcolor{red}{4} & 2 & 5  \\  \hline
    \end{tabular}\hspace{0.15in}
\begin{tabular}{|c|c|c|c|c|c|c|c|} \hline
  $a_1$ & $a_2$  & $d_1$ & $d_2$ & $\textcolor{blue}{d_3}$ & $\textcolor{red}{d_4}$ & $x_{2,2}$  & $x_{4,3}$\\  \hline
    17  &  21  & 4 & 1 & \textcolor{blue}{2} & \textcolor{red}{8} & 3  & 9 \\  \hline
     17 &  21  &  4 & 2 & \textcolor{blue}{1} & \textcolor{red}{8} & 2 & 9 \\  \hline
     17 &   21  & 4 & 8 & \textcolor{blue}{1} & \textcolor{red}{2} & 2 & 3  \\  \hline
     17  &  25  & 8 & 1 & \textcolor{blue}{2} & \textcolor{red}{4} & 3  & 5 \\  \hline
     17 &  25  &  8 & 2 & \textcolor{blue}{1} & \textcolor{red}{4} & 2 & 5 \\  \hline
     17 &    25  & 8 & 4 & \textcolor{blue}{1} & \textcolor{red}{2} & 2 & 3  \\  \hline
    \end{tabular}
\caption{Confirming that $x_{2,2}=1+d_3$ and $x_{4,3}=1+d_4$.}\label{confirmd_3v}    
\end{figure}

We will now use the expressions for $x_{1.3}$, $x_{3,1}$ and $x_{2,2}$, 
and the known values of $x_{1,1}=1$ and $x_{1,2}=16$, 
to derive expressions for
the remaining labels for these 12 labelings. 
Recall that $\textcolor{red}{1+d_1+d_2+d_3+d_4}=\textcolor{red}{16}$. 
By the vertically pairwise balanced property, 
$x_{2,1} +x_{2,2}= 17$, so that
\begin{equation}
  x_{2,1}=17-x_{2,2}=17-(1+d_3)=\textcolor{red}{16}-d_3=1+d_1+d_2+d_4.  \label{repwds1}
\end{equation}
By computations similar to (\ref{repwds1}) we obtain that $x_{3,2}=1+d_1+d_3+d_4$,\\ 
$x_{1,4}=1+d_2+d_3+d_4$, and $x_{4,4}=1+d_1+d_2+d_3$.
Since $S=34$, we have $x_{2,3}+x_{2,2}+x_{1,3}+\textcolor{blue}{x_{1,2}}=34$; 
so, by rewriting $34-\textcolor{blue}{16}=2+\textcolor{red}{16}$, we have
\begin{equation*}
  x_{2,3}=34-\textcolor{blue}{16}-x_{1,3}-x_{2,2}=2+\textcolor{red}{16}-(1+d_1)-(1+d_3)=1+d_2+d_4.  
\end{equation*}
By rewriting $S=34=2\cdot\textcolor{red}{16}+2$ we have 
\begin{equation}\label{repwds2}
\begin{aligned}
 x_{3,3}&=34-x_{3,2}-x_{2,3}-x_{2,2}\\
    &=(2\cdot\textcolor{red}{16}+2)-(1+d_1+d_2+d_4)-(1+d_2+d_4)-(1+d_3)\\
    &=1+d_1+d_2,
 \end{aligned}
\end{equation}
and in a manner similar to (\ref{repwds2}) we obtain $x_{4,2}=1+d_2+d_3$.
Finally, by computations similar to those in (\ref{repwds1}) we obtain
$ x_{2,4}=17-x_{2,3}=1+d_1+d_3$, $x_{3,4}=17-x_{3,3}=1+d_3+d_4$, and
$x_{4,1}=17-x_{4,2}=1+d_1+d_4$.
\end{proof}

\begin{theorem}\label{Numberoflabels}
In all, there are \rm 192\it\ distinct $C_4$-face-magic labelings of $\mathcal{K}_{4,4}$ up to symmetries of the Klein bottle.
\end{theorem}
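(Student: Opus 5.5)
The count will be assembled from the two reductions already established. By Proposition \ref{prop:LabelEquiv}, every $C_4$-face-magic labeling on $\mathcal{K}_{4,4}$ is Klein bottle labeling equivalent to a unique properly symmetrized labeling. Hence the number of labelings up to symmetries of the Klein bottle equals the number of properly symmetrized labelings.

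By Proposition \ref{prop:StandardLabel}, the properly symmetrized labelings are partitioned into vertical cycle equivalence classes, and each class contains exactly one standard labeling. So the count equals (number of standard labelings) $\times$ (size of each class). I will then count each factor in turn.

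For the class size, I will argue that the group generated by $\widehat{V}_2,\widehat{V}_3,\widehat{V}_4$ is $(\mathbb{Z}/2)^3$, since the three operations commute and each is an involution. I will also show that this group acts freely on properly symmetrized labelings. Indeed, $\widehat{V}_k$ reverses the $k$-th vertical cycle via $j\mapsto 5-j$, and this changes the labeling because $x_{k,1}\neq x_{k,4}$ (labels are distinct). So any nonidentity product changes some column-$k$ entry and cannot fix $X$. Lemma \ref{lem:VerticalCycleOps} guarantees that the images are again properly symmetrized, and that these eight labelings are not identified with one another by any symmetry in $\mathit{KBLS}(4)$. Thus every class has exactly $2^3=8$ elements, and the eight sign patterns on $(y_{2,1}\gtrless y_{2,4},\,y_{3,1}\gtrless y_{3,4},\,y_{4,1}\gtrless y_{4,4})$ are realized exactly once, consistent with uniqueness of the standard representative.

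For the number of standard labelings, Proposition \ref{prop:PermissibleA1Values} restricts $a_1$ to $\{9,13,15,16,17\}$. Propositions \ref{propHorizontalPairwiseBalancedCondition_A1=9}, \ref{propHorizontalPairwiseBalancedCondition_A1=13}, \ref{propHorizontalPairwiseBalancedCondition_A1=15} and \ref{propHorizontalPairwiseBalancedCondition_A1=1} give exactly $3$ standard labelings for each of $a_1=9,13,15,16$, which is $12$ horizontally pairwise balanced ones. Proposition \ref{propVerticalPairwiseBalancedCondition_A1=17} gives $12$ vertically pairwise balanced ones for $a_1=17$. These $24$ are pairwise distinct, since labelings with different $a_1$ or different $a_2$ differ in $x_{1,2}$ or $x_{1,3}$. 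Within a fixed $(a_1,a_2)$ they differ in $x_{3,1}$, which can be read off the tables in Theorems \ref{binaryhd_ks} and \ref{dksvertical}. The total is therefore $24\cdot 8=192$.

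The main obstacle is the free-action/class-size step. It requires that a standard labeling never coincides with a nontrivial vertical-cycle image of itself, and that no two of the eight images are Klein-bottle equivalent. Otherwise Proposition \ref{prop:LabelEquiv}'s uniqueness would collapse some classes. I will handle the first point by the distinctness of labels as above, and the second by the last paragraph of Lemma \ref{lem:VerticalCycleOps} together with the uniqueness in Proposition \ref{prop:LabelEquiv}.
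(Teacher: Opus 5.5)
Your proposal is correct and is essentially the paper's proof: you count properly symmetrized labelings as (number of standard labelings) $\times$ (size of a vertical-cycle class) $=24\cdot 8=192$. Your free-action argument (a nontrivial product of the $\widehat{V}_k$ swaps $x_{k,1}\neq x_{k,4}$), combined with the uniqueness in Proposition \ref{prop:LabelEquiv}, is a cleaner justification of the paper's claim that the eight images are distinct and pairwise inequivalent. One minor citation slip: for $a_1=17$ the $x_{3,1}$ values in the table of Theorem \ref{dksvertical} come partly from the pseudo-standard rather than the standard labelings, so read off distinctness of the three standard labelings for each $a_2$ directly from Figures \ref{fig:StandVertLabelsA2=18}--\ref{fig:StandVertLabelsA2=25}, where $x_{3,1}$ takes three different values.
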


\begin{proof}
    By Lemma \ref{lem:VerticalCycleOps}, an elementary vertical cycle operation $\widehat{V}_k$, for $k=2,3,4$, 
    preserves properly symmetrized labelings.
    Furthermore, applying $\widehat{V}_k$ will not produce a labeling that is Klein bottle 
    labeling equivalent to one of the standard labelings in Figures \ref{fig:StandHorzLabelsA1=9}, \ref{fig:StandHorzLabelsA1=13}, \ref{fig:StandHorzLabelsA1=15},
    \ref{fig:StandHorzLabelsA1=16}, \ref{fig:StandVertLabelsA2=18}, \ref{fig:StandVertLabelsA2=19}, \ref{fig:StandVertLabelsA2=21} and \ref{fig:StandVertLabelsA2=25}.
   Therefore, the application of $\widehat{V}_2^i\ \widehat{V}_3^j\ \widehat{V}_4^k$,
   for $i,j,k\in\{0,1\}$, to each standard labelings produces a distinct properly symmetrized labeling.

   By Theorem \ref{binaryhd_ks}, there are $4$ choices of $d_1$ and $3$ choices of $d_2$.
   Observe that $d_3$ and $d_4$ are determined from $d_1$ and $d_2$. 
   Thus, there are $4\cdot 3=12$ distinct standard horizontally pairwise balanced labelings on $\mathcal{K}_{4,4}$.  
   Each standard horizontally pairwise balanced $C_4$-face-magic labelimg is
   vertical cycle equivalent to $2\cdot2\cdot 2=8$ distinct 
   properly symmetrized labelings. 
   Consequently, there are $12\cdot 8=96$ possible properly symmetrized 
   $C_4$-face-magic labelings that arise from Figure \ref{fig:SchemeOne}.

     By applying the same counting methods to Theorem \ref{dksvertical}, 
     there are $12\cdot 8=96$ properly symmetrized vertically pairwise 
     balanced $C_4$-face-magic labelings that arise from Figure \ref{fig:SchemeTwo}.
   
    In total, there are $96+96=192$ distinct $C_4$-face-magic labelings of $\mathcal{K}_{4,4}$ up to symmetries of the Klein bottle.
\end{proof}

\section{Open Problems and Concluding Remarks}

The main results in this paper, Theorems \ref{binaryhd_ks}, \ref{dksvertical} and \ref{Numberoflabels}  are a consequence of three constraints imposed on a $C_4$-face-magic
labeling on $\mathcal{K}_{4,4}$.
The first constraint arose from the symmetries of the Klein bottle
that induced a graph isomorphism on $\mathcal{K}_{4,4}$ which led
to the concept of a properly symmetrize labeling on $\mathcal{K}_{4,4}$.
The second constraint emerged from the vertical cycle equivalence relation on
properly symmetrized $C_4$-face-magic labelings which led to the concept of a standard labeling on $\mathcal{K}_{4,4}$.
The third constraint was imposed by the edge sum invariant structure of a $C_4$-face-magic
labeling on $\mathcal{K}_{4,4}$.

The results on $C_4$-face-magic labelings on $\mathcal{K}_{4,4}$ in this paper suggests some open problems for future research.
First, we consider $C_4$-face-magic labelings on an $n\times n$ Klein bottle grid graph.

\begin{problem}
    Characterize the $C_4$-face-magic labelings
    on  $\mathcal{K}_{n,n}$ when $n$ is even.
\end{problem}

Once this problem is solved, one can consider the following more general problem.


\begin{problem}
    Characterize the $C_4$-face-magic labelings
    on  $\mathcal{K}_{m,n}$ when $m$ and $n$ are even.
\end{problem}

Clearly, the most challenging step would be to determine the permissible values for the edge sums for $\mathcal{K}_{m,n}$. Perhaps a general method for doing so could be developed with the aid of technology.

\section{Acknowledgements}
The second author was supported by a College Research Council Grant
provided by the University of Pittsburgh at Johnstown
during the research and preparation of this manuscript.

\end{document}